\newtheorem{theorem}{\textbf{Theorem}}[section]
\newtheorem{proposition}[theorem]{\textbf{Proposition}}
\newtheorem{corollary}[theorem]{\textbf{Corollary}}
\newtheorem{lemma}[theorem]{\textbf{Lemma}}
\newtheorem{circling}[theorem]{\textbf{Circling Rule}}
\theoremstyle{definition}
\newtheorem{remark}[theorem]{\textbf{Remark}}
\numberwithin{equation}{section}
\newcommand{\red}{\color{red}}
\newcommand{\blue}{\color{blue}}
\definecolor{darkgreen}{RGB}{0,180,0}
\colorlet[named]{green}{darkgreen}
\newcommand{\wt}{\operatorname{wt}}
\newcommand{\GL}{\operatorname{GL}}
\newcommand{\GTP}{\operatorname{GTP}}
\newcommand{\bzl}{\operatorname{string}}
\newcommand{\Pind}{\Pi_{\operatorname{nd}}}
\newcommand{\key}{\operatorname{\bold{key}}}
\begin{document}
\title{Colored five-vertex models and Demazure Atoms}
\author{Ben Brubaker}
\address{School of Mathematics, University of Minnesota, Minneapolis, MN 55455}
\email{brubaker@math.umn.edu}
\author{Valentin Buciumas}
\address{School of Mathematics and Physics, 
The University of Queensland, 
St. Lucia, QLD, 4072, 
Australia}
\email{valentin.buciumas@gmail.com}
\author{Daniel Bump}
\address{Department of Mathematics, Stanford University, Stanford, CA 94305-2125}
\email{bump@math.stanford.edu}
\author{Henrik P. A. Gustafsson}
\address{\hspace{-\parindent}\textnormal{Until September 15, 2019:} \newline
  \indent Department of Mathematics, Stanford University, Stanford, CA 94305-2125. \newline 
  \textnormal{Since September 16, 2019:} \newline
  \indent School of Mathematics, Institute for Advanced Study, Princeton, NJ~08540. \newline 
  \indent Department of Mathematics, Rutgers University, Piscataway, NJ~08854. \newline 
  \indent Department of Mathematical Sciences, University of Gothenburg and Chalmers University of Technology, SE-412~96 Gothenburg, Sweden.}
\email{gustafsson@ias.edu}
\maketitle

\newcommand{\gammaice}[4]{\begin{tikzpicture}
\coordinate (a) at (-.75, 0);
\coordinate (b) at (0, .75);
\coordinate (c) at (.75, 0);
\coordinate (d) at (0, -.75);
\coordinate (aa) at (-.75,.5);
\coordinate (cc) at (.75,.5);
\draw (a)--(c);
\draw (b)--(d);
\draw[fill=white] (a) circle (.25);
\draw[fill=white] (b) circle (.25);
\draw[fill=white] (c) circle (.25);
\draw[fill=white] (d) circle (.25);
\node at (0,1) { };
\node at (a) {$#1$};
\node at (b) {$#2$};
\node at (c) {$#3$};
\node at (d) {$#4$};
\path[fill=white] (0,0) circle (.2);
\node at (0,0) {$z_i$};
\end{tikzpicture}}

\newcommand{\gammaa}{\begin{tikzpicture}
\coordinate (a) at (-.75, 0);
\coordinate (b) at (0, .75);
\coordinate (c) at (.75, 0);
\coordinate (d) at (0, -.75);
\coordinate (aa) at (-.75,.5);
\coordinate (cc) at (.75,.5);
\draw (a)--(c);
\draw (b)--(d);
\draw[fill=white] (a) circle (.25);
\draw[fill=white] (b) circle (.25);
\draw[fill=white] (c) circle (.25);
\draw[fill=white] (d) circle (.25);
\node at (0,1) { };
\node at (a) {$+$};
\node at (b) {$+$};
\node at (c) {$+$};
\node at (d) {$+$};
\path[fill=white] (0,0) circle (.2);
\node at (0,0) {$z_i$};
\end{tikzpicture}}

\newcommand{\gammaA}[8]{\begin{tikzpicture}
\coordinate (a) at (-.75, 0);
\coordinate (b) at (0, .75);
\coordinate (c) at (.75, 0);
\coordinate (d) at (0, -.75);
\coordinate (aa) at (-.75,.5);
\coordinate (cc) at (.75,.5);
\draw[line width=0.5mm, #1] (a)--(0,0);
\draw[line width=0.6mm, #2] (b)--(0,0);
\draw[line width=0.5mm, #3] (c)--(0,0);
\draw[line width=0.6mm, #4] (d)--(0,0);
\draw[line width=0.5mm, #1,fill=white] (a) circle (.25);
\draw[line width=0.5mm, #2,fill=white] (b) circle (.25);
\draw[line width=0.5mm, #3,fill=white] (c) circle (.25);
\draw[line width=0.5mm, #4,fill=white] (d) circle (.25);
\node at (0,1) { };
\node at (a) {$#5$};
\node at (b) {$#6$};
\node at (c) {$#7$};
\node at (d) {$#8$};
\path[fill=white] (0,0) circle (.2);
\node at (0,0) {$z_i$};
\end{tikzpicture}}

\newcommand{\gammaAA}[4]{\begin{tikzpicture}
\coordinate (a) at (-.75, 0);
\coordinate (b) at (0, .75);
\coordinate (c) at (.75, 0);
\coordinate (d) at (0, -.75);
\coordinate (aa) at (-.75,.5);
\coordinate (cc) at (.75,.5);
\draw (a)--(c);
\draw (b)--(d);
\draw[line width=0.5mm, #1,fill=white] (a) circle (.25);
\draw[line width=0.5mm, #2,fill=white] (b) circle (.25);
\draw[line width=0.5mm, #2,fill=white] (c) circle (.25);
\draw[line width=0.5mm, #1,fill=white] (d) circle (.25);
\node at (0,1) { };
\node at (a) {$#3$};
\node at (b) {$#4$};
\node at (c) {$#4$};
\node at (d) {$#3$};
\path[fill=white] (0,0) circle (.2);
\node at (0,0) {$z_i$};
\draw [line width=0.5mm,#1] (-.5,0) to [out=0,in=90] (0,-.5);
\draw [line width=0.5mm,#2] (0,.5) to [out=-90,in=180] (.5,0);
\end{tikzpicture}}

\newcommand{\gammab}[2]{\begin{tikzpicture}
\coordinate (a) at (-.75, 0);
\coordinate (b) at (0, .75);
\coordinate (c) at (.75, 0);
\coordinate (d) at (0, -.75);
\coordinate (aa) at (-.75,.5);
\coordinate (cc) at (.75,.5);
\draw (a)--(c);
\draw [line width=0.6mm, #1] (b)--(d);
\draw[fill=white] (a) circle (.25);
\draw[line width=0.5mm, #1, fill=white] (b) circle (.25);
\draw[fill=white] (c) circle (.25);
\draw[line width=0.5mm, #1, fill=white] (d) circle (.25);
\node at (0,1) { };
\node at (a) {$+$};
\node at (b) {$#2$};
\node at (c) {$+$};
\node at (d) {$#2$};
\path[fill=white] (0,0) circle (.2);
\node at (0,0) {$z_i$};
\end{tikzpicture}}

\newcommand{\gammaB}[2]{\begin{tikzpicture}
\coordinate (a) at (-.75, 0);
\coordinate (b) at (0, .75);
\coordinate (c) at (.75, 0);
\coordinate (d) at (0, -.75);
\coordinate (aa) at (-.75,.5);
\coordinate (cc) at (.75,.5);
\draw [line width=0.5mm,#1] (a)--(c);
\draw (b)--(d);
\draw[line width=0.5mm, #1, fill=white] (a) circle (.25);
\draw[fill=white] (b) circle (.25);
\draw[line width=0.5mm, #1, fill=white] (c) circle (.25);
\draw[fill=white] (d) circle (.25);
\node at (0,1) { };
\node at (a) {$#2$};
\node at (b) {$+$};
\node at (c) {$#2$};
\node at (d) {$+$};
\path[fill=white] (0,0) circle (.2);
\node at (0,0) {$z_i$};
\end{tikzpicture}}

\newcommand{\gammac}[2]{\begin{tikzpicture}
\coordinate (a) at (-.75, 0);
\coordinate (b) at (0, .75);
\coordinate (c) at (.75, 0);
\coordinate (d) at (0, -.75);
\coordinate (aa) at (-.75,.5);
\coordinate (cc) at (.75,.5);
\draw (a)--(c);
\draw (b)--(d);
\draw[line width=0.5mm, #1, fill=white] (a) circle (.25);
\draw[fill=white] (b) circle (.25);
\draw[fill=white] (c) circle (.25);
\draw[line width=0.5mm, #1, fill=white] (d) circle (.25);
\node at (0,1) { };
\node at (a) {$#2$};
\node at (b) {$+$};
\node at (c) {$+$};
\node at (d) {$#2$};
\path[fill=white] (0,0) circle (.2);
\node at (0,0) {$z_i$};
\draw [line width=0.5mm,#1] (-.5,0) to [out=0,in=90] (0,-.5);
\end{tikzpicture}}

\newcommand{\gammaC}[2]{\begin{tikzpicture}
\coordinate (a) at (-.75, 0);
\coordinate (b) at (0, .75);
\coordinate (c) at (.75, 0);
\coordinate (d) at (0, -.75);
\coordinate (aa) at (-.75,.5);
\coordinate (cc) at (.75,.5);
\draw (a)--(c);
\draw (b)--(d);
\draw[fill=white] (a) circle (.25);
\draw[line width=0.5mm, #1, fill=white] (b) circle (.25);
\draw[line width=0.5mm, #1, fill=white] (c) circle (.25);
\draw[fill=white] (d) circle (.25);
\node at (0,1) { };
\node at (a) {$+$};
\node at (b) {$#2$};
\node at (c) {$#2$};
\node at (d) {$+$};
\path[fill=white] (0,0) circle (.2);
\node at (0,0) {$z_i$};
\draw [line width=0.5mm,#1] (0,.5) to [out=-90,in=180] (.5,0);
\end{tikzpicture}}

\begin{abstract}
  Type A Demazure atoms are pieces of Schur functions, or sets of tableaux
  whose weights sum to such functions. Inspired by colored vertex models of
  Borodin and Wheeler, we will construct solvable lattice models whose
  partition functions are Demazure atoms; the proof of this makes use of a
  Yang-Baxter equation for a colored five-vertex model.  As a biproduct, we
  will construct Demazure atoms on Kashiwara's $\mathcal{B}_\infty$ crystal
  and give new algorithms for computing Lascoux-Sch\"utzenberger keys.
\end{abstract}

\section{Introduction}
\label{sec:intro}

Exactly solvable lattice models have found numerous applications in the study of special functions. (See \cite{KorepinBogoliebovIzergin,
KuperbergASM, LascouxSix, LLTFlag, GorbounovKorff, GorbounovKorffStroppel, WheelerZJGrothendieck, KnutsonZGPuzzles} to name but a few.) Here we use 
the Gelfand school interpretation of ``special function,'' meaning one that arises as a matrix coefficient of a group representation. 
If the group is a complex Lie group or a $p$-adic reductive group, these matrix coefficients include highest weight characters and in 
particular, Schur polynomials, as well as Demazure characters and various specializations and limits of Macdonald polynomials. 
Many of these special functions may be studied by methods originating in statistical mechanics, by expressing them as a multivariate 
generating function (the ``partition function'') over the admissible states of a solvable lattice model. The term ``solvable'' means that 
the model possesses a solution of the Yang-Baxter equation that often permits one to express the partition function of the model 
in closed form. Knowing that a special function is expressible as a partition function of a solvable lattice model then leads to a host 
of interesting combinatorial properties, including branching rules, exchange relations under Hecke operators, Pieri- and Cauchy-type 
identities, and functional equations. 

We will concentrate on the five- and six-vertex models on a square lattice, two-dimensional lattice models with five (respectively, six) 
admissible configurations on the edges adjacent to any vertex in the lattice. The latter models are sometimes referred to as ``square ice'' 
models, as the six configurations index the ways in which hydrogen atoms may be placed on two of the four edges adjacent to an 
oxygen atom at each vertex. Then weights for each configuration may be chosen so that the partition function records the probability 
that water molecules are arranged in any given way on the lattice (see for example \cite{Baxter}). More recently, lattice models with different weighting schemes 
have been studied in relation with certain stochastic models like the Asymmetric Simple Exclusion Process (ASEP) or the 
Kardar-Parisi-Zhang (KPZ) stochastic partial differential equation. These were shown to be part of a large family of solvable 
lattice models, called stochastic higher spin six-vertex models in \cite{BorodinAIM, CorwinPetrov}. Solutions to the Yang-Baxter 
equation also arise naturally from R-matrices of quantum groups; these higher spin models were associated to 
R-matrices for $U_q(\widehat{\mathfrak{sl}}_2)$. In this paper, we only make use of the associated quantum groups to differentiate among
the various lattice model weighting schemes and the resulting solutions to the Yang-Baxter equations.

Subsequently, Borodin and Wheeler \cite{BorodinWheelerColored} introduced
generalizations of the above
models, which they call colored lattice models.
Antecedents to these colored models appeared earlier in
\cite{BorodinBufetovWheeler, FodaWheeler}.
(A different notion of ``colored'' models appears in many other
works such as~\cite{AkutsuDeguchiOhtsuki}.)
In~\cite{BorodinWheelerColored}, ``colors'' are additional attributes
introduced to the boundary data and internal edges of a given model,
corresponding to replacing the governing quantum group
$U_q(\widehat{\mathfrak{sl}}_2)$ in the setting mentioned above by
$U_q(\widehat{\mathfrak{sl}}_{r+1})$. The partition functions of their colored
lattice models are non-symmetric \emph{spin} Hall-Littlewood
polynomials. These are functions depending on a parameter $s$, which recover
non-symmetric Hall-Littlewood polynomials when one sets $s=0$.

The idea of introducing ``color'' in this way may be applied to a wide variety of lattice models. If one chooses the Boltzmann 
weights for the colored models appropriately, then one obtains a refinement of the (uncolored) partition function as
a sum of partition functions indexed by all permutations of colors. Moreover, if the resulting colored model is solvable, 
then similar applications to those described above will follow. For example in~\cite{BorodinWheelerColored}, properties 
for these generalizations of Hall-Littlewood polynomials are proved including branching rules, exchange relations under 
Hecke divided-difference operators and Cauchy type identities motivated by the study of multi-species versions of the ASEP.

Inspired by these ideas of Borodin and Wheeler, this paper studies colored
versions of an (uncolored) five-vertex model whose partition function is (up
to a constant) a Schur polynomial $s_\lambda$ indexed by a partition
$\lambda$. The states of the uncolored system are in bijection with the set of semi-standard
Young tableaux of shape $\lambda$, so the above closed form of the partition function is a reformulation of the classical
combinatorial definition of the Schur function.
This uncolored five-vertex model is a degeneration (crystal limit)
of a six-vertex model described in Hamel and King~\cite{HamelKing},
that is similarly equivalent to the generalization of the combinatorial
definition of the Schur function by Tokuyama~\cite{Tokuyama}. These
models were shown to be solvable by Brubaker, Bump and Friedberg~\cite{hkice}.
See Section~\ref{sec:kansas} for the full definition of the uncolored five-vertex model used in this paper.

In Section \ref{sec:oz} we introduce our colored five-vertex model. A color is assigned to each of the $r$ rows of its
rectangular lattice and permuting these colors gives a system for
each element of the symmetric group $S_r$. We find Boltzmann weights for the
colored models that simultaneously refine the uncolored model and produce a
(colored) Yang-Baxter equation associated to a quantum superalgebra (see Theorem~\ref{coloredybethm}). This allows us to evaluate the partition
functions for the colored models for each $w \in S_r$ and prove in
Theorem~\ref{zdematoms} that they are \textit{Demazure atoms}.

Demazure atoms, introduced by Lascoux and
Sch\"{u}tzenberger~\cite{LascouxSchutzenbergerKeys} and referred to as
``standard bases'' there, decompose Demazure characters into their smallest
non-intersecting pieces. So in particular, summing Demazure atoms over a
Bruhat interval produces Demazure characters. Mason~\cite{MasonAtoms} coined
the term ``atoms'' and showed that they are specializations of non-symmetric
Macdonald polynomials of Cartan type~A with $q=t=0$. Basic properties of Demazure
atoms and characters are reviewed in Section~\ref{sec:do}.

Demazure characters and Schur polynomials may be viewed as
polynomial functions in formal variables or as functions on an algebraic torus
associated to a given reductive group. But they may also be lifted to subsets
of the Kashiwara-Nakashima~\cite{KashiwaraNakashima} crystal
$\mathcal{B}_\lambda$ whose elements are semistandard Young tableaux of a
given shape $\lambda$, called \textit{Demazure crystals}.  The existence of
such a lift of Demazure modules to crystals was shown by
Littelmann~\cite{LittelmannYT} and Kashiwara~\cite{KashiwaraDemazure}.
Summing the weights of the Demazure crystal recovers the Demazure
character.

Just as Littelmann and Kashiwara lifted Demazure characters
to the crystal, polynomial Demazure atoms may also be lifted to subsets
of the crystal. We will call these sets \textit{crystal Demazure
atoms}. Summing the weights of the crystal Demazure atom, one obtains the
usual polynomial Demazure atom. Crystals and the refined Demazure
character formula are briefly reviewed in Section~\ref{dcandci}.

Although the theory of Demazure characters and crystals is in place for all
Cartan types, most of the literature concerning Demazure atoms and the related
topic of Lascoux-Sch\"utzenberger keys is for Cartan Type~A. However the
$\overline{B}_{w\lambda}(\lambda)$ in Section~9.1 of
in~\cite{KashiwaraCristallines} are Demazure atoms for an arbitrary Kac-Moody
Cartan Type. Moreover recently~\cite{JaconLecouvey} (using the results
in~\cite{KashiwaraCristallines}) defined keys for all Kac-Moody Cartan types,
with a special emphasis on affine Type~A. There are also Type~C results
in~\cite{Santos}. See \cite{HershLenart,AssafSchilling} for other recent
work on Demazure atoms.

Based on Theorem~\ref{zdematoms}, which shows that the partition functions of our colored models are Demazure atoms, it is natural to ask for a
more refined version of the connection between colored ice and the crystal
Demazure atoms.
In Section~\ref{bijsec}, we accomplish this by exhibiting a bijection between
the admissible states of colored ice and crystal Demazure atoms as a subset of an
associated crystal $\mathcal{B}_\lambda$.  Showing this refined bijection is
much more difficult than the initial evaluation of the partition function. Its
proof forms a major part of this paper and builds on Theorem~\ref{daform},
which gives an algorithmic description of Demazure atoms. This result
is proved in Section~\ref{sec:dafpro} after introducing Kashiwara's
$\mathcal{B}_\infty$ crystal in Section~\ref{sec:binf}. As a biproduct of our
arguments, we will also obtain a theory of Demazure atoms on
$\mathcal{B}_\infty$.  The proofs take input from both the colored ice model
and the Yang-Baxter equation, and from crystal base theory, particularly
Kashiwara's $\star$-involution of~$\mathcal{B}_\infty$.

Another biproduct of the results in Section \ref{bijsec} is a new formula for \textit{Lascoux-Sch\"utzenberger keys}.
These are tableaux with the defining property that each column (except the first) is a subset
of the column before it. What is most important is that each crystal Demazure atom
contains a unique key. Thus if $T\in\mathcal{B}_\lambda$ there is a unique key
$\key(T)$ that is in the same crystal Demazure atom as $T$; this is called
the \textit{right key} of $T$. We will review this theory in
Subsection~\ref{subsec:alg}. Algorithms for computing $\key(T)$ may be found
in
{\cite{LascouxSchutzenbergerKeys,ReinerShimozono,LenartUnified,MasonAtoms,MasonRSK,WillisThesis,ProctorWillis,WillisKey,WillisFilling,AvalKeys,Seaborn}}.
In this paper we give a new algorithm for computing the
Lascoux-Sch\"utzenberger right key of a tableau in a highest weight
crystal. Since this algorithm may be of independent interest we will describe
it (and the topic of Lascoux-Sch\"utzenberger keys) in this introduction, in
Subsection~\ref{subsec:alg} below. We prove the algorithm in
Section \ref{sec:algo}.

This paper also serves as a stepping stone to colored versions of the
six-vertex (or ``ice'' type) models of~\cite{hkice} and of~\cite{BBB}. Indeed, since the
results of this paper, we have shown that analogous colored partition functions recover special values
of Iwahori fixed vectors in Whittaker models for general linear groups over a
$p$-adic field \cite{BBBGIwahori} and their metaplectic covers (in progress), respectively.
The colored five-vertex model in this paper is a degeneration of these models.

\subsection{\label{subsec:alg}Lascoux-Sch\"utzenberger keys}
Type~A Demazure atoms are pieces of Schur functions: if $\lambda$ is a
partition of length $\leqslant r$, the Schur function $s_{\lambda} (z_1,
\cdots, z_r)$ can be decomposed into a sum, over the Weyl group $W = S_r$, of
such atoms. This is an outgrowth of the Demazure character formula: if
$\partial_w$ is the Demazure operator defined later in Section~\ref{sec:do} then
$\partial_w \mathbf{z}^{\lambda}$ is called a \textit{Demazure character}. Originally
these were introduced by Demazure~{\cite{Demazure}} to study the cohomology of
line bundles on flag and Schubert varieties. A variant represents the Demazure
character as $\sum_{y \leqslant w} \partial^\circ_y \mathbf{z}^{\lambda}$
where $\partial^\circ_y$ are modified operators, and $y \leqslant w$ is
the Bruhat order. The components $\partial^\circ_y \mathbf{z}^{\lambda}$
are called (polynomial) {\textit{Demazure atoms}}.

As we will explain in Section~\ref{sec:oz}, a state of the colored lattice
model features $r$ colored lines running through a grid moving downward and rightward. These can cross, but
they are allowed to cross at most once. Each line intersects the boundary of
the grid in two places, and the colors are permuted depending on which lines
cross. Hence they determine a permutation $w$ from this braiding, which can be
encoded into the boundary conditions.  This allows us to construct a system
$\mathfrak{S}_{\mathbf{z}, \lambda, w}$ whose partition function satisfies the identity
\begin{equation}
  \label{partdc} Z (\mathfrak{S}_{\mathbf{z}, \lambda, w})
  =\mathbf{z}^{\rho} \partial^\circ_w \mathbf{z}^{\lambda},
\end{equation}
where $\rho$ is the Weyl vector. Here the polynomial
$\partial^\circ_w \mathbf{z}^{\lambda}$ is the Demazure atom.

The Schur function $s_{\lambda}$ is the character of the
Kashiwara-Nakashima~{\cite{KashiwaraNakashima}} crystal $\mathcal{B}_{\lambda}$ of tableaux. The Demazure character formula was lifted by
Littelmann~{\cite{LittelmannYT}} and Kashiwara~{\cite{KashiwaraDemazure}} to
define subsets $\mathcal{B}_{\lambda}(w) \subseteq \mathcal{B}_\lambda$ whose characters are Demazure
characters $\partial_w \mathbf{z}^{\lambda}$. If $w = 1_W$ then
$\mathcal{B}_{\lambda} (w) = \{ v_{\lambda} \}$ where $v_{\lambda}$ is the
highest weight element. If $w_0$ is the long element then
$\mathcal{B}_{\lambda} (w_0) =\mathcal{B}_{\lambda}$. If $w \leqslant w'$ in
the Bruhat order then $\mathcal{B}_{\lambda} (w) \subseteq
\mathcal{B}_{\lambda} (w')$.

In type~A, the results of Lascoux and
Sch{\"u}tzenberger~{\cite{LascouxSchutzenbergerKeys}} give
an alternative decomposition of $\mathcal{B}_{\lambda}$ into
disjoint subsets that we will here denote
$\mathcal{B}_{\lambda}^{\circ} (w)$. Then
\[ \mathcal{B}_{\lambda} (w) = \bigcup_{y \leqslant w}
   \mathcal{B}^{\circ}_{\lambda} (y) . \]
The term \textit{Demazure atom} is used in the literature to
mean two closely related but different things: the sets that
we are denoting $\mathcal{B}_{\lambda}^{\circ} (w)$ or their
characters, which are the functions
$\partial^\circ_w \mathbf{z}^{\lambda}$. When we need to
distinguish them, we will use the term {\textit{crystal
Demazure atoms}} to refer to the subsets
$\mathcal{B}_{\lambda}^{\circ} (w)$ while their characters
will be referred to as \textit{polynomial Demazure atoms}.

Since (up to the factor $\mathbf{z}^{\rho}$) the character of the colored
system indexed by $w$ is the polynomial Demazure atom
$\mathcal{B}^{\circ}_{\lambda} (w)$, we may hope that, when we identify the set
of states of our model with a subset of $\mathcal{B}_{\lambda}$, the the set
of states indexed by $w$ is $\mathcal{B}^{\circ}_{\lambda} (w)$. This is true and we will give a proof of this fact using 
techniques developed by Kashiwara, particularly the
$\star$-involution of the $\mathcal{B}_{\infty}$ crystal, as well as
(\ref{partdc}), which is proved using the Yang-Baxter equation.

As a biproduct of this proof we obtain apparently new algorithms for computing
Lascoux-Sch\"utzenberger right keys, which we now explain.

First, we will explain a theorem of Lascoux-Sch\"utzenberger that concerns the
following question: given a tableau $T \in \mathcal{B}_{\lambda}$, determine
$w \in W$ such that $T \in \mathcal{B}^{\circ}_{\lambda} (w)$. The set of
Demazure atoms is in bijection with the orbit $W
\lambda$ in the weight lattice, and this bijection may be made explicit as follows. The weights $W
\lambda$ are extremal in the sense that they are the vertices of the convex
hull of the set of weights of $\mathcal{B}_{\lambda}$. Each extremal weight 
$w \lambda$ has multiplicity one, in that there exists a unique element 
$u_{w \lambda}$ of $\mathcal{B}_{\lambda}$ with weight $w \lambda$. These
extremal elements are called \textit{key tableaux}, and they may be
characterized by the following property: if $C_1, \ldots, C_k$ are the columns
of a tableau $T$, then $T$ is a key if and only if each column $C_i$ contains
$C_{i + 1}$ elementwise.

Lascoux and Sch\"utzenberger proved that every crystal Demazure atom contains
a unique key tableau, and every key tableau is contained in a unique crystal
Demazure atom.  The weight of the key tableau in
$\mathcal{B}_\lambda^\circ(w)$ is $w\lambda$.  If
$T \in \mathcal{B}_{\lambda}$ let $\key (T)$ be the unique key that is in the
same atom as $T$. This is called the {\textit{right key}} by Lascoux
and Sch{\"u}tzenberger; its origin is in the work of
Ehresmann~{\cite{Ehresmann}} on the topology of flag varieties. (There is also
a \textit{left key}, which is $\key(T')'$, where $T \mapsto T'$ is the
Sch{\"u}tzenberger (Lusztig) involution of $\mathcal{B}_{\lambda}$.)

We will describe two apparently new algorithms that compute $\key (T')$ and
$\key (T)$, respectively. The algorithms depend on a map
$\omega:\mathcal{B}_\lambda\rightarrow W$ such that if $w=w_0\omega(T)$
then $T\in\mathcal{B}_\lambda^\circ(w)$. Thus $\key(T)$ is determined
by the condition that $\wt\bigl(\key(T)\bigr)=w\lambda = w_0\omega(T)\lambda$.
The extremal weight $w\lambda$ has multiplicity one in the crystal
$\mathcal{B}_\lambda$, so the unique key tableau $\key(T)$ with that
weight is determined by $w\lambda$. To compute it, the most frequently
occurring entry (as specified by the weight) must appear in every column of
$\key(T)$, the next most frequently occurring entry must then appear in every
remaining, non-filled column, and so on. The entries of the columns are
thus determined, and arranging each column in ascending order we get~$\key(T)$.

Given a tableau $T$, the first algorithm computes
$\omega(T')$, and the second algorithm computes $\omega(T)$.
The two algorithms depend on the notion of a \textit{nondescending
product} of a sequence of simple
reflections $s_i$ in the Weyl group $W$.
Let $i_1, \cdots, i_k$ be a sequence of indices and define the \textit{nondescending
product} $\Pi_{\operatorname{nd}}(s_{i_1},\cdots,s_{i_k})$
to be $s_{i_1}$ if $k=1$ and then recursively
\begin{equation}
\label{pinddef}
\Pind (s_{i_1}, \cdots, s_{i_k}) = \left\{ \begin{array}{ll}
     s_{i_1} \pi & \text{if $s_{i_1} \pi > \pi$}\\
     \pi & \text{otherwise},
   \end{array} \right.
\end{equation}
where $\pi = \Pind (s_{i_2}, \cdots, s_{i_k})$.

\begin{remark}
\label{heckecompute}
There is another way of calculating the nondescending product.
There is a degenerate Hecke algebra $\mathcal{H}$
with generators $S_i$ subject to the braid relations and the quadratic
relation $S_i^2 = S_i$.\footnote{It may be worth remarking that these are the same relations
satisfied by the Demazure operators $\partial_i$.}
Given $w \in W$, set $S_w = S_{j_1} \cdots S_{j_{\ell}}$
where $w = s_{j_1} \cdots s_{j_{\ell}}$ is a reduced expression. Then the
$S_w$ ($w\in W$) form a basis of $\mathcal{H}$, and we will denote
by $\{ \cdot \}$ the map from this basis to $W$ that sends $S_w$ to $w$. Then
\[\Pi_{\operatorname{nd}}(s_{i_1}, \cdots, s_{i_k}) = \{S_{i_1}\cdots S_{i_k}\}\;.\]
\end{remark}

An element $T$ of $\mathcal{B}_\lambda$ is a semistandard
Young tableau with entries in $\{1,2,\ldots,r\}$
and shape $\lambda$. There is associated with $T$ a Gelfand-Tsetlin pattern
$\Gamma(T)$ as follows. The top row is the shape $\lambda$;
the second row is the shape of the tableau obtained from
$T$ by erasing all entries equal to $r$. The third
row is the shape of the tableau obtained by further
erasing all $r-1$ entries, and so forth. For example
suppose that $r=4$, $\lambda=(5,3,1)$. Here is a
tableau and its Gelfand-Tsetlin pattern:
\begin{equation}
\label{examplepats}
T=\begin{ytableau}1&1&2&4&4\\2&3&4\\3\end{ytableau}\;,\qquad
\Gamma(T)=\left\{\begin{array}{cccccccccc}5&&3&&1&&0\\&3&&2&&1\\&&3&&1\\&&&2\end{array}\right\}\;.
\end{equation}

\subsection*{First algorithm}
To compute $\omega(T')$, we decorate the Gelfand-Tsetlin pattern
as follows. For each subtriangle
\[\begin{array}{ccc}x&&y\\&z\end{array}\]
if $z=y$ then we circle the $z$. We then transfer the circles in the Gelfand-Tsetlin pattern
to the following array:
\begin{equation}
\label{gamcirc}
\left[\begin{array}{ccccccc}
s_1&&s_2&&\cdots&&s_{r-1}\\
&\ddots&&\vdots&&\iddots\\
&&s_1&&s_2\\
&&&s_1 \end{array}\right]\;.
\end{equation}
Note that the array of reflections has one fewer row than the
first, but that circling cannot happen in the top row of the Gelfand-Tsetlin
pattern. Now we traverse this array in the order bottom to top,
right to left. We take the subsequence of circled
entries in the indicated order, and their nondescending
product is $\omega(T')$.

\subsection*{Second algorithm}
To compute $\omega(T)$, we decorate the Gelfand-Tsetlin pattern
as follows. For each subtriangle
\[\begin{array}{ccc}x&&y\\&z\end{array}\]
if $z=x$ then we circle the $z$. We then transfer the circles in the Gelfand-Tsetlin pattern
to the following array:
\begin{equation}
  \label{delcirc}
\left[\begin{array}{ccccccc}
s_1&&s_2&&\cdots&&s_{r-1}\\
&\ddots&&\vdots&&\iddots\\
&&s_{r-2}&&s_{r-1}\\
&&&s_{r-1} \end{array}\right]\;.
\end{equation}
Now we traverse this array in the order bottom to top,
left to right. We take the subsequence of circled
entries in the indicated order, and their nondescending product is $\omega(T)$.

\bigbreak
Let us illustrate these algorithms with the example (\ref{examplepats}).

For the first algorithm, we obtain the following circled Gelfand-Tsetlin pattern and array of simple reflections 
\[\left\{\vcenter{\xymatrix@-2pc{5&&3&&1&&0\\&*+[o][F-]{3}&&2&&{1}\\&&{3}&&*+[o][F-]{1}\\&&&2}}\right\}\;,\qquad
\left[\vcenter{\xymatrix@-2pc{*+[o][F-]{s_1}&&s_2&&{s_3}\\&{s_1}&&*+[o][F-]{s_2}\\&&s_1}}\right]
\]
The first algorithm predicts that if $T'$ is the Sch\"utzenberger involute
of $T$ then $\omega(T')=s_2s_1$, which is the nondescending product of the circle entries
in the order bottom to top, right to left. Thus $w_0\omega(T')=w_0s_2s_1=s_1s_2s_3s_2$.
We claim that $\key(T')$ is the unique key tableau with shape $(5,3,1,0)$ having weight
$w_0\omega(T')\lambda=(0,5,1,3)$. Let us check this. The tableau
$T'$ and its key (computed by Sage using the algorithm in Willis~\cite{WillisKey})
are:
\[T'=\begin{ytableau}1&1&1&2&2\\3&3&4\\4\end{ytableau}\;,\qquad
\key(T')=\begin{ytableau}2&2&2&2&2\\3&4&4\\4\end{ytableau}\;.\]
As claimed $\wt(\key(T'))=w_0\omega(T')\lambda$.

For the second algorithm, there are two circled entries, and we transfer
the circles to the array of reflections as follows:
\[\left\{\vcenter{\xymatrix@-2pc{5&&3&&1&&0\\&{3}&&2&&*+[o][F-]{1}\\&&*+[o][F-]{3}&&1\\&&&2}}\right\}\;,\qquad
\left[\vcenter{\xymatrix@-2pc{{s_1}&&s_2&&*+[o][F-]{s_3}\\&*+[o][F-]{s_2}&&s_3\\&&s_3}}\right]
\]
Thus $\omega(T)=s_2s_3$ is the (nondescending) product in the order
bottom to top, left to right. Then if $w=w_0s_2s_3=s_3s_1s_2s_1$,
the right key of $T$ is determined by the condition that its
weight is $w\lambda=(1,3,0,5)$. Indeed, the right key of $T$ is
\[\key(T)=\begin{ytableau}1&2&2&4&4\\2&4&4\\4\end{ytableau}\;.\]
This is the unique key tableau with shape $(5,3,1,0)$ and
weight $(1,3,0,5)$.

The two algorithms hinge on Theorem~\ref{daform}, which refines results on
keys due to Lascoux and Sch\"{u}tzenberger \cite{LascouxSchutzenbergerKeys}.
The proof of Theorem~\ref{daform} is detailed in the subsequent three sections
of the paper, and the resulting algorithms are proved in
Section~\ref{sec:algo}.

\subsection{A sketch of the proofs}
In Section~\ref{sec:kansas} we review the \textit{Tokuyama model}
(in its crystal limit), a statistical-mechanical system
$\mathfrak{S}_{\mathbf{z},\lambda}$ whose partition function is 
$\mathbf{z}^\rho s_\lambda(\mathbf{z})$ in terms of the Schur
function $s_\lambda$ (Proposition~\ref{zschur}). The states of this 5-vertex
model system are in bijection with
$\mathcal{B}_\lambda$. For $w\in W$ we will describe a refinement
$\mathfrak{S}_{\mathbf{z},\lambda,w}$ of this system in Section~\ref{sec:oz}
whose states are a subset of those of $\mathfrak{S}_{\mathbf{z},\lambda}$.
The Weyl group element $w$ is encoded in the boundary conditions.
Thus the set of states of $\mathfrak{S}_{\mathbf{z},\lambda,w}$ may be
identified with a subset of $\mathcal{B}_\lambda$. If $S$
is a subset of a crystal, the \textit{character} of $S$
is $\sum_{v\in S}\mathbf{z}^{\wt(v)}$. Using a Yang-Baxter equation, in
Theorem~\ref{zdematoms}, are able to prove a recursion formula for the
character of $\mathfrak{S}_{\mathbf{z},\lambda,w}$, regarded as
a subset of $\mathcal{B}_\lambda$, and this is the same as
the character of the crystal Demazure atom $\mathcal{B}_\lambda^\circ(w)$.
This suggests but does not prove that the states of
$\mathfrak{S}_{\mathbf{z},\lambda,w}$ comprise $\mathcal{B}_\lambda^\circ(w)$.
The equality of $\mathfrak{S}_{\mathbf{z},\lambda,w}$ and
$\mathcal{B}_\lambda^\circ(w)$ is Theorem~\ref{daform}.
Leveraging the information in Theorem~\ref{zdematoms} into
a proof of Theorem~\ref{daform} is accomplished in Sections~\ref{sec:binf}
and~\ref{sec:dafpro} using methods of Kashiwara~\cite{KashiwaraDemazure},
namely transferring the problem to the infinite $\mathcal{B}_\infty$ crystal,
then using Kashiwara's $\star$-involution of that crystal to transform and
solve the problem. The information that we obtained from the Yang-Baxter
equation in Theorem~\ref{zdematoms} is used at a key step (\ref{eq:keystep})
in the proof. A more detailed outline of these proofs will
be given near the beginning of Section~\ref{sec:binf}.

The two algorithms are treated in Section~\ref{sec:algo}, but the
key insight is earlier in Theorem~\ref{crystalmt}, where
the first algorithm is proved for $\mathfrak{S}_{\mathbf{z},\lambda,w}$.
The idea is that the unique permutation $w$ such that a given state of
$\mathfrak{S}_{\mathbf{z},\lambda}$ lies in of
$\mathfrak{S}_{\mathbf{z},\lambda,w}$ is determined by the pattern of
crossings of colored lines; these crossings correspond to the circled entries
in (\ref{gamcirc}). Then with Theorem~\ref{daform} in hand, the result applies
to $\mathcal{B}^\circ_\lambda(w)$. The second algorithm is deduced from the first using
properties of crystal involutions.

\bigbreak
\textbf{Acknowledgements:} This work was supported by NSF grants
DMS-1801527 (Brubaker) and DMS-1601026 (Bump). Buciumas was supported by ARC
grant DP180103150. Gustafsson was at Stanford University (his affiliation at the date of submission) supported by the Knut and Alice Wallenberg
Foundation. We thank Amol Aggarwal, Alexei Borodin, Vic Reiner, Anne
Schilling, Michael Wheeler and Matthew Willis for helpful conversations and communications. We thank the referees for useful comments which improved the exposition of the paper. 

\section{\label{sec:do}Demazure operators}
Let us review the theory of Demazure operators.
Let $\Phi$ be a root system with weight lattice $\Lambda$, which may be
regarded as the weight lattice of a complex reductive Lie group $G$. Thus if
$T$ is a maximal torus of $G$, then we may identify $\Lambda$ with the group
$X^{\ast} (T)$ of rational characters of $T$. If $\mathbf{z} \in T$ and
$\lambda \in \Lambda$ we will denote by $\mathbf{z}^{\lambda}$ the
application of $\lambda$ to $\mathbf{z}$. Let $\mathcal{O} (T)$ be the set
of polynomial functions on $T$, that is, finite linear combinations of the
functions $\mathbf{z}^{\lambda}$.

We decompose $\Phi$ into positive and negative roots, and let $\alpha_i$
($i\in I$) be the simple positive roots, where $I$ is an index set.
Let $\alpha_i^{\vee} \in X_{\ast} (T)$ denote the corresponding simple coroots and $s_i$ the
corresponding simple reflections generating the Weyl group $W$. To each simple reflection $s_i$ with
$i\in I$, we define the isobaric Demazure operator acting on
$f \in \mathcal{O} (T)$ by
\begin{equation} \label{isobaricddefined}
\partial_i f (\mathbf{z}) = 
\frac{f (\mathbf{z})-\mathbf{z}^{-\alpha_i} f (s_i \mathbf{z})}
{1 -\mathbf{z}^{-\alpha_i}}. \end{equation}
The numerator is divisible by the denominator, so the resulting function is
again in $\mathcal{O}(T)$.

It is straightforward to check that $\partial^2_i = \partial_i = s_i \partial_i$. Given any $\mu \in \Lambda$, set 
$k = \langle \mu, \alpha_i^{\vee} \rangle$ so $s_i (\mu) = \mu - k\alpha_i$. Then the action on the monomial $\mathbf{z}^{\mu}$ is given by 
\begin{equation}
  \partial_i \mathbf{z}^{\mu} = \left\{ \begin{array}{ll}
    \mathbf{z}^{\mu} +\mathbf{z}^{\mu - \alpha_i} + \ldots
    +\mathbf{z}^{s_i (\mu)} & \text{if $k \geqslant 0$,}\\
    0 & \text{if $k = - 1$,}\\
    - (\mathbf{z}^{\mu + \alpha_i} +\mathbf{z}^{\mu + 2 \alpha_i} + \ldots
    +\mathbf{z}^{s_i (\mu + \alpha_i)}) & \text{if $k < - 1$} .
  \end{array} \right.
\end{equation}
We will also make use of $\partial^\circ_i:=\partial_i-1$, that is
\[\partial^\circ_i f(\mathbf{z}):=
\frac{f(\mathbf{z})-f(s_i\mathbf{z})}{{\mathbf{z}^{\alpha}_i-1}}.\]

Both $\partial_i$ and $\partial^\circ_i$ satisfy the braid relations. Thus
\[\partial_{i}\partial_j\partial_i\cdots = \partial_j\partial_i\partial_j\cdots,\]
where the number of terms on both sides is the order of $s_is_j$ in $W$, and
similarly for the $\partial^\circ_i$. These are proved in~\cite{BumpLie},
Proposition~25.1 and Proposition~25.3. (There is a typo in the second
Proposition where the wrong font is used for $\partial_i$.) Consequently
to each $w \in W$, and any reduced decomposition $w = s_{i_1} \cdots s_{i_k}$, we may define
$\partial_w = \partial_{i_1} \cdots \partial_{i_k}$ and
$\partial^\circ_w = \partial^\circ_{i_1} \cdots
\partial^\circ_{i_k}$. For $w = 1$ we let $\partial_1 = \partial^\circ_1 = 1$.

Let $w_0$ be the long Weyl group element. If $\lambda$ is a dominant weight
let $\chi_{\lambda}$ denote the character of the irreducible representation
$\pi_{\lambda}$ with highest weight $\lambda$. The {\textit{Demazure character
formula}} is the identity, for $\mathbf{z}\in T$:
\[ \chi_{\lambda} (\mathbf{z}) = \partial_{w_0} \mathbf{z}^{\lambda} .
\]
For a proof, see~\cite{BumpLie}, Theorem~25.3. More generally for any Weyl
group element $w$, we may consider $\partial_w\mathbf{z}^\lambda$. These
polynomials are called \textit{Demazure characters}.

Next we review the theory of (polynomial) \textit{Demazure atoms}. 
These are polynomials of the form 
$\partial^\circ_w\mathbf{z}^\lambda$. They were introduced in
  type~A by Lascoux and Sch\"utzenberger~\cite{LascouxSchutzenbergerKeys}, who
  called them ``standard bases.'' The modern term ``Demazure atom''
  was introduced by Mason in~\cite{MasonAtoms}, who showed that they are
  specializations of nonsymmetric Macdonald polynomials, among
  other things. The following theorem, done for type~A in~\cite{LascouxSchutzenbergerKeys},
  relates Demazure characters and Demazure atoms and
  is valid for any finite Cartan type.

\begin{theorem}
  \label{thm:lskeys}
  Let $f\in\mathcal{O}(T)$. Then
  \begin{equation}
    \label{phidemaz}
    \partial_{w}f(\mathbf{z})=\sum_{y\leqslant w}\partial^\circ_{y}f(\mathbf{z}).
  \end{equation}
\end{theorem}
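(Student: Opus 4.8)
The plan is to prove the operator identity $\partial_w = \sum_{y\leqslant w}\partial^\circ_y$; since both sides are well-defined operators on $\mathcal{O}(T)$ (the right-hand side because each $\partial^\circ_y$ is independent of the chosen reduced word for $y$, by the braid relations), applying both sides to an arbitrary $f$ yields the stated formula. I would argue by induction on the length $\ell(w)$. The base case $w=1$ is immediate: the only $y\leqslant 1$ is $y=1$, and $\partial_1=\partial^\circ_1=1$.

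The algebraic input I would record first is a multiplication rule for the atoms. From $\partial^\circ_i=\partial_i-1$ together with $\partial_i^2=\partial_i$ one computes $(\partial^\circ_i)^2=-\partial^\circ_i$. Hence for every $y\in W$,
\[
\partial^\circ_i\,\partial^\circ_y=\begin{cases}\partial^\circ_{s_iy}&\text{if }s_iy>y,\\[2pt]-\,\partial^\circ_y&\text{if }s_iy<y.\end{cases}
\]
The first case is just the definition of $\partial^\circ_{s_iy}$ read off the reduced word obtained by prepending $s_i$ to a reduced word for $y$; the second follows by using the exchange property to choose a reduced word for $y$ beginning with $s_i$ and then applying $(\partial^\circ_i)^2=-\partial^\circ_i$.

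For the inductive step, write $w=s_iv$ with $\ell(w)=\ell(v)+1$. Using $\partial_i=1+\partial^\circ_i$ and the induction hypothesis $\partial_v=\sum_{y\leqslant v}\partial^\circ_y$,
\[
\partial_w=(1+\partial^\circ_i)\,\partial_v=\sum_{y\leqslant v}\partial^\circ_y+\sum_{y\leqslant v}\partial^\circ_i\,\partial^\circ_y.
\]
Setting $A=\{y\leqslant v: s_iy>y\}$ and $B=\{y\leqslant v: s_iy<y\}$ and splitting the last sum by the two cases above, the $B$-contributions cancel against part of the first sum, leaving
\[
\partial_w=\sum_{y\in A}\bigl(\partial^\circ_y+\partial^\circ_{s_iy}\bigr).
\]
It then remains to identify this with $\sum_{z\leqslant w}\partial^\circ_z$, that is, to establish the Bruhat-order decomposition $\{z:z\leqslant w\}=A\sqcup s_iA$, a disjoint union in which $A$ collects the $s_i$-lowered elements and $s_iA$ the $s_i$-raised ones.

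This decomposition is where the real work lies, and I expect it to be the main obstacle; everything else is routine algebra. I would deduce it from the lifting property of the Bruhat order, applied with $s_iw<w$ (which holds since $s_iw=v<w$). Given $z\leqslant w$, one distinguishes the two cases: if $s_iz>z$ then lifting gives $z\leqslant w\iff z\leqslant v$, so $z\in A$; if $s_iz<z$ then $s_iz<z\leqslant w$ forces $s_iz\leqslant v$, and since $s_i(s_iz)>s_iz$ we get $s_iz\in A$, whence $z\in s_iA$. Conversely $A\subseteq\{z:z\leqslant v\}\subseteq\{z:z\leqslant w\}$, while $s_iA\subseteq\{z:z\leqslant w\}$ again by the lifting property; disjointness of $A$ and $s_iA$ is immediate from the $s_i$-up/$s_i$-down dichotomy. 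Combining the displays completes the induction.
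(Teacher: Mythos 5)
Your proof is correct, and it shares the paper's overall skeleton: induction (on length, equivalently along the Bruhat order), the ascent recursion $\partial^\circ_i\partial^\circ_y=\partial^\circ_{s_iy}$ for $s_iy>y$, and, in disguise, the same combinatorial fact --- your partition $\{z\leqslant w\}=A\sqcup s_iA$ is exactly the regrouping $\sum_{y\leqslant s_iw}\phi(y)=\sum_{y\leqslant w,\,y<s_iy}\bigl(\phi(y)+\phi(s_iy)\bigr)$ that the paper extracts from Deodhar's Property~Z, and your verification of it via the lifting property is complete. Where you genuinely diverge is in the algebraic engine. You derive the signed quadratic relation $(\partial^\circ_i)^2=-\partial^\circ_i$ and hence the full multiplication rule $\partial^\circ_i\partial^\circ_y=-\partial^\circ_y$ for descents (via the exchange condition), so that when you expand $(1+\partial^\circ_i)\sum_{y\leqslant v}\partial^\circ_y$ the descent terms cancel outright. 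The paper never multiplies atoms along a descent and so avoids signed relations altogether; the price is that after pairing it holds only the ascent-restricted sum, and it must prove the auxiliary identity \eqref{partialsimp}, namely $\partial_i\sum_{y\leqslant w,\,y<s_iy}\phi(y)=\partial_i\sum_{y\leqslant w}\phi(y)$, using the idempotency $\partial_i^2=\partial_i$, before it can invoke the induction hypothesis inside $\partial_i$ and conclude via $\partial_i\partial_w f=\partial_{s_iw}f$. So your cancellation trick buys a shorter bookkeeping step (no analogue of \eqref{partialsimp} is needed), at the cost of the extra inputs $(\partial^\circ_i)^2=-\partial^\circ_i$ and the exchange-condition argument; the paper's route needs less about the $\partial^\circ$ operators but a slightly more elaborate reorganization of sums. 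Both arguments rest on the same Bruhat-order lemma, and both are sound.
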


\begin{proof}
  We prove this by induction with respect to the Bruhat order. Setting $\phi (w) := \partial^\circ_w f(\mathbf{z})$ and
  assuming the theorem for $w$, we must show that for any $s_i$ with $s_i w > w$ in the Bruhat order,
   \begin{equation}
   \label{inductionstep} 
   \sum_{y\leqslant s_iw}\phi(y)=\partial_{s_iw} f(\mathbf{z}). 
   \end{equation}

  We recall ``Property~Z'' of
  Deodhar~{\cite{DeodharCharacterizations}}, which asserts that if $s_i w > w$
  and $s_i y > y$ then the following inequalities are equivalent:
\[ y \leqslant w\quad \iff \quad y \leqslant s_i w \quad \iff \quad
s_i y \leqslant s_i w\;.\]
  Using this fact we may split the sum on the left-hand side as follows
  \begin{equation*}
    \sum_{y \leqslant s_i w} \phi (y)
    = \sum_{\substack{y \leqslant s_i w \\ y < s_i y }} \phi (y) + \sum_{\substack{y \leqslant s_i w \\ s_i y < y}} \phi (y)
    = \sum_{\substack{y \leqslant s_i w \\ y < s_i y}} \phi (y) + \sum_{\substack{s_i y \leqslant s_i w \\
    y < s_i y}} \phi (s_i y) 
    = \sum_{\substack{
       y \leqslant w\\
       y < s_i y}} \bigl(\phi (y) + \phi (s_i y) \bigr) \, .
  \end{equation*}

  If $s_iw>w$ then
  \begin{equation}
    \label{partialrecurse}
    \phi(w)+\phi(s_iw)=\partial_i\phi(w).
  \end{equation}
  Indeed, since $\partial_i=\partial^\circ_i+1$, this is another way of writing
  \[\partial^\circ_{s_iw}f(\mathbf{z})=\partial^\circ_i\partial^\circ_w\,f(\mathbf{z})\,,\]
  which follows from the definitions.
  
  Using (\ref{partialrecurse}), we obtain
  \begin{equation}
    \label{partialcomp}
     \sum_{y \leqslant s_i w} \phi (y) = \partial_i \Bigl(
     \sum_{\substack{y \leqslant w\\ y < s_i y}} \phi (y) \Bigr) .
  \end{equation}
Still assuming $s_iw>w$ we will prove that
\begin{equation}
  \label{partialsimp}
 \partial_i \sum_{\substack{   y \leqslant w\\
       y < s_i y}} \phi (y) = \partial_i \sum_{y \leqslant w} \phi (y) .
\end{equation}
  We split the terms on the right-hand side into three groups and write
  \[ \partial_i \sum_{y \leqslant w} \phi (y) = \partial_i 
     \sum_{\substack{
       y \leqslant w\\
       y < s_i y\\
       s_i y \leqslant w}} \bigl( \phi (y) + \phi (s_i y) \bigr) + \partial_i
     \sum_{\substack{
       y \leqslant w\\
       y < s_i y\\
       s_i y \nleqslant w}} \phi (y) . \]
  Now using (\ref{partialrecurse}) again this equals
  \[ \partial_i \sum_{\substack{
       y \leqslant w\\
       y < s_i y\\
       s_i y \leqslant w}} \partial_i \phi (y) + \partial_i
     \sum_{\substack{  y \leqslant w\\
       y < s_i y\\
       s_i y \nleqslant w}}\phi (y), \]
  and remembering that $\partial_i^2 = \partial_i$ this equals
  \[ \partial_i \Bigl( \sum_{\substack{
       y \leqslant w\\
       y < s_i y\\
       s_i y \leqslant w}} \phi (y) + \sum_{\substack{
       y \leqslant w\\
       y < s_i y\\
       s_i y \nleqslant w}}\phi (y) \Bigr) = \partial_i
     \sum_{\substack{
       y \leqslant w\\
       y < s_i y}} \phi (y), \]
     proving (\ref{partialsimp}).

   Now (\ref{inductionstep}) follows using (\ref{partialcomp}),
   (\ref{partialsimp}) and our induction hypothesis.
\end{proof}

\section{\label{sec:kansas}Ice Models for $\GL(r)$}

In statistical mechanics, an {\it ensemble} is a probability distribution
over every possible admissible state (i.e., microscopic arrangement) of particles in a given physical system.
The probability of any given state is measured by its {\it Boltzmann weight},
which is calculated by computing the energy associated to all local interactions
between particles. If there are only finitely many admissible states in the ensemble (as in all of the examples in this paper), 
then the {\it partition function} is defined to be a sum of the Boltzmann weights of each state. While computing
the partition function explicitly is often intractable, there is a nice class of so-called 
{\it solvable models}~\cite{Baxter,JimboMiwa} for
which the partition function may be computed using a microscopic symmetry of the partition function
known as the {\it Yang-Baxter equation}. With few exceptions,
solvable models are based on two-dimensional physical
systems.

The \textit{six-vertex} or \textit{ice-type models} are a class of
two-dimensional solvable models based on a square, planar grid in which admissible states are
determined by associating one of two spins $\{ +, - \}$ to 
each edge. See Figure~\ref{icestate} for an example. The term \textit{six-vertex} refers to the fact that only six admissible configurations of spins are
allowed on the four edges adjacent to any vertex in the grid. Similarly, \textit{five-vertex models} 
are systems, typically degenerations of six-vertex models,
in which only five local configurations are allowed. An example of such a set of configurations can be found in Figure~\ref{uncoloredbw} where the configuration labeled $\texttt{b}_1$ is removed. In the next two sections, we will revisit all of the above terms and give precise definitions
for an ensemble of admissible states and associated weights that result in a solvable model first for a five-vertex model based on the configurations in Figure~\ref{uncoloredbw}, and then generalizations thereof. 
Our Boltzmann weights for states will depend on several complex variables and while they will not try to model the probability distribution of a physical system, they will nonetheless
result in solvable variants of the above five-vertex model whose partition functions are explicitly evaluable as Demazure atoms.

More precisely, inspired by colored lattice models in Borodin and Wheeler~\cite{BorodinWheelerColored},
we will show that Demazure atoms and characters for $\GL(r)$ can be represented as
partition functions of certain ``colored five-vertex models.''
Strictly speaking, it is no longer true that there are only five
allowed configurations at a vertex. Still, the allowed
configurations can be classified into five different
groups, which we will denote $\tt{a}_1$, $\tt{a}_2$,
$\tt{b}_2$, $\tt{c}_1$ and $\tt{c}_2$ in keeping with notational conventions of \cite{Baxter}.
Before introducing the colored models, we begin with a model that is not
new, but rather a special case of models due to Hamel and
King~\cite{HamelKing} and Brubaker, Bump and Friedberg~\cite{hkice}.

Our five-vertex models will occur on square grids inside a finite rectangle of fixed size. Then to describe the ensemble of admissible states of the model, it suffices to specify
the size of the rectangle and the spins associated to edges along the boundary of this rectangle. Indeed, then the admissible states will consist of all possible assignments 
of spins to the remaining edges of the grid so that every vertex has adjacent edges in one of the five allowable configurations of Figure~\ref{uncoloredbw} (those not of form $\tt{b}_1$). 

Given an integer partition $\lambda = (\lambda_1, \ldots, \lambda_r)$ with $r$ parts, our grid will have $r$ rows and $N+1$ columns, where $N$ is a fixed integer at least $\lambda_1 + r - 1$.
In order to enumerate the vertices, the columns are labeled $0$ to $N$ from
right to left, and the rows are labeled $1$ to $r$ from top to bottom.
Vertices occur at every crossing of rows and columns and boundary edges are those edges in the grid connected to only one vertex. The spins $\{+,-\}$ of the edges on
the boundary are fixed according to the choice of $\lambda$ by the following rules.
For the top boundary edges, we put~$-$ in the columns labeled $\lambda_i+r-i$ for $i \in \{1, \ldots, r\}$ and~$+$ in the remaining columns. Then, we put~$+$ on all the left and bottom
boundary edges and~$-$ on the right boundary edges. As noted above, an (admissible) \textit{state} $\mathfrak{s}$ of the resulting system
assigns spins to the interior edges so that each vertex is one of the five configurations in Figure~\ref{uncoloredbw}
\textit{excluding} patterns of type $\tt{b}_1$, which are not allowed (or equivalently, are assigned weight $0$). An example of an admissible state for $\lambda = (2,1,0)$ and $N=4$ is given in Figure~\ref{icestate}. 

\begin{figure}[h]
\[
  \scalebox{.95}{\begin{tikzpicture}
    \draw [line width=0.45mm] (1,0)--(1,6);
    \draw [line width=0.45mm] (3,0)--(3,6);
    \draw [line width=0.45mm] (5,0)--(5,6);
    \draw [line width=0.45mm] (7,0)--(7,6);
    \draw [line width=0.45mm] (9,0)--(9,6);
    \draw [line width=0.45mm] (0,1)--(10,1);
    \draw [line width=0.45mm] (0,3)--(10,3);
    \draw [line width=0.45mm] (0,5)--(10,5);
    \draw[line width=0.45mm, fill=white] (1,6) circle (.35);
    \draw[line width=0.45mm, fill=white] (3,6) circle (.35);
    \draw[line width=0.45mm, fill=white] (5,6) circle (.35);
    \draw[line width=0.45mm, fill=white] (7,6) circle (.35);
    \draw[line width=0.45mm, fill=white] (9,6) circle (.35);
    \draw[line width=0.45mm, fill=white] (1,4) circle (.35);
    \draw[line width=0.45mm, fill=white] (3,4) circle (.35);
    \draw[line width=0.45mm, fill=white] (5,4) circle (.35);
    \draw[line width=0.45mm, fill=white] (7,4) circle (.35);
    \draw[line width=0.45mm, fill=white] (9,4) circle (.35);
    \draw[line width=0.45mm, fill=white] (1,2) circle (.35);
    \draw[line width=0.45mm, fill=white] (3,2) circle (.35);
    \draw[line width=0.45mm, fill=white] (5,2) circle (.35);
    \draw[line width=0.45mm, fill=white] (7,2) circle (.35);
    \draw[line width=0.45mm, fill=white] (9,2) circle (.35);
    \draw[line width=0.45mm, fill=white] (1,0) circle (.35);
    \draw[line width=0.45mm, fill=white] (3,0) circle (.35);
    \draw[line width=0.45mm, fill=white] (5,0) circle (.35);
    \draw[line width=0.45mm, fill=white] (7,0) circle (.35);
    \draw[line width=0.45mm, fill=white] (9,0) circle (.35);
    \draw[line width=0.45mm, fill=white] (0,5) circle (.35);
    \draw[line width=0.45mm, fill=white] (2,5) circle (.35);
    \draw[line width=0.45mm, fill=white] (4,5) circle (.35);
    \draw[line width=0.45mm, fill=white] (6,5) circle (.35);
    \draw[line width=0.45mm, fill=white] (8,5) circle (.35);
    \draw[line width=0.45mm, fill=white] (10,5) circle (.35);
    \draw[line width=0.45mm, fill=white] (0,3) circle (.35);
    \draw[line width=0.45mm, fill=white] (2,3) circle (.35);
    \draw[line width=0.45mm, fill=white] (4,3) circle (.35);
    \draw[line width=0.45mm, fill=white] (6,3) circle (.35);
    \draw[line width=0.45mm, fill=white] (8,3) circle (.35);
    \draw[line width=0.45mm, fill=white] (10,3) circle (.35);
    \draw[line width=0.45mm, fill=white] (0,1) circle (.35);
    \draw[line width=0.45mm, fill=white] (2,1) circle (.35);
    \draw[line width=0.45mm, fill=white] (4,1) circle (.35);
    \draw[line width=0.45mm, fill=white] (6,1) circle (.35);
    \draw[line width=0.45mm, fill=white] (8,1) circle (.35);
    \draw[line width=0.45mm, fill=white] (10,1) circle (.35);

    \path[fill=white] (1,1) circle (.2);
    \node at (1,1) {$z_3$};
    \path[fill=white] (3,1) circle (.2);
    \node at (3,1) {$z_3$};
    \path[fill=white] (5,1) circle (.2);
    \node at (5,1) {$z_3$};
    \path[fill=white] (7,1) circle (.2);
    \node at (7,1) {$z_3$};
    \path[fill=white] (9,1) circle (.2);
    \node at (9,1) {$z_3$};

    \path[fill=white] (1,3) circle (.2);
    \node at (1,3) {$z_2$};
    \path[fill=white] (3,3) circle (.2);
    \node at (3,3) {$z_2$};
    \path[fill=white] (5,3) circle (.2);
    \node at (5,3) {$z_2$};
    \path[fill=white] (7,3) circle (.2);
    \node at (7,3) {$z_2$};
    \path[fill=white] (9,3) circle (.2);
    \node at (9,3) {$z_2$};

    \path[fill=white] (1,5) circle (.2);
    \node at (1,5) {$z_1$};
    \path[fill=white] (3,5) circle (.2);
    \node at (3,5) {$z_1$};
    \path[fill=white] (5,5) circle (.2);
    \node at (5,5) {$z_1$};
    \path[fill=white] (7,5) circle (.2);
    \node at (7,5) {$z_1$};
    \path[fill=white] (9,5) circle (.2);
    \node at (9,5) {$z_1$};

    \node at (1,6) {$-$};
    \node at (3,6) {$+$};
    \node at (5,6) {$-$};
    \node at (7,6) {$+$};
    \node at (9,6) {$-$};
    \node at (1,4) {$+$};
    \node at (3,4) {$+$};
    \node at (5,4) {$-$};
    \node at (7,4) {$-$};
    \node at (9,4) {$+$};
    \node at (1,2) {$+$};
    \node at (3,2) {$+$};
    \node at (5,2) {$+$};
    \node at (7,2) {$-$};
    \node at (9,2) {$+$};
    \node at (1,0) {$+$};
    \node at (3,0) {$+$};
    \node at (5,0) {$+$};
    \node at (7,0) {$+$};
    \node at (9,0) {$+$};
    \node at (0,5) {$+$};
    \node at (2,5) {$-$};
    \node at (4,5) {$-$};
    \node at (6,5) {$-$};
    \node at (8,5) {$+$};
    \node at (10,5) {$-$};
    \node at (0,3) {$+$};
    \node at (2,3) {$+$};
    \node at (4,3) {$+$};
    \node at (6,3) {$-$};
    \node at (8,3) {$-$};
    \node at (10,3) {$-$};
    \node at (0,1) {$+$};
    \node at (2,1) {$+$};
    \node at (4,1) {$+$};
    \node at (6,1) {$+$};
    \node at (8,1) {$-$};
    \node at (10,1) {$-$};
    \node at (1.00,6.8) {$ 4$};
    \node at (3.00,6.8) {$ 3$};
    \node at (5.00,6.8) {$ 2$};
    \node at (7.00,6.8) {$ 1$};
    \node at (9.00,6.8) {$ 0$};

    \node at (-.75,1) {$ 3$};
    \node at (-.75,3) {$ 2$};
    \node at (-.75,5) {$ 1$};
  \end{tikzpicture}}
\]        
\caption{A state of a five-vertex model system with $N=4$, $r=3$ and $\lambda=(2,1,0)$.}
\label{icestate}
\end{figure}

Next we describe the Boltzmann weight $\beta(\mathfrak{s})$ of a state $\mathfrak{s}$. It will depend on a choice of $r$ complex numbers $\mathbf{z} = (z_1, \ldots, z_r)$ in $(\mathbb{C}^\times)^r$. 
We set
$$  \beta(\mathfrak{s}) := \prod_{v: \; \textrm{vertex in } \mathfrak{s}} \textrm{wt}(v), $$ 
where the function $\textrm{wt}(v)$ is defined in Figure~\ref{uncoloredbw} and depends on the row~$i$ in which the vertex $v$ appears.
For example, one may
quickly check that the state in Figure~\ref{icestate} has Boltzmann weight $z_1^3 z_2^2 z_3$. 

Let $\mathfrak{S}_{\mathbf{z},\lambda}$ denote the ensemble of all admissible states with boundary
conditions dictated by $\lambda$ and weights depending on parameters $\mathbf{z} = (z_1, \ldots, z_r)$.
Further define the \textit{partition function} $Z(\mathfrak{S}_{\mathbf{z},\lambda})$
to be the sum of the Boltzmann weights over all states in the ensemble.
Our notation suppresses the choice of number of columns $N$; indeed, the partition function is independent of any such (large enough) choice, since adding columns to the left
of the $\lambda_1+r-1$ column adds only $\tt{a}_1$ patterns, which have weight~1.

\begin{figure}
\[
\begin{array}{|c|c|c|c|c|c|}
\hline
  \tt{a}_1&\tt{a}_2&\tt{b}_1&\tt{b}_2&\tt{c}_1&\tt{c}_2\\
\hline
  \gammaice{+}{+}{+}{+} &
  \gammaice{-}{-}{-}{-} &
  \gammaice{+}{-}{+}{-} &
  \gammaice{-}{+}{-}{+} &
  \gammaice{-}{+}{+}{-} &
  \gammaice{+}{-}{-}{+}\\
\hline
  1&z_i&0&z_i&z_i&1\\
\hline\end{array}\]
\caption{Boltzmann weights $\textrm{wt}(v)$ for a vertex $v$ in the $i$-th row of the uncolored system.}
\label{uncoloredbw}
\end{figure}

We will next describe bijections between states of this system and
two other sets of combinatorial objects:
Gelfand-Tsetlin patterns with top row $\lambda$
and semistandard Young tableaux of shape
$\lambda$
with entries in $\{1,2,\ldots,r\}$. These will allow us to conclude that $Z(\mathfrak{S}_{\mathbf{z},\lambda})$
is, up to a simple factor, the Schur polynomial $s_\lambda(\mathbf{z})$.

Our boundary conditions imply via a
combinatorial argument (\cite{Baxter} Section~8.3 or Proposition~19.1
in~\cite{wmd5book}) that in any given state $\mathfrak{s}$ of
the system, the number of $-$ spins in the row of $N$ vertical edges above
the $i$-th row will be exactly $r+1-i$.
Let $(i,j)$ with $r \geqslant j\geqslant i$ enumerate these spins and let $A_{i,j}$ be their corresponding column numbers,
in descending order. Then
\[\GTP(\mathfrak{s}):=\left\{\begin{array}{ccccccc}
A_{1,1}&&A_{1,2}&&\cdots&& A_{1,r}\\
&A_{2,2}&&\cdots&& A_{2,r}\\
&&\ddots&\vdots&\iddots\\
&&&A_{r,r}\end{array}\right\}\]
is a \textit{left-strict} Gelfand-Tsetlin pattern, meaning that
$A_{i,j}>A_{i+1,j+1}\geqslant A_{i,j+1}$. This follows from
Proposition~19.1 of~\cite{wmd5book}, taking into account the
omission of $\tt{b}_1$ patterns in Figure~\ref{uncoloredbw}, which implies
that the inequality $A_{i,j}>A_{i+1,j+1}$ is strict.

\begin{remark}
If we allowed patterns of type $\tt{b}_1$ we would
have $A_{i,j}\geqslant A_{i+1,j+1}\geqslant A_{i,j+1}$
and $A_{i,j}>A_{i,j+1}$.
\end{remark}

Since $\GTP(\mathfrak{s})$ is left-strict, 
we may subtract $\rho_{r+1-i}:=(r-i,r-i-1,\cdots,0)$ from the $i$-th
row of $\GTP(\mathfrak{s})$ to obtain another Gelfand-Tsetlin
pattern. We denote this \textit{reduced} pattern by
\begin{equation}
  \label{reducedgtp}
 \GTP^\circ(\mathfrak{s}):=\left\{\begin{array}{ccccccc}
a_{1,1}&&a_{1,2}&&\cdots&& a_{1,r}\\
&a_{2,2}&&\cdots&& a_{2,r}\\
&&\ddots&\vdots&\iddots\\
&&&a_{r,r}\end{array}\right\}\;,
\end{equation}
whose entries are $a_{i,j}=A_{i,j}-r+j$.
The top row of $\GTP^\circ(\mathfrak{s})$ is $\lambda$. The map
$\mathfrak{s}\mapsto \GTP^\circ(\mathfrak{s})$ is easily seen to be
a bijection between the
states of $\mathfrak{S}_{\mathbf{z},\lambda}$ and the set of Gelfand-Tsetlin
patterns with top row $\lambda$.

There is also associated with a state $\mathfrak{s}$ a
semistandard Young tableau, which may be described as follows.
Let $\mathcal{B}_\lambda$ be the set of semi-standard Young tableaux
of shape $\lambda$ with entries in $\{1,2,3,\ldots,r\}$.
We first associate a tableau $\mathfrak{T}\in\mathcal{B}_\lambda$ with any
Gelfand-Tsetlin pattern. 
The top row of the pattern is
the shape $\lambda$ of $\mathfrak{T}$. Removing the
cells labeled $r$ from the tableau results in the shape
that is the second row of the Gelfand-Tsetlin pattern, etc. This
procedure is reversible and so there is another bijection between
$\mathcal{B}_\lambda$ and Gelfand-Tsetlin patterns with top row $\lambda$.
We may compose this with our previous bijection between
$\mathfrak{S}_{\mathbf{z},\lambda}$ and Gelfand-Tsetlin patterns. Given an
admissible state $\mathfrak{s}$, we will denote the associated tableau by
$\mathfrak{T}(\mathfrak{s})$.

For example with the state $\mathfrak{s}$ in Figure~\ref{icestate}, we have
\[\GTP(\mathfrak{s})=\left\{\begin{array}{ccccc}4&&2&&0\\&2&&1\\&&1\end{array}\right\},\qquad
\GTP^\circ(\mathfrak{s})=\left\{\begin{array}{ccccc}2&&1&&0\\&1&&1\\&&1\end{array}\right\},\qquad
\mathfrak{T}(\mathfrak{s})=\begin{ytableau}1&3\\2\end{ytableau}\;.\]

The set $\mathcal{B}_\lambda$ has the structure of a
Kashiwara-Nakashima crystal of tableaux (see \cite{KashiwaraNakashima,BumpSchilling}).
As such it comes with a weight map
$\wt:\mathcal{B}_\lambda\longrightarrow\Lambda,$ where $\Lambda \simeq \mathbb{Z}^r$ denotes
the weight lattice for $G=\GL(r)$.
If $\mathfrak{T}\in\mathcal{B}_\lambda$, then identifying $\Lambda$ with $\mathbb{Z}^r$,
we define
$\wt(\mathfrak{T})=(\mu_1,\cdots,\mu_r)$ where $\mu_i$ is the number
of entries in $\mathfrak{T}$ equal to~$i$. 

\begin{proposition}
  \label{zschur}
  Let $\lambda \in \Lambda$ be a dominant weight and $\mathfrak{s} \in \mathfrak{S}_{\mathbf{z},\lambda}$ be an admissible state of the uncolored five-vertex model defined above.
  \begin{enumerate}[label={(\roman*)}, leftmargin=*]
    \item \label{itm:zweightid} The Boltzmann weight $\beta(\mathfrak{s})$ and the weight map of the associated tableau $\mathfrak{T}(\mathfrak{s})$ are related by
    $$\beta(\mathfrak{s})=\mathbf{z}^{\rho+w_0 \wt(\mathfrak{T}(\mathfrak{s}))}.$$
  \item \label{itm:schur} The partition function of an ensemble $\mathfrak{S}_{\mathbf{z},\lambda}$ is related to Schur functions by $${Z(\mathfrak{S}_{\mathbf{z},\lambda})=\mathbf{z}^\rho\,s_\lambda(\mathbf{z})}.$$
\end{enumerate}
\end{proposition}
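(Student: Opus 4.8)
The plan is to prove (i) by a direct, row-by-row computation of the Boltzmann weight, and then deduce (ii) by summing over states and invoking the bijection with $\mathcal{B}_\lambda$ together with the symmetry of $s_\lambda$. First I would note that $\beta(\mathfrak{s})$ factors over rows, $\beta(\mathfrak{s})=\prod_{i=1}^r z_i^{d_i}$, where $d_i$ is the number of vertices in the $i$-th row whose weight in Figure~\ref{uncoloredbw} is $z_i$ rather than $1$. Inspecting the table, these are exactly the configurations $\tt{a}_2$, $\tt{b}_2$, and $\tt{c}_1$, which are precisely the vertices carrying a $-$ on their left horizontal edge. Since the left boundary edge is $+$ and each interior horizontal edge is the left edge of exactly one vertex, $d_i$ equals the number of interior horizontal edges in row $i$ bearing a $-$; reading a $-$ as a lattice path, this is the total rightward (column-decreasing) displacement of the $-$ spins as they traverse row $i$.

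Next I would convert this displacement into the entries of $\GTP(\mathfrak{s})$. The $-$ spins enter row $i$ from above at the columns recorded by row $i$ of $\GTP(\mathfrak{s})$ and leave below at those recorded by row $i+1$, with exactly one spin exiting through the right boundary (this accounts for the drop from $r+1-i$ to $r-i$ vertical $-$'s established earlier). Because the exclusion of $\tt{b}_1$ forces each path to move monotonically, the total displacement is the sum of entry columns minus the sum of exit columns, namely $d_i=\sum_{j=i}^r A_{i,j}-\sum_{j=i+1}^r A_{i+1,j}$. Substituting $A_{i,j}=a_{i,j}+r-j$, the $(r-j)$-terms contribute $r-i$, while the reduced-pattern terms contribute $\sum_{j=i}^r a_{i,j}-\sum_{j=i+1}^r a_{i+1,j}$. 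Since row $i$ of $\GTP^\circ(\mathfrak{s})$ is the shape of $\mathfrak{T}(\mathfrak{s})$ after deleting all entries exceeding $r+1-i$, its row sum is $\mu_1+\cdots+\mu_{r+1-i}$, so the difference of consecutive row sums is $\mu_{r+1-i}$. Hence $d_i=(r-i)+\mu_{r+1-i}$, which is exactly the $i$-th coordinate of $\rho+w_0\wt(\mathfrak{T}(\mathfrak{s}))$, proving (i).

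For (ii) I would sum (i) over all admissible states and use that $\mathfrak{s}\mapsto\mathfrak{T}(\mathfrak{s})$ is a bijection onto $\mathcal{B}_\lambda$:
\[
Z(\mathfrak{S}_{\mathbf{z},\lambda})=\sum_{\mathfrak{s}}\mathbf{z}^{\rho+w_0\wt(\mathfrak{T}(\mathfrak{s}))}=\mathbf{z}^{\rho}\sum_{\mathfrak{T}\in\mathcal{B}_\lambda}\mathbf{z}^{w_0\wt(\mathfrak{T})}.
\]
The remaining sum is $s_\lambda(z_r,\ldots,z_1)$ by the combinatorial (tableau) definition of the Schur polynomial, and since $s_\lambda$ is symmetric this equals $s_\lambda(\mathbf{z})$, giving $Z(\mathfrak{S}_{\mathbf{z},\lambda})=\mathbf{z}^{\rho}\,s_\lambda(\mathbf{z})$.

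The main obstacle is the single genuinely model-specific step: justifying the displacement identity $d_i=\sum_j A_{i,j}-\sum_j A_{i+1,j}$, that is, that each $-$ spin travels monotonically so that counting $z_i$-weighted vertices coincides with measuring horizontal displacement. This rests on the same path-routing combinatorics, and the strictness $A_{i,j}>A_{i+1,j+1}\ge A_{i,j+1}$, underlying the bijection with Gelfand-Tsetlin patterns cited from~\cite{Baxter,wmd5book}. Once that is in place the rest is bookkeeping, and the $w_0$-twist appearing in (i) is harmless precisely because Schur polynomials are symmetric.
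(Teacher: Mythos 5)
Your proof is correct and follows essentially the same route as the paper's: part \ref{itm:zweightid} by observing that a vertex in row $i$ contributes $z_i$ exactly when its left horizontal edge carries a $-$, counting those edges via the columns $A_{i,j}$ of $\GTP(\mathfrak{s})$ to get the exponent $(r-i)+\mu_{r+1-i}$, and part \ref{itm:schur} by summing over the bijection with $\mathcal{B}_\lambda$ and invoking the symmetry of the Schur polynomial. The only cosmetic difference is that you organize the edge count as a path-displacement argument, while the paper counts directly the $-$ edges lying between columns $A_{i,j}$ and $A_{i+1,j+1}$ (and to the right of $A_{i,r}$), which is the same computation.
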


To illustrate \ref{itm:zweightid}, in the example of Figure~\ref{icestate}, we have
\[\beta(\mathfrak{s})=z_1^3z_2^2z_3,\quad\mathbf{z}^\rho=z_1^2z_2,\quad \textrm{and} \quad
\mathbf{z}^{w_0\wt(\mathfrak{T}(\mathfrak{s}))}=z_1z_2z_3.\]

\begin{proof}[Proof of Proposition~\ref{zschur}]
  To prove \ref{itm:zweightid}, note that from the weights in Figure~\ref{uncoloredbw} a
  vertex in the $i$-th row contributes a factor of $z_i$ if and only
  if the spin to the left of the vertex is $-$. Hence the power of
  $z_i$ equals the number of $-$ spins on the horizontal edges in
  the $i$-th row, not counting the $-$ on the right boundary edge.
  Now such $-$ occur on the horizontal edges between the $A_{i,j}$ and
  $A_{i+1,j+1}$ columns, or to the right of the $A_{i,r}$ column. Hence
  the power of $z_i$ in $\mathfrak{S}_{\mathbf{z},\lambda}$ is
  \[\sum_{j=i}^rA_{i,j}-\sum_{j=i}^{r-1}A_{i+1,j+1}=
  \left(\sum_{j=i}^ra_{i,j}-\sum_{j=i+1}^{r}a_{i+1,j}\right)+r-i.\]
  The term in parentheses is the number of $r+1-i$
  entries in the tableau $\mathfrak{T}(\mathfrak{s})$.
  Taking the product over all $i$ gives~\ref{itm:zweightid}.

  Using \ref{itm:zweightid} and the combinatorial formula
  \[s_\lambda(\mathbf{z})=\sum_{\mathfrak{T}}\mathbf{z}^{\wt{\mathfrak{T}}}\]
  for the Schur function we have $Z(\mathfrak{S}_{\mathbf{z},\lambda})=\mathbf{z}^\rho\,s_\lambda(w_0\mathbf{z})$.
  Part \ref{itm:schur} now follows from the symmetry of the Schur function.
\end{proof}

Alternatively, we can evaluate the partition function using a local symmetry
known as the Yang-Baxter equation, which is Theorem~\ref{thm:ybe} below.
To state this we need to introduce a new type of vertices that we will call
rotated vertices. These vertices are rotated by 45 degrees counterclockwise
and there are two parameters $z_i,z_j$ associated to each vertex. We denote
such rotated vertices by $R_{z_i,z_j}$ (here we use $R$ as their Boltzmann
weights may be alternately viewed as entries of an $R$-matrix that ``solves''
a lattice model). These vertices can be attached to the grid systems we defined
before, like the one in Figure~\ref{icestate} to obtain new systems. It is by
working with these new systems that we can use the Yang-Baxter equation and
derive functional equations for the partition function of our initial system
(the one without any rotated vertices).

The Boltzmann weights of the rotated vertices are different from the Boltzmann weights of the regular vertices and are given in Figure~\ref{crystalrmatrix}.

\begin{figure}[b]
\[\begin{array}{|c|c|c|c|c|}
\hline
\begin{tikzpicture}[scale=0.7]
\draw (0,0) to [out = 0, in = 180] (2,2);
\draw (0,2) to [out = 0, in = 180] (2,0);
\draw[fill=white] (0,0) circle (.35);
\draw[fill=white] (0,2) circle (.35);
\draw[fill=white] (2,0) circle (.35);
\draw[fill=white] (2,2) circle (.35);
\node at (0,0) {$+$};
\node at (0,2) {$+$};
\node at (2,2) {$+$};
\node at (2,0) {$+$};
\node at (2,0) {$+$};
\path[fill=white] (1,1) circle (.3);
\node at (1,1) {$R_{z_i,z_j}$};
\end{tikzpicture}&
\begin{tikzpicture}[scale=0.7]
\draw (0,0) to [out = 0, in = 180] (2,2);
\draw (0,2) to [out = 0, in = 180] (2,0);
\draw[fill=white] (0,0) circle (.35);
\draw[fill=white] (0,2) circle (.35);
\draw[fill=white] (2,0) circle (.35);
\draw[fill=white] (2,2) circle (.35);
\node at (0,0) {$-$};
\node at (0,2) {$-$};
\node at (2,2) {$-$};
\node at (2,0) {$-$};
\path[fill=white] (1,1) circle (.3);
\node at (1,1) {$R_{z_i,z_j}$};
\end{tikzpicture}&
\begin{tikzpicture}[scale=0.7]
\draw (0,0) to [out = 0, in = 180] (2,2);
\draw (0,2) to [out = 0, in = 180] (2,0);
\draw[fill=white] (0,0) circle (.35);
\draw[fill=white] (0,2) circle (.35);
\draw[fill=white] (2,0) circle (.35);
\draw[fill=white] (2,2) circle (.35);
\node at (0,0) {$-$};
\node at (0,2) {$+$};
\node at (2,2) {$-$};
\node at (2,0) {$+$};
\path[fill=white] (1,1) circle (.3);
\node at (1,1) {$R_{z_i,z_j}$};
\end{tikzpicture}&
\begin{tikzpicture}[scale=0.7]
\draw (0,0) to [out = 0, in = 180] (2,2);
\draw (0,2) to [out = 0, in = 180] (2,0);
\draw[fill=white] (0,0) circle (.35);
\draw[fill=white] (0,2) circle (.35);
\draw[fill=white] (2,0) circle (.35);
\draw[fill=white] (2,2) circle (.35);
\node at (0,0) {$-$};
\node at (0,2) {$+$};
\node at (2,2) {$+$};
\node at (2,0) {$-$};
\path[fill=white] (1,1) circle (.3);
\node at (1,1) {$R_{z_i,z_j}$};
\end{tikzpicture}&
\begin{tikzpicture}[scale=0.7]
\draw (0,0) to [out = 0, in = 180] (2,2);
\draw (0,2) to [out = 0, in = 180] (2,0);
\draw[fill=white] (0,0) circle (.35);
\draw[fill=white] (0,2) circle (.35);
\draw[fill=white] (2,0) circle (.35);
\draw[fill=white] (2,2) circle (.35);
\node at (0,0) {$+$};
\node at (0,2) {$-$};
\node at (2,2) {$-$};
\node at (2,0) {$+$};
\path[fill=white] (1,1) circle (.3);
\node at (1,1) {$R_{z_i,z_j}$};
\end{tikzpicture}\\
\hline
z_j&z_i&z_i-z_j&z_i&z_j\\
\hline
\end{array}\]
\caption{The R-matrix for the uncolored system. From~\cite{BBB} we know
that we may regard this combinatorial R-matrix as the ``crystal limit'' of the
$U_q(\widehat{\mathfrak{gl}}(1|1))$ R-matrix when $q\rightarrow 0$.}
\label{crystalrmatrix}
\end{figure}

Now consider the following two miniature systems that contain both regular and rotated vertices:
\begin{equation}
\label{eqn:ybe}
\hfill
\begin{tikzpicture}[baseline=(current bounding box.center)]
  \draw (0,1) to [out = 0, in = 180] (2,3) to (4,3);
  \draw (0,3) to [out = 0, in = 180] (2,1) to (4,1);
  \draw (3,0) to (3,4);
  \draw[fill=white] (0,1) circle (.3);
  \draw[fill=white] (0,3) circle (.3);
  \draw[fill=white] (3,4) circle (.3);
  \draw[fill=white] (4,3) circle (.3);
  \draw[fill=white] (4,1) circle (.3);
  \draw[fill=white] (3,0) circle (.3);
  \draw[fill=white] (2,3) circle (.3);
  \draw[fill=white] (2,1) circle (.3);
  \draw[fill=white] (3,2) circle (.3);
  \node at (0,1) {$a$};
  \node at (0,3) {$b$};
  \node at (3,4) {$c$};
  \node at (4,3) {$d$};
  \node at (4,1) {$e$};
  \node at (3,0) {$f$};
  \node at (2,3) {$g$};
  \node at (3,2) {$h$};
  \node at (2,1) {$i$};
\path[fill=white] (3,3) circle (.3);
\node at (3,3) {$z_i$};
\path[fill=white] (3,1) circle (.3);
\node at (3,1) {$z_j$};
\path[fill=white] (1,2) circle (.3);
\node at (1,2) {$R_{z_i,z_j}$};
\end{tikzpicture}\qquad\qquad
\begin{tikzpicture}[baseline=(current bounding box.center)]
  \draw (0,1) to (2,1) to [out = 0, in = 180] (4,3);
  \draw (0,3) to (2,3) to [out = 0, in = 180] (4,1);
  \draw (1,0) to (1,4);
  \draw[fill=white] (0,1) circle (.3);
  \draw[fill=white] (0,3) circle (.3);
  \draw[fill=white] (1,4) circle (.3);
  \draw[fill=white] (4,3) circle (.3);
  \draw[fill=white] (4,1) circle (.3);
  \draw[fill=white] (1,0) circle (.3);
  \draw[fill=white] (2,3) circle (.3);
  \draw[fill=white] (1,2) circle (.3);
  \draw[fill=white] (2,1) circle (.3);
  \node at (0,1) {$a$};
  \node at (0,3) {$b$};
  \node at (1,4) {$c$};
  \node at (4,3) {$d$};
  \node at (4,1) {$e$};
  \node at (1,0) {$f$};
  \node at (2,3) {$j$};
  \node at (1,2) {$k$};
  \node at (2,1) {$l$};
\path[fill=white] (1,3) circle (.3);
\node at (1,3) {$z_j$};
\path[fill=white] (1,1) circle (.3);
\node at (1,1) {$z_i$};
\path[fill=white] (3,2) circle (.3);
\node at (3,2) {$R_{z_i,z_j}$};
\end{tikzpicture}
\end{equation}

Here, as with the system defined before, we fix the spins of the exterior edges ($a,b,c,d,e,f$). 
An assignment of spins to the interior edges is again called a state. 
Both systems have
a partition function defined by summing the weights of the admissible states made from all
possible assignments of spins to the interior edges ($g$, $h$, $i$
in the left system, or $j$, $k$, $l$ on the right). The weight of the entire state is computed just as above: we take a product of the weights of each vertex using the weights of the regular vertices that are given in Figure~\ref{uncoloredbw} and the weights of the rotated vertices that are given in Figure~\ref{crystalrmatrix}.

For example if $(a,b,c,d,e,f)=(+,-,+,-,+,+)$ there is
only one choice $(g,h,i)=(-,+,+)$ that gives a nonzero
contribution to the first system, and the partition
function is the Boltzmann weight $z_iz_j$ of this state.
For the second system, there are two states with
nonzero contribution, namely $(j,k,l)=(-,+,+)$, with
weight $z_j^2$ and $(+,-,-)$ with weight $z_j(z_i-z_j)$.
The partition function again equals $z_iz_j$.

%
%
%
%

\begin{theorem}
\label{thm:ybe}
Let $a,b,c,d,e,f\in\{+,-\}$. Then the partition functions of
the two systems in \eqref{eqn:ybe} are equal.
\end{theorem}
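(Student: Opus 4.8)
The plan is to prove the identity by a direct but highly constrained analysis of the admissible states of each system in \eqref{eqn:ybe}, using a conservation law to eliminate all but finitely many boundary choices. First I would record the structural feature shared by the regular weights of Figure~\ref{uncoloredbw} and the $R$-matrix weights of Figure~\ref{crystalrmatrix}: at every vertex the number of $-$ spins on its two incoming edges equals the number on its two outgoing edges (left and top versus right and bottom for a regular vertex; the two left-hand corners versus the two right-hand corners for a rotated $R$-vertex). Since the minus-lines flow downward and rightward, summing this local balance over all vertices of either diagram makes the interior edges cancel in pairs, leaving the global constraint
$$\#\{-\text{'s among }a,b,c\}=\#\{-\text{'s among }d,e,f\}.$$
Both partition functions therefore vanish unless this holds, so it suffices to treat the boundary data satisfying it; this leaves at most twenty cases.

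Reading the weights directly off the two pictures, the left system has partition function
$$Z_L=\sum_{g,h,i} R_{z_i,z_j}(a,b;g,i)\,\mathrm{wt}_{z_i}(g,c,d,h)\,\mathrm{wt}_{z_j}(i,h,e,f),$$
and the right system has
$$Z_R=\sum_{j,k,l}\mathrm{wt}_{z_j}(b,c,j,k)\,\mathrm{wt}_{z_i}(a,k,l,f)\,R_{z_i,z_j}(l,j;d,e),$$
where $\mathrm{wt}_z(\cdot,\cdot,\cdot,\cdot)$ lists (left, top, right, bottom) as in Figure~\ref{uncoloredbw} and $R_{z_i,z_j}(\cdot,\cdot;\cdot,\cdot)$ lists (bottom-left, top-left; top-right, bottom-right) as in Figure~\ref{crystalrmatrix}. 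For each admissible boundary choice I would apply the conservation law at the individual interior vertices, together with the vanishing of the forbidden $\tt{b}_1$ configuration, to pin down the interior edges. In each system this forces most of the interior spins, leaving in the worst case a single binary choice on one of the two sides.

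The cases then fall into two groups. For the large majority, both sides are frozen---a unique admissible state each---and equality is immediate once one checks that the two monomials in $z_i,z_j$ coincide; the all-$+$ and all-$-$ boundaries, and those in which a lone minus-line passes straight through, are of this type. The genuinely nontrivial cases are the few in which one side admits two admissible interior states, so that the braiding weight $z_i-z_j$ of the crossing $R$-configuration contributes and must combine with a $z_i$ or $z_j$ term to reproduce the single frozen monomial on the other side. This is exactly the worked example $(a,b,c,d,e,f)=(+,-,+,-,+,+)$ preceding the theorem, where $Z_R=z_j^2+z_j(z_i-z_j)=z_iz_j=Z_L$. I expect these cancellation cases to be the only real obstacle; each reduces to an elementary linear identity in $z_i$ and $z_j$, and verifying them completes the proof.
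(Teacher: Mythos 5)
Your proof is correct, and it takes a genuinely different route from the paper's. The paper proves Theorem~\ref{thm:ybe} by citation: it is the special case $t_i=0$ of the six-vertex Yang--Baxter equation established in \cite{hkice} (Table~1 of the arXiv version). Your argument is instead a self-contained elementary verification, and its structural claims all hold: conservation of $-$ spins (in through left/top and out through right/bottom at a regular vertex, in through the two left corners and out through the two right corners at a rotated vertex) is satisfied by every nonzero weight in Figures~\ref{uncoloredbw} and~\ref{crystalrmatrix}, so both partition functions vanish unless $\#\{-\text{ among }a,b,c\}=\#\{-\text{ among }d,e,f\}$, leaving $1+9+9+1=20$ boundaries. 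Your description of the residual freedom is also accurate: since $d,e,f$ are fixed outputs, a binary choice can arise on the left side only at the $R$-vertex, which requires $(a,b)=(-,+)$, and on the right side only at the $z_j$-vertex, which requires $(b,c)=(-,+)$; as these demand opposite values of $b$, at most one side is ever unfrozen, and the two-state side collapses via the weight $z_i-z_j$ exactly as in your worked example (and as in the boundary $(a,b,c;d,e,f)=(-,+,+;+,-,+)$, where the left side gives $(z_i-z_j)z_i+z_iz_j=z_i^2$ while the right side is frozen at $z_i^2$). What your route buys is independence from \cite{hkice} and transparency about where the five-vertex constraint (the forbidden $\mathtt{b}_1$ pattern) enters; what the paper's citation buys is brevity and the placement of this model as a degeneration of a known solvable six-vertex model. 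Note that your method is precisely how the paper handles the harder colored case (Theorem~\ref{coloredybethm}), where the analogous finite check is delegated to a computer; to make your proposal a complete proof you must still run through (or mechanically verify) the remaining boundary cases, but no new idea is needed to do so.
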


\begin{proof}
This is a special case of a Yang-Baxter equation found in~\cite{hkice}.
Referring to the arXiv version of the paper, the Boltzmann weights
are in Table~1 of that paper with $t_i=0$.
\end{proof}

The symmetry of the Schur function may be easily deduced from this via a
procedure called the ``train argument'' that amounts to repeated use of
Theorem~\ref{thm:ybe} on a larger grid system with an attached rotated vertex as later illustrated in Figure~\ref{modpf} for the colored five-vertex model. See also~\cite[Lemma 4]{hkice}, leading to an alternate proof of
the evaluation of the partition function. 

The models of this section may be described as the ``uncolored'' (or
equivalently ``one-colored'') version of our five-vertex models. They were
known before the writing of this paper. In the next section, we present a
generalization known as colored models, which are new. We will prove a
Yang-Baxter equation in the colored setting (Theorem~\ref{coloredybethm})
that will then be used to relate the partition function of the lattice models
to the Demazure atoms.

\section{\label{sec:oz}Colored Ice Models for $\GL(r)$}

There are multiple ways to depict admissible states of the six-vertex
model. Many of these are described in Chapter~8 of Baxter's inspiring book~\cite{Baxter}.  In
particular, rather than using spins or arrows to decorate edges, one can
instead use the presence or absence of a line (or ``path'') along an edge.
These are the ``line configurations'' in~\cite{Baxter}, Figure~8.2.
Our convention will be that the presence of a line corresponds to a $-$ spin,
so that admissible states may be viewed as a collection of paths moving
downward and rightward through the lattice. Inspired by ideas of Borodin and
Wheeler~\cite{BorodinWheelerColored} in the context of certain other solvable
lattice models, we may assign colors to each such path to refine the partition
function of the prior section to produce polynomial Demazure atoms.

First we describe the relevant solvable colored lattice model. Just as before, upon fixing a dominant weight $\lambda = (\lambda_1, \ldots, \lambda_r)$, we begin with a rectangular lattice of $N+1$ columns ($N \geqslant \lambda_1 + r - 1$) and $r$ rows whose
edges are to be assigned spins $\pm$ according to a five-vertex model.  Moreover, to each edge with $-$ spin, we assign a ``color,'' an additional attribute from a finite set $\{ c_1,\cdots,c_r \}$ of size equal to the number of rows in the model. We will order these colors by 
$c_1>c_2>\cdots>c_r$. By a \textit{colored spin} we mean either $+$, or a color $c_i$.
For the purpose of comparing with the uncolored system, we regard
a colored spin $c_i$ as a spin $-$ with an extra piece
of data, namely a color.

To each dominant weight $\lambda$, we now define $r!$ distinct partition functions. 
Given $w\in W=S_r$ and a vector of colors $\mathbf{c}=(c_1,\cdots,c_r)$, let $w\mathbf{c}$ be the permuted vector of colors, that is
$(w\mathbf{c})_i=c_{w^{-1}i}$. We will call such vectors of colors \emph{flags}. Now assign boundary conditions to the colored lattice model as follows.
To the vertical top boundary edges, we assign spins $-$ in the
columns labeled $\lambda_i+r-i$ as before ($1\leqslant i\leqslant r$).
Now however we also need to assign colors to these edges,
and we assign the color $c_i$ to the $\lambda_i+r-i$
column. Each edge along the right boundary is also assigned a $-$ spin,
but here we assign the colors $w\mathbf{c}$ in order from
top to bottom. Just as before, all remaining boundary spins along the bottom, left, and top are $+$.

Admissible states are then assignments of colored spins to the interior edges such that every vertex has
adjacent spins as in Figure~\ref{coloredweights} with the understanding that the colors {\red{red}}~$>$~{\blue{blue}} may be replaced by any colors $c_i$ and $c_j$ with $c_i > c_j$. Boltzmann weights for each vertex are listed in the figure as well. 
We denote the resulting system of admissible states as $\mathfrak{S}_{\mathbf{z},\lambda,w}$. In short, the choice of $w \in W$ specifies
the row where each colored path, moving downward and rightward through the lattice, exits the right-hand boundary. As before, we denote by $Z(\mathfrak{S}_{\mathbf{z},\lambda,w})$ the partition function of the colored lattice model. 

\begin{figure}[t]
\[
\begin{array}{|c|c|c|c|c|c|}
\hline
\tt{a}_1&\multicolumn{4}{|c|}{\tt{a}_2}&\tt{b}_1\\
\hline
\gammaa & \gammaA{red}{blue}{red}{blue}{R}{B}{R}{B} & \gammaAA{blue}{red}{B}{R}
& \gammaA{red}{red}{red}{red}{R}{R}{R}{R} & \gammaA{blue}{blue}{blue}{blue}{B}{B}{B}{B}
& \gammab{red}{R}
\\ 
\hline
1&\multicolumn{4}{|c|}{z_i}&0\\
\hline
\hline
\multicolumn{2}{|c|}{\tt{b_2}} & \multicolumn{2}{|c|}{\tt{c}_1}&\multicolumn{2}{|c|}{\tt{c}_2}\\
\hline
\gammaB{red}{R}&\gammaB{blue}{B} & \gammac{red}{R} & \gammac{blue}{B} &\gammaC{red}{R} &\gammaC{blue}{B}\\
\hline
\multicolumn{2}{|c|}{z_i}& \multicolumn{2}{|c|}{z_i}&\multicolumn{2}{|c|}{1}\\
\hline
\end{array}\]
\caption{Colored Boltzmann weights for two colors $c_i$ and $c_j$, portrayed as red and blue. We assume that {\red{red}} $>$ {\blue{blue}}.
If the configuration is not in the table, the weight is zero.
The weights are not quite symmetric in the colors, since in the $\tt{a}_2$
patterns, the smaller of the two involved colors (blue) is not allowed on the right edge
and the larger color is not allowed on the bottom edge. With our boundary conditions, the patterns with four
edges all red or blue could be omitted, but this would change the R-matrix
in Figure~\ref{coloredrmatrix}; see Remark~\ref{superrem}. This would not
affect the results of this paper, but we prefer these weights for consistency
with the uncolored case.}
\label{coloredweights}
\end{figure}

For example, let $r=3$. We will denote the three colors
$c_1$, $c_2$ and $c_3$ as $R$ (red), $B$ (blue) and $G$ (green)
in the figures. Take $w=s_1s_2$. Then $\mathbf{c}=(R,B,G)$
and $w\mathbf{c}=(G,R,B)$. With $\lambda=(2,1,0)$ the
system $\mathfrak{S}_{\mathbf{z},\lambda,w}$ has two states,
which are illustrated in Figure~\ref{tostates}.

\begin{figure}
\[
\begin{array}{|c|c|}
  \hline
  \begin{tikzpicture}[scale=0.8]
    \draw (1,0)--(1,6);
    \draw (3,0)--(3,6);
    \draw (5,0)--(5,6);
    \draw (7,0)--(7,6);
    \draw (9,0)--(9,6);
    \draw (0,1)--(10,1);
    \draw (0,3)--(10,3);
    \draw (0,5)--(10,5);
    \draw [dashed,thick,->] (.25,6) -- (9.75,6);
    \node at (0,6) {$\ell_0$};
    \draw [dashed,thick,->] (.25,4) -- (9.7,4) -- (9.7,5.75);
    \node at (0,4) {$\ell_1$};
    \draw [dashed,thick,->] (.25,2) -- (10,2) -- (10,5.75);
    \node at (0,2) {$\ell_2$};
    \draw [dashed,thick,->] (.25,0) -- (10.3,0) -- (10.3,5.75);
    \node at (0,0) {$\ell_3$};
    
    \draw [line width=0.5mm,red] (1.00,5.75) to [out=-90,in=180] (1.75,5.00);
    \draw [line width=0.5mm,red] (2.35,5.00)--(3.75,5.00);
    \draw [line width=0.5mm,blue] (5,4)--(5,6);
    \draw [line width=0.5mm,red] (4.35,5.00)--(5.75,5.00);
    \draw [line width=0.5mm,red] (6.25,5.00) to [out=0,in=90] (7.00,4.25);
    \draw [line width=0.5mm,darkgreen] (9.00,5.75) to [out=-90,in=180] (9.75,5.00);
    \draw [line width=0.5mm,blue] (5.00,3.75) to [out=-90,in=180] (5.75,3.00);
    \draw [line width=0.5mm,red] (7.00,3.75) to [out=-90,in=180] (7.75,3.00);
    \draw [line width=0.5mm,blue] (6.25,3.00) to [out=0,in=90] (7.00,2.25);
    \draw [line width=0.5mm,red] (8.35,3.00)--(9.75,3.00);
    \draw [line width=0.5mm,blue] (7.00,1.75) to [out=-90,in=180] (7.75,1.00);
    \draw [line width=0.5mm,blue] (8.35,1.00)--(9.75,1.00);
    \draw[line width=0.5mm,red,fill=white] (1,6) circle (.35);
    \draw[fill=white] (3,6) circle (.35);
    \draw[line width=0.5mm,blue,fill=white] (5,6) circle (.35);
    \draw[fill=white] (7,6) circle (.35);
    \draw[line width=0.5mm,darkgreen,fill=white] (9,6) circle (.35);
    \draw[fill=white] (1,4) circle (.35);
    \draw[fill=white] (3,4) circle (.35);
    \draw[line width=0.5mm,blue,fill=white] (5,4) circle (.35);
    \draw[line width=0.5mm,red,fill=white] (7,4) circle (.35);
    \draw[fill=white] (9,4) circle (.35);
    \draw[fill=white] (1,2) circle (.35);
    \draw[fill=white] (3,2) circle (.35);
    \draw[fill=white] (5,2) circle (.35);
    \draw[line width=0.5mm,blue,fill=white] (7,2) circle (.35);
    \draw[fill=white] (9,2) circle (.35);
    \draw[fill=white] (1,0) circle (.35);
    \draw[fill=white] (3,0) circle (.35);
    \draw[fill=white] (5,0) circle (.35);
    \draw[fill=white] (7,0) circle (.35);
    \draw[fill=white] (9,0) circle (.35);
    \draw[fill=white] (0,5) circle (.35);
    \draw[line width=0.5mm,red,fill=white] (2,5) circle (.35);
    \draw[line width=0.5mm,red,fill=white] (4,5) circle (.35);
    \draw[line width=0.5mm,red,fill=white] (6,5) circle (.35);
    \draw[fill=white] (8,5) circle (.35);
    \draw[line width=0.5mm,darkgreen,fill=white] (10,5) circle (.35);
    \draw[fill=white] (0,3) circle (.35);
    \draw[fill=white] (2,3) circle (.35);
    \draw[fill=white] (4,3) circle (.35);
    \draw[line width=0.5mm,blue,fill=white] (6,3) circle (.35);
    \draw[line width=0.5mm,red,fill=white] (8,3) circle (.35);
    \draw[line width=0.5mm,red,fill=white] (10,3) circle (.35);
    \draw[fill=white] (0,1) circle (.35);
    \draw[fill=white] (2,1) circle (.35);
    \draw[fill=white] (4,1) circle (.35);
    \draw[fill=white] (6,1) circle (.35);
    \draw[line width=0.5mm,blue,fill=white] (8,1) circle (.35);
    \draw[line width=0.5mm,blue,fill=white] (10,1) circle (.35);
    \node at (1,6) {$R$};
    \node at (3,6) {$+$};
    \node at (5,6) {$B$};
    \node at (7,6) {$+$};
    \node at (9,6) {$G$};
    \node at (1,4) {$+$};
    \node at (3,4) {$+$};
    \node at (5,4) {$B$};
    \node at (7,4) {$R$};
    \node at (9,4) {$+$};
    \node at (1,2) {$+$};
    \node at (3,2) {$+$};
    \node at (5,2) {$+$};
    \node at (7,2) {$B$};
    \node at (9,2) {$+$};
    \node at (1,0) {$+$};
    \node at (3,0) {$+$};
    \node at (5,0) {$+$};
    \node at (7,0) {$+$};
    \node at (9,0) {$+$};
    \node at (0,5) {$+$};
    \node at (2,5) {$R$};
    \node at (4,5) {$R$};
    \node at (6,5) {$R$};
    \node at (8,5) {$+$};
    \node at (10,5) {$G$};
    \node at (0,3) {$+$};
    \node at (2,3) {$+$};
    \node at (4,3) {$+$};
    \node at (6,3) {$B$};
    \node at (8,3) {$R$};
    \node at (10,3) {$R$};
    \node at (0,1) {$+$};
    \node at (2,1) {$+$};
    \node at (4,1) {$+$};
    \node at (6,1) {$+$};
    \node at (8,1) {$B$};
    \node at (10,1) {$B$};
    \node at (1.00,6.8) {$ 4$};
    \node at (3.00,6.8) {$ 3$};
    \node at (5.00,6.8) {$ 2$};
    \node at (7.00,6.8) {$ 1$};
    \node at (9.00,6.8) {$ 0$};
    \node at (-.75,5) {$ 1$};
    \node at (-.75,3) {$ 2$};
    \node at (-.75,1) {$ 3$};
    \node at (0,-.50) {$\quad$};
    \path[fill=white] (1,1) circle (.25);
    \node at (1,1) {\scriptsize\scriptsize$z_3$};
    \path[fill=white] (3,1) circle (.25);
    \node at (3,1) {\scriptsize$z_3$};
    \path[fill=white] (5,1) circle (.25);
    \node at (5,1) {\scriptsize$z_3$};
    \path[fill=white] (7,1) circle (.25);
    \node at (7,1) {\scriptsize$z_3$};
    \path[fill=white] (9,1) circle (.25);
    \node at (9,1) {\scriptsize$z_3$};

    \path[fill=white] (1,3) circle (.25);
    \node at (1,3) {\scriptsize$z_2$};
    \path[fill=white] (3,3) circle (.25);
    \node at (3,3) {\scriptsize$z_2$};
    \path[fill=white] (5,3) circle (.25);
    \node at (5,3) {\scriptsize$z_2$};
    \path[fill=white] (7,3) circle (.25);
    \node at (7,3) {\scriptsize$z_2$};
    \path[fill=white] (9,3) circle (.25);
    \node at (9,3) {\scriptsize$z_2$};

    \path[fill=white] (1,5) circle (.25);
    \node at (1,5) {\scriptsize$z_1$};
    \path[fill=white] (3,5) circle (.25);
    \node at (3,5) {\scriptsize$z_1$};
    \path[fill=white] (5,5) circle (.25);
    \node at (5,5) {\scriptsize$z_1$};
    \path[fill=white] (7,5) circle (.25);
    \node at (7,5) {\scriptsize$z_1$};
    \path[fill=white] (9,5) circle (.25);
    \node at (9,5) {\scriptsize$z_1$};
  \end{tikzpicture} &
  \raisebox{80pt}{$\begin{array}{c}\GTP^\circ=\left\{\begin{array}{cccccc}2&&1&&0\\&1&&1\\&&1\end{array}\right\}\\\qquad\\
   \mathfrak{T}=\begin{ytableau}1&3\\2\end{ytableau}\\\\
   \mathbf{c}_0=(R,B,G)\\
   \mathbf{c}_1=(B,R,G)\\
   \mathbf{c}_2=(B,R,G)\\
   \mathbf{c}_3=(B,R,G)\end{array}$}\\
  \hline
  \begin{tikzpicture}[scale=0.8]
    \draw (1,0)--(1,6);
    \draw (3,0)--(3,6);
    \draw (5,0)--(5,6);
    \draw (7,0)--(7,6);
    \draw (9,0)--(9,6);
    \draw (0,1)--(10,1);
    \draw (0,3)--(10,3);
    \draw (0,5)--(10,5);
    \draw [dashed,thick,->] (.25,6) -- (9.75,6);
    \node at (0,6) {$\ell_0$};
    \draw [dashed,thick,->] (.25,4) -- (9.7,4) -- (9.7,5.75);
    \node at (0,4) {$\ell_1$};
    \draw [dashed,thick,->] (.25,2) -- (10,2) -- (10,5.75);
    \node at (0,2) {$\ell_2$};
    \draw [dashed,thick,->] (.25,0) -- (10.3,0) -- (10.3,5.75);
    \node at (0,0) {$\ell_3$};
    \draw [line width=0.5mm,red] (1.00,5.75) to [out=-90,in=180] (1.75,5.00);
    \draw [line width=0.5mm,red] (2.25,5.00) to [out=0,in=90] (3.00,4.25);
    \draw [line width=0.5mm,blue] (5.00,5.75) to [out=-90,in=180] (5.75,5.00);
    \draw [line width=0.5mm,blue] (6.25,5.00) to [out=0,in=90] (7.00,4.25);
    \draw [line width=0.5mm,darkgreen] (9.00,5.75) to [out=-90,in=180] (9.75,5.00);
    \draw [line width=0.5mm,red] (3.00,3.75) to [out=-90,in=180] (3.75,3.00);
    \draw [line width=0.5mm,red] (4.35,3.00)--(5.75,3.00);
    \draw [line width=0.5mm,blue] (7,2)--(7,4);
    \draw [line width=0.5mm,red] (6.35,3.00)--(7.75,3.00);
    \draw [line width=0.5mm,red] (8.35,3.00)--(9.75,3.00);
    \draw [line width=0.5mm,blue] (7.00,1.75) to [out=-90,in=180] (7.75,1.00);
    \draw [line width=0.5mm,blue] (8.35,1.00)--(9.75,1.00);
    \draw[line width=0.5mm,red,fill=white] (1,6) circle (.35);
    \draw[fill=white] (3,6) circle (.35);
    \draw[line width=0.5mm,blue,fill=white] (5,6) circle (.35);
    \draw[fill=white] (7,6) circle (.35);
    \draw[line width=0.5mm,darkgreen,fill=white] (9,6) circle (.35);
    \draw[fill=white] (1,4) circle (.35);
    \draw[line width=0.5mm,red,fill=white] (3,4) circle (.35);
    \draw[fill=white] (5,4) circle (.35);
    \draw[line width=0.5mm,blue,fill=white] (7,4) circle (.35);
    \draw[fill=white] (9,4) circle (.35);
    \draw[fill=white] (1,2) circle (.35);
    \draw[fill=white] (3,2) circle (.35);
    \draw[fill=white] (5,2) circle (.35);
    \draw[line width=0.5mm,blue,fill=white] (7,2) circle (.35);
    \draw[fill=white] (9,2) circle (.35);
    \draw[fill=white] (1,0) circle (.35);
    \draw[fill=white] (3,0) circle (.35);
    \draw[fill=white] (5,0) circle (.35);
    \draw[fill=white] (7,0) circle (.35);
    \draw[fill=white] (9,0) circle (.35);
    \draw[fill=white] (0,5) circle (.35);
    \draw[line width=0.5mm,red,fill=white] (2,5) circle (.35);
    \draw[fill=white] (4,5) circle (.35);
    \draw[line width=0.5mm,blue,fill=white] (6,5) circle (.35);
    \draw[fill=white] (8,5) circle (.35);
    \draw[line width=0.5mm,darkgreen,fill=white] (10,5) circle (.35);
    \draw[fill=white] (0,3) circle (.35);
    \draw[fill=white] (2,3) circle (.35);
    \draw[line width=0.5mm,red,fill=white] (4,3) circle (.35);
    \draw[line width=0.5mm,red,fill=white] (6,3) circle (.35);
    \draw[line width=0.5mm,red,fill=white] (8,3) circle (.35);
    \draw[line width=0.5mm,red,fill=white] (10,3) circle (.35);
    \draw[fill=white] (0,1) circle (.35);
    \draw[fill=white] (2,1) circle (.35);
    \draw[fill=white] (4,1) circle (.35);
    \draw[fill=white] (6,1) circle (.35);
    \draw[line width=0.5mm,blue,fill=white] (8,1) circle (.35);
    \draw[line width=0.5mm,blue,fill=white] (10,1) circle (.35);
    \node at (1,6) {$R$};
    \node at (3,6) {$+$};
    \node at (5,6) {$B$};
    \node at (7,6) {$+$};
    \node at (9,6) {$G$};
    \node at (1,4) {$+$};
    \node at (3,4) {$R$};
    \node at (5,4) {$+$};
    \node at (7,4) {$B$};
    \node at (9,4) {$+$};
    \node at (1,2) {$+$};
    \node at (3,2) {$+$};
    \node at (5,2) {$+$};
    \node at (7,2) {$B$};
    \node at (9,2) {$+$};
    \node at (1,0) {$+$};
    \node at (3,0) {$+$};
    \node at (5,0) {$+$};
    \node at (7,0) {$+$};
    \node at (9,0) {$+$};
    \node at (0,5) {$+$};
    \node at (2,5) {$R$};
    \node at (4,5) {$+$};
    \node at (6,5) {$B$};
    \node at (8,5) {$+$};
    \node at (10,5) {$G$};
    \node at (0,3) {$+$};
    \node at (2,3) {$+$};
    \node at (4,3) {$R$};
    \node at (6,3) {$R$};
    \node at (8,3) {$R$};
    \node at (10,3) {$R$};
    \node at (0,1) {$+$};
    \node at (2,1) {$+$};
    \node at (4,1) {$+$};
    \node at (6,1) {$+$};
    \node at (8,1) {$B$};
    \node at (10,1) {$B$};
    \node at (1.00,6.8) {$4$};
    \node at (3.00,6.8) {$3$};
    \node at (5.00,6.8) {$2$};
    \node at (7.00,6.8) {$1$};
    \node at (9.00,6.8) {$0$};
    \node at (-.75,5) {$ 1$};
    \node at (-.75,3) {$ 2$};
    \node at (-.75,1) {$ 3$};
    \path[fill=white] (1,1) circle (.25);
    \node at (1,1) {\scriptsize$z_3$};
    \path[fill=white] (3,1) circle (.25);
    \node at (3,1) {\scriptsize$z_3$};
    \path[fill=white] (5,1) circle (.25);
    \node at (5,1) {\scriptsize$z_3$};
    \path[fill=white] (7,1) circle (.25);
    \node at (7,1) {\scriptsize$z_3$};
    \path[fill=white] (9,1) circle (.25);
    \node at (9,1) {\scriptsize$z_3$};

    \path[fill=white] (1,3) circle (.25);
    \node at (1,3) {\scriptsize$z_2$};
    \path[fill=white] (3,3) circle (.25);
    \node at (3,3) {\scriptsize$z_2$};
    \path[fill=white] (5,3) circle (.25);
    \node at (5,3) {\scriptsize$z_2$};
    \path[fill=white] (7,3) circle (.25);
    \node at (7,3) {\scriptsize$z_2$};
    \path[fill=white] (9,3) circle (.25);
    \node at (9,3) {\scriptsize$z_2$};

    \path[fill=white] (1,5) circle (.25);
    \node at (1,5) {\scriptsize$z_1$};
    \path[fill=white] (3,5) circle (.25);
    \node at (3,5) {\scriptsize$z_1$};
    \path[fill=white] (5,5) circle (.25);
    \node at (5,5) {\scriptsize$z_1$};
    \path[fill=white] (7,5) circle (.25);
    \node at (7,5) {\scriptsize$z_1$};
    \path[fill=white] (9,5) circle (.25);
    \node at (9,5) {\scriptsize$z_1$};

    \node at (0,-.50) {$\quad$};
  \end{tikzpicture}&
  \raisebox{80pt}{$\begin{array}{c}\GTP^\circ=\left\{\begin{array}{cccccc}2&&1&&0\\&2&&1\\&&1\end{array}\right\}\\\qquad\\
   \mathfrak{T}=\begin{ytableau}1&2\\2\end{ytableau}\\\\
   \mathbf{c}_0=(R,B,G)\\
   \mathbf{c}_1=(R,B,G)\\
   \mathbf{c}_2=(B,R,G)\\
   \mathbf{c}_3=(B,R,G)\end{array}$}\\
  \hline
\end{array}
\]
\caption{The two states of the system $\mathfrak{S}_{\mathbf{z},(2,1,0),s_1s_2}$
where $\mathbf{c}=(R,B,G)$ (red, blue, green) and $w\mathbf{c} = s_1s_2\mathbf{c}=(G,R,B)$.
The dashed lines $\ell_i$, and the intermediate flags $\mathbf{c}_i$ will
be used in the proof of Theorem~\ref{crystalmt}. Each intermediate
flag $\mathbf{c}_i$ is the sequence of colors through the line $\ell_i$,
and is obtained from the previous $\mathbf{c}_{i-1}$ by interchanging
some colors on the vertical edges that intersect it. Because $\ell_{r-1}$
only intersects one vertical edge, no interchanges are possible at the
last step, meaning that $\mathbf{c}_{r-1}=\mathbf{c}_r$. 
Note that, while the flag $w\mathbf{c} = s_1s_2\mathbf{c}=(G,R,B)$ denoting the right boundary condition is read from the top down, the last line $\ell_3$ intersects the same edges from the bottom
up.
Thus, $\mathbf{c}_3=w_0s_1s_2\mathbf{c} = (B,R,G)$. }
\label{tostates}
\end{figure}

\begin{proposition} \label{statedpf} 
For any dominant weight $\lambda$, $\mathfrak{S}_{\mathbf{z},\lambda} = \bigsqcup_{w \in W} \mathfrak{S}_{\mathbf{z},\lambda,w}$
(disjoint union) where a colored spin $c_i$ is mapped to spin $-$, and hence
  \[Z(\mathfrak{S}_{\mathbf{z},\lambda})=\sum_{w\in W}Z(\mathfrak{S}_{\mathbf{z},\lambda,w}).\]
\end{proposition}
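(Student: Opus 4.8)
The plan is to realize Proposition~\ref{statedpf} through the color-forgetting map $\Phi$ that sends every colored spin $c_i$ to the spin $-$ and fixes $+$, and to show that $\Phi$ restricts to a weight-preserving bijection from $\bigsqcup_{w\in W}\mathfrak{S}_{\mathbf{z},\lambda,w}$ onto $\mathfrak{S}_{\mathbf{z},\lambda}$. Once this is established, the partition function identity is immediate: summing $\beta$ over the fibers of $\Phi$ and using weight preservation gives $Z(\mathfrak{S}_{\mathbf{z},\lambda})=\sum_{w\in W}Z(\mathfrak{S}_{\mathbf{z},\lambda,w})$.

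First I would check that $\Phi$ is well-defined and preserves Boltzmann weights vertex by vertex. Comparing the colored weights of Figure~\ref{coloredweights} with the uncolored weights of Figure~\ref{uncoloredbw}, every admissible colored configuration maps to an admissible uncolored configuration of the same type $\mathtt{a}_1,\mathtt{a}_2,\mathtt{b}_2,\mathtt{c}_1,\mathtt{c}_2$, carrying the same weight in the $i$-th row; in particular no admissible colored vertex maps to a $\mathtt{b}_1$ pattern. The colored boundary data (color $c_i$ on the top edge in column $\lambda_i+r-i$, the flag $w\mathbf{c}$ down the right boundary, and $+$ elsewhere) maps to precisely the uncolored boundary data for $\lambda$. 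Hence $\Phi$ sends each state of $\mathfrak{S}_{\mathbf{z},\lambda,w}$ to a state of $\mathfrak{S}_{\mathbf{z},\lambda}$ with identical $\beta$.

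The heart of the matter is that $\Phi$ is a bijection, which I would prove by exhibiting, for each uncolored state $\mathfrak{s}$, a \emph{unique} admissible coloring. Since all $-$ spins must be colored and a color is carried along a path moving only rightward and downward, I would propagate colors from the top boundary, processing each vertex after those feeding its upstream (top and left) edges, and reading the right and bottom edges as outputs. The key local claim is that the uncolored type of a vertex together with the colors on its input edges uniquely determines admissible output colors. This is clear for $\mathtt{a}_1,\mathtt{b}_2,\mathtt{c}_1,\mathtt{c}_2$, where at most one color is simply carried along its path. At an $\mathtt{a}_2$ vertex both inputs carry colors $c,c'$, which are necessarily distinct because each color labels a single path; then the crossing pattern applies when $c>c'$ (larger color horizontal, smaller vertical) and the turning pattern when $c<c'$, each fixing the outputs, while the monochromatic $\mathtt{a}_2$ patterns never arise under these boundary conditions.

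Finally I would assemble these local facts globally. Because each vertex has an even number of incident $-$ edges, the $-$ spins of $\mathfrak{s}$ organize into paths that terminate only on the boundary; as the left and bottom boundaries are entirely $+$, every path enters at a top edge and exits at a right edge. Thus propagation colors every $-$ edge, and since each of the $r$ colors enters once and exits once, reading the exit colors down the right boundary gives a permutation of $\mathbf{c}$, that is, a unique $w\in W=S_r$ with right boundary $w\mathbf{c}$. Consequently each $\mathfrak{s}$ has exactly one admissible coloring, lying in exactly one $\mathfrak{S}_{\mathbf{z},\lambda,w}$, which yields surjectivity, injectivity, and disjointness of the union at once, and the weight identity follows. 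I expect the main obstacle to be the $\mathtt{a}_2$ case of the local determinacy claim: verifying that the asymmetric color conventions for crossings versus turns make the outgoing colors both well-defined and admissible for every ordered pair of distinct input colors. Once that is pinned down, the global bijection is a routine propagation argument.
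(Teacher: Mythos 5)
Your proposal is correct and takes essentially the same route as the paper's own proof: the paper likewise produces the inverse of the color-forgetting map by propagating colors deterministically from the top boundary, rightwards and downwards row by row, using conservation of colored spin together with the rule that at an $\mathtt{a}_2$ vertex the smaller of the two (necessarily distinct) input colors must exit on the bottom edge, and then notes that forgetting colors carries the colored Boltzmann weights to the uncolored ones. Your additional observations (distinctness of the two input colors, and reading off the unique $w$ from the exit colors on the right boundary) simply make explicit steps that the paper leaves implicit.
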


\begin{proof}
  We may begin with a state of the uncolored system and assign colors to the
  edges with $-$ spins. Along the top row, assign color $c_i$ to
  the $-$ spin in column $\lambda_i+r-i$ as directed for colored ice states.
  We will argue that there is a unique way of coloring the remaining
  $-$~spins that is consistent with the configurations in Figure~\ref{coloredweights}.

  The boundary spins on the left
  edge are all $+$, so they do not need colors assigned.
  After this, we proceed inductively, rightwards and downwards row by row, adding color to the $-$~spins of the state using the weights from Figure~\ref{coloredweights}.
  The key observation is that at a vertex labeled as follows:
  \[\gammaice{a}{b}{c}{d} \]
  the colored spins $a$ and $b$ and the spins $\pm$
  of $c$ and $d$ determine a unique color at $c$ and $d$ with non-zero weight
  according to Figure~\ref{coloredweights}. Indeed, colored spin is conserved at a
  vertex, meaning that the total incoming (top and left) colored spins counted
  with multiplicity equals the total outgoing (bottom and right) colored
  spins. Moreover for the $\tt{a}_2$ configurations if $a$ and $b$ are of
  different colors, then $d$ will be the smaller of the two colors.
  We see that the assignment of colors is completely deterministic, and the
  colored state falls into a unique one of the ensembles
  $\mathfrak{S}_{\mathbf{z},\lambda,w}$.

  Now mapping colored spins $c_i$ to spin $-$, the colored Boltzmann weights of
  Figure~\ref{coloredweights} map to the uncolored Boltzmann weights of
  Figures~\ref{uncoloredbw}, thus proving both statements.
\end{proof}

There is again a Yang-Baxter equation.

\begin{theorem} \label{coloredybethm}
  Using the Boltzmann weights in Figure~\ref{coloredweights} for the regular vertices and the R-matrix
  in Figure~\ref{coloredrmatrix} for the rotated vertices, let $a,b,c,d,e,f$ be colored spins.
  Then the partition functions of the (now colored) systems depicted in \eqref{eqn:ybe} are equal.
\end{theorem}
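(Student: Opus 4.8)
The plan is to prove this by the standard local verification that establishes any Yang--Baxter equation. Since the exterior colored spins $a,b,c,d,e,f$ are fixed, it suffices to check, for each admissible choice of these six spins, that summing the Boltzmann weights over all assignments of the interior edges ($g,h,i$ on the left system of \eqref{eqn:ybe} and $j,k,l$ on the right) produces the same polynomial in $\mathbf{z}$. Geometrically the two systems differ only in whether the braiding $R$-vertex sits to the left of or to the right of the vertical line, so the assertion is that one may slide the crossing past that line without changing the partition function. The first organizing principle is \emph{colored spin conservation}, already used in the proof of Proposition~\ref{statedpf}: at every admissible vertex, and at every $R$-vertex of Figure~\ref{coloredrmatrix}, the multiset of non-$+$ colored spins entering from the top and left equals the multiset leaving through the bottom and right. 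Applied globally, both partition functions vanish unless the incoming multiset $\{a,b,c\}$ agrees with the outgoing multiset $\{d,e,f\}$, which discards almost all boundaries immediately; and since only three edges carry incoming data, at most three distinct colors can ever appear.

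The second reduction is that the weights in Figures~\ref{coloredweights} and~\ref{coloredrmatrix} depend on the colors only through their relative order---whether two occupied edges carry equal colors and, if not, which color is larger. Hence every surviving boundary may be normalized so that the colors appearing form an initial segment of $c_1 > c_2 > \cdots$, and it is enough to treat boundaries in which the occupied strands realize one, two, or three distinct colors. The monochromatic case is not new: when every non-$+$ spin among $a,\dots,f$ carries a single common color, that color behaves exactly like a bare $-$ spin, the colored weights collapse to the uncolored weights of Figure~\ref{uncoloredbw} and the uncolored $R$-matrix of Figure~\ref{crystalrmatrix}, and the desired equality is precisely Theorem~\ref{thm:ybe}. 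Thus only the genuinely two- and three-colored boundaries demand fresh computation.

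For those I would carry out the enumeration directly. For each fixed $(a,\dots,f)$, conservation constrains the interior colored spins to a short list of possibilities; one writes down the admissible interior states on each side, reads off their weights from the two tables, and verifies that the two sums coincide. The boundary $(a,b,c,d,e,f)=(+,-,+,-,+,+)$ computed just before the statement serves as a template: there the left system contributes $z_iz_j$ from the single admissible state $(g,h,i)=(-,+,+)$, while the right system contributes $z_j^2 + z_j(z_i-z_j) = z_iz_j$ from its two admissible states, so the two agree. The multicolored checks have the same shape, now with colors carried along through the braiding and reordering.

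The main obstacle will be the bookkeeping in the genuinely multicolored cases, where two differently colored strands cross and then interact with the vertical line. Here the asymmetry of the $\tt{a}_2$ weights is decisive---recall from Figure~\ref{coloredweights} that in an $\tt{a}_2$ vertex the smaller color is forbidden on the right edge and the larger color on the bottom edge---and it must be reconciled with the color-swapping entries and the $z_i-z_j$ entry of the $R$-matrix. Correctly listing the admissible interior states, especially those in which two colors braid and must then reorder consistently on both sides of the vertical line, is the crux of the argument; once the admissible states are identified, the resulting weight identities are elementary, and the finite enumeration is entirely mechanical.
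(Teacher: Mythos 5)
Your proposal follows essentially the same route as the paper: both reduce the Yang--Baxter identity to a finite, $r$-independent family of local boundary cases via colored-spin conservation and the observation that the Boltzmann weights depend only on the relative order of the colors involved, and then dispose of those cases by exhaustive verification. The only difference is in how the final finite check is executed --- the paper verifies all $4^6=4096$ boundary configurations by computer (in Sage), whereas you propose a hand enumeration organized by the number of distinct colors, reducing the monochromatic case to Theorem~\ref{thm:ybe}; this is an organizational rather than a structural difference.
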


\begin{proof}
  In order for either side of (\ref{eqn:ybe}) to be nonzero, each
  color that appears on a boundary edge $a,b,c,d,e,f$ must
  appear an even number of times (and therefore at least twice), since otherwise according to Figures~\ref{coloredweights} and~\ref{coloredrmatrix},
  the Boltzmann weight of the state is zero.
  Therefore at most 3 colors can appear among
  $a,b,c,d,e,f$ and the interior edges cannot involve any further
  colors. Thus there are only a fixed finite number ($4^6=4096$) of cases to be
  considered (independent of the number of colors $r$), and this can easily be checked using a computer.
  (To check this we used the Sage mathematical software.)
\end{proof}

\begin{figure}[h]
\[\begin{array}{|c|c|c|c|c|c|}
  \hline
\begin{tikzpicture}[scale=0.7]
\draw (0,0) to [out = 0, in = 180] (2,2);
\draw (0,2) to [out = 0, in = 180] (2,0);
\draw[fill=white] (0,0) circle (.35);
\draw[fill=white] (0,2) circle (.35);
\draw[fill=white] (2,0) circle (.35);
\draw[fill=white] (2,2) circle (.35);
\node at (0,0) {$+$};
\node at (0,2) {$+$};
\node at (2,2) {$+$};
\node at (2,0) {$+$};
\path[fill=white] (1,1) circle (.3);
\node at (1,1) {$R_{z_i,z_j}$};
\end{tikzpicture}&
\begin{tikzpicture}[scale=0.7]
\draw (0,0) to [out = 0, in = 180] (2,2);
\draw (0,2) to [out = 0, in = 180] (2,0);
\draw[fill=white] (0,0) circle (.35);
\draw[line width=0.5mm, blue, fill=white] (0,2) circle (.35);
\draw[line width=0.5mm, blue, fill=white] (2,2) circle (.35);
\draw[fill=white] (2,0) circle (.35);
\node at (0,0) {$+$};
\node at (0,2) {$B$};
\node at (2,2) {$B$};
\node at (2,0) {$+$};
\path[fill=white] (1,1) circle (.3);
\node at (1,1) {$R_{z_i,z_j}$};
\end{tikzpicture}&
\begin{tikzpicture}[scale=0.7]
\draw (0,0) to [out = 0, in = 180] (2,2);
\draw (0,2) to [out = 0, in = 180] (2,0);
\draw[fill=white] (0,0) circle (.35);
\draw[line width=0.5mm, red, fill=white] (0,2) circle (.35);
\draw[line width=0.5mm, red, fill=white] (2,2) circle (.35);
\draw[fill=white] (2,0) circle (.35);
\node at (0,0) {$+$};
\node at (0,2) {$R$};
\node at (2,2) {$R$};
\node at (2,0) {$+$};
\path[fill=white] (1,1) circle (.3);
\node at (1,1) {$R_{z_i,z_j}$};
\end{tikzpicture}&
\begin{tikzpicture}[scale=0.7]
\draw (0,0) to [out = 0, in = 180] (2,2);
\draw (0,2) to [out = 0, in = 180] (2,0);
\draw[line width=0.5mm, blue, fill=white] (0,0) circle (.35);
\draw[fill=white] (0,2) circle (.35);
\draw[fill=white] (2,2) circle (.35);
\draw[line width=0.5mm, blue, fill=white] (2,0) circle (.35);
\node at (0,0) {$B$};
\node at (0,2) {$+$};
\node at (2,2) {$+$};
\node at (2,0) {$B$};
\path[fill=white] (1,1) circle (.3);
\node at (1,1) {$R_{z_i,z_j}$};
\end{tikzpicture}&
\begin{tikzpicture}[scale=0.7]
\draw (0,0) to [out = 0, in = 180] (2,2);
\draw (0,2) to [out = 0, in = 180] (2,0);
\draw[line width=0.5mm, red, fill=white] (0,0) circle (.35);
\draw[fill=white] (0,2) circle (.35);
\draw[fill=white] (2,2) circle (.35);
\draw[line width=0.5mm, red, fill=white] (2,0) circle (.35);
\node at (0,0) {$R$};
\node at (0,2) {$+$};
\node at (2,2) {$+$};
\node at (2,0) {$R$};
\path[fill=white] (1,1) circle (.3);
\node at (1,1) {$R_{z_i,z_j}$};
\end{tikzpicture}&
\begin{tikzpicture}[scale=0.7]
\draw (0,0) to [out = 0, in = 180] (2,2);
\draw (0,2) to [out = 0, in = 180] (2,0);
\draw[line width=0.5mm, blue, fill=white] (0,0) circle (.35);
\draw[fill=white] (0,2) circle (.35);
\draw[line width=0.5mm, blue, fill=white] (2,2) circle (.35);
\draw[fill=white] (2,0) circle (.35);
\node at (0,0) {$B$};
\node at (0,2) {$+$};
\node at (2,2) {$B$};
\node at (2,0) {$+$};
\path[fill=white] (1,1) circle (.3);
\node at (1,1) {$R_{z_i,z_j}$};
\end{tikzpicture}\\
   \hline
z_j & z_j & z_j & z_i & z_i & z_i - z_j\\
   \hline
   \hline
\begin{tikzpicture}[scale=0.7]
\draw (0,0) to [out = 0, in = 180] (2,2);
\draw (0,2) to [out = 0, in = 180] (2,0);
\draw[line width=0.5mm, red, fill=white] (0,0) circle (.35);
\draw[fill=white] (0,2) circle (.35);
\draw[line width=0.5mm, red, fill=white] (2,2) circle (.35);
\draw[fill=white] (2,0) circle (.35);
\node at (0,0) {$R$};
\node at (0,2) {$+$};
\node at (2,2) {$R$};
\node at (2,0) {$+$};
\path[fill=white] (1,1) circle (.3);
\node at (1,1) {$R_{z_i,z_j}$};
\end{tikzpicture}&
\begin{tikzpicture}[scale=0.7]
\draw (0,0) to [out = 0, in = 180] (2,2);
\draw (0,2) to [out = 0, in = 180] (2,0);
\draw[line width=0.5mm, blue, fill=white] (0,0) circle (.35);
\draw[line width=0.5mm, blue, fill=white] (0,2) circle (.35);
\draw[line width=0.5mm, blue, fill=white] (2,2) circle (.35);
\draw[line width=0.5mm, blue, fill=white] (2,0) circle (.35);
\node at (0,0) {$B$};
\node at (0,2) {$B$};
\node at (2,2) {$B$};
\node at (2,0) {$B$};
\path[fill=white] (1,1) circle (.3);
\node at (1,1) {$R_{z_i,z_j}$};
\end{tikzpicture}&
\begin{tikzpicture}[scale=0.7]
\draw (0,0) to [out = 0, in = 180] (2,2);
\draw (0,2) to [out = 0, in = 180] (2,0);
\draw[line width=0.5mm, blue, fill=white] (0,0) circle (.35);
\draw[line width=0.5mm, red, fill=white] (0,2) circle (.35);
\draw[line width=0.5mm, red, fill=white] (2,2) circle (.35);
\draw[line width=0.5mm, blue, fill=white] (2,0) circle (.35);
\node at (0,0) {$B$};
\node at (0,2) {$R$};
\node at (2,2) {$R$};
\node at (2,0) {$B$};
\path[fill=white] (1,1) circle (.3);
\node at (1,1) {$R_{z_i,z_j}$};
\end{tikzpicture}&
\begin{tikzpicture}[scale=0.7]
\draw (0,0) to [out = 0, in = 180] (2,2);
\draw (0,2) to [out = 0, in = 180] (2,0);
\draw[line width=0.5mm, red, fill=white] (0,0) circle (.35);
\draw[line width=0.5mm, blue, fill=white] (0,2) circle (.35);
\draw[line width=0.5mm, blue, fill=white] (2,2) circle (.35);
\draw[line width=0.5mm, red, fill=white] (2,0) circle (.35);
\node at (0,0) {$R$};
\node at (0,2) {$B$};
\node at (2,2) {$B$};
\node at (2,0) {$R$};
\path[fill=white] (1,1) circle (.3);
\node at (1,1) {$R_{z_i,z_j}$};
\end{tikzpicture}&
\begin{tikzpicture}[scale=0.7]
\draw (0,0) to [out = 0, in = 180] (2,2);
\draw (0,2) to [out = 0, in = 180] (2,0);
\draw[line width=0.5mm, red, fill=white] (0,0) circle (.35);
\draw[line width=0.5mm, blue, fill=white] (0,2) circle (.35);
\draw[line width=0.5mm, red, fill=white] (2,2) circle (.35);
\draw[line width=0.5mm, blue, fill=white] (2,0) circle (.35);
\node at (0,0) {$R$};
\node at (0,2) {$B$};
\node at (2,2) {$R$};
\node at (2,0) {$B$};
\path[fill=white] (1,1) circle (.3);
\node at (1,1) {$R_{z_i,z_j}$};
\end{tikzpicture}&
\begin{tikzpicture}[scale=0.7]
\draw (0,0) to [out = 0, in = 180] (2,2);
\draw (0,2) to [out = 0, in = 180] (2,0);
\draw[line width=0.5mm, red, fill=white] (0,0) circle (.35);
\draw[line width=0.5mm, red, fill=white] (0,2) circle (.35);
\draw[line width=0.5mm, red, fill=white] (2,2) circle (.35);
\draw[line width=0.5mm, red, fill=white] (2,0) circle (.35);
\node at (0,0) {$R$};
\node at (0,2) {$R$};
\node at (2,2) {$R$};
\node at (2,0) {$R$};
\path[fill=white] (1,1) circle (.3);
\node at (1,1) {$R_{z_i,z_j}$};
\end{tikzpicture}\\
   \hline
z_i-z_j & z_i & z_i & z_j & z_i-z_j & z_i\\
   \hline
\end{array}
\]
\caption{The colored R-matrix.}
\label{coloredrmatrix}
\end{figure}

\begin{figure}[h]
\vspace{5mm}
\begin{tikzpicture}[scale=0.75, font=\small]
    \draw (1,0)--(1,6);
    \draw (3,0)--(3,6);
    \draw (5,0)--(5,6);
    \draw (7,0)--(7,6);
    \draw (9,0)--(9,6);
    \draw (11,0)--(11,6);
    \draw (13,0)--(13,6);
    \draw (15,0)--(15,6);
    \draw (0,1)--(16,1);
    \draw (0,3)--(16,3);
    \draw (0,5)--(16,5);
    \draw [line width=0.5mm,red] (1.00,5.75) to [out=-90,in=180] (1.75,5.00);
    \draw [line width=0.5mm,red] (2.35,5.00)--(3.75,5.00);
    \draw [line width=0.5mm,red] (4.35,5.00)--(5.75,5.00);
    \draw [line width=0.5mm,red] (6.35,5.00)--(7.75,5.00);
    \draw [line width=0.5mm,blue] (9,4)--(9,6);
    \draw [line width=0.5mm,red] (8.35,5.00)--(9.75,5.00);
    \draw [line width=0.5mm,red] (10.35,5.00)--(11.75,5.00);
    \draw [line width=0.5mm,red] (12.35,5.00)--(13.75,5.00);
    \draw [line width=0.5mm,darkgreen] (15,4)--(15,6);
    \draw [line width=0.5mm,red] (14.35,5.00)--(15.75,5.00);
    \draw [line width=0.5mm,blue] (9.00,3.75) to [out=-90,in=180] (9.75,3.00);
    \draw [line width=0.5mm,blue] (10.35,3.00)--(11.75,3.00);
    \draw [line width=0.5mm,blue] (12.35,3.00)--(13.75,3.00);
    \draw [line width=0.5mm,darkgreen] (15,2)--(15,4);
    \draw [line width=0.5mm,blue] (14.35,3.00)--(15.75,3.00);
    \draw [line width=0.5mm,darkgreen] (15.00,1.75) to [out=-90,in=180] (15.75,1.00);
    \draw[line width=0.5mm,red,fill=white] (1,6) circle (.35);
    \draw[fill=white] (3,6) circle (.35);
    \draw[fill=white] (5,6) circle (.35);
    \draw[fill=white] (7,6) circle (.35);
    \draw[line width=0.5mm,blue,fill=white] (9,6) circle (.35);
    \draw[fill=white] (11,6) circle (.35);
    \draw[fill=white] (13,6) circle (.35);
    \draw[line width=0.5mm,darkgreen,fill=white] (15,6) circle (.35);
    \draw[fill=white] (1,4) circle (.35);
    \draw[fill=white] (3,4) circle (.35);
    \draw[fill=white] (5,4) circle (.35);
    \draw[fill=white] (7,4) circle (.35);
    \draw[line width=0.5mm,blue,fill=white] (9,4) circle (.35);
    \draw[fill=white] (11,4) circle (.35);
    \draw[fill=white] (13,4) circle (.35);
    \draw[line width=0.5mm,darkgreen,fill=white] (15,4) circle (.35);
    \draw[fill=white] (1,2) circle (.35);
    \draw[fill=white] (3,2) circle (.35);
    \draw[fill=white] (5,2) circle (.35);
    \draw[fill=white] (7,2) circle (.35);
    \draw[fill=white] (9,2) circle (.35);
    \draw[fill=white] (11,2) circle (.35);
    \draw[fill=white] (13,2) circle (.35);
    \draw[line width=0.5mm,darkgreen,fill=white] (15,2) circle (.35);
    \draw[fill=white] (1,0) circle (.35);
    \draw[fill=white] (3,0) circle (.35);
    \draw[fill=white] (5,0) circle (.35);
    \draw[fill=white] (7,0) circle (.35);
    \draw[fill=white] (9,0) circle (.35);
    \draw[fill=white] (11,0) circle (.35);
    \draw[fill=white] (13,0) circle (.35);
    \draw[fill=white] (15,0) circle (.35);
    \draw[fill=white] (0,5) circle (.35);
    \draw[line width=0.5mm,red,fill=white] (2,5) circle (.35);
    \draw[line width=0.5mm,red,fill=white] (4,5) circle (.35);
    \draw[line width=0.5mm,red,fill=white] (6,5) circle (.35);
    \draw[line width=0.5mm,red,fill=white] (8,5) circle (.35);
    \draw[line width=0.5mm,red,fill=white] (10,5) circle (.35);
    \draw[line width=0.5mm,red,fill=white] (12,5) circle (.35);
    \draw[line width=0.5mm,red,fill=white] (14,5) circle (.35);
    \draw[line width=0.5mm,red,fill=white] (16,5) circle (.35);
    \draw[fill=white] (0,3) circle (.35);
    \draw[fill=white] (2,3) circle (.35);
    \draw[fill=white] (4,3) circle (.35);
    \draw[fill=white] (6,3) circle (.35);
    \draw[fill=white] (8,3) circle (.35);
    \draw[line width=0.5mm,blue,fill=white] (10,3) circle (.35);
    \draw[line width=0.5mm,blue,fill=white] (12,3) circle (.35);
    \draw[line width=0.5mm,blue,fill=white] (14,3) circle (.35);
    \draw[line width=0.5mm,blue,fill=white] (16,3) circle (.35);
    \draw[fill=white] (0,1) circle (.35);
    \draw[fill=white] (2,1) circle (.35);
    \draw[fill=white] (4,1) circle (.35);
    \draw[fill=white] (6,1) circle (.35);
    \draw[fill=white] (8,1) circle (.35);
    \draw[fill=white] (10,1) circle (.35);
    \draw[fill=white] (12,1) circle (.35);
    \draw[fill=white] (14,1) circle (.35);
    \draw[line width=0.5mm,darkgreen,fill=white] (16,1) circle (.35);
    \node at (1,6) {$R$};
    \node at (3,6) {$+$};
    \node at (5,6) {$+$};
    \node at (7,6) {$+$};
    \node at (9,6) {$B$};
    \node at (11,6) {$+$};
    \node at (13,6) {$+$};
    \node at (15,6) {$G$};
    \node at (1,4) {$+$};
    \node at (3,4) {$+$};
    \node at (5,4) {$+$};
    \node at (7,4) {$+$};
    \node at (9,4) {$B$};
    \node at (11,4) {$+$};
    \node at (13,4) {$+$};
    \node at (15,4) {$G$};
    \node at (1,2) {$+$};
    \node at (3,2) {$+$};
    \node at (5,2) {$+$};
    \node at (7,2) {$+$};
    \node at (9,2) {$+$};
    \node at (11,2) {$+$};
    \node at (13,2) {$+$};
    \node at (15,2) {$G$};
    \node at (1,0) {$+$};
    \node at (3,0) {$+$};
    \node at (5,0) {$+$};
    \node at (7,0) {$+$};
    \node at (9,0) {$+$};
    \node at (11,0) {$+$};
    \node at (13,0) {$+$};
    \node at (15,0) {$+$};
    \node at (0,5) {$+$};
    \node at (2,5) {$R$};
    \node at (4,5) {$R$};
    \node at (6,5) {$R$};
    \node at (8,5) {$R$};
    \node at (10,5) {$R$};
    \node at (12,5) {$R$};
    \node at (14,5) {$R$};
    \node at (16,5) {$R$};
    \node at (0,3) {$+$};
    \node at (2,3) {$+$};
    \node at (4,3) {$+$};
    \node at (6,3) {$+$};
    \node at (8,3) {$+$};
    \node at (10,3) {$B$};
    \node at (12,3) {$B$};
    \node at (14,3) {$B$};
    \node at (16,3) {$B$};
    \node at (0,1) {$+$};
    \node at (2,1) {$+$};
    \node at (4,1) {$+$};
    \node at (6,1) {$+$};
    \node at (8,1) {$+$};
    \node at (10,1) {$+$};
    \node at (12,1) {$+$};
    \node at (14,1) {$+$};
    \node at (16,1) {$G$};
    \node at (1,6.8) {$7$};
    \node at (3,6.8) {$6$};
    \node at (5,6.8) {$5$};
    \node at (7,6.8) {$4$};
    \node at (9,6.8) {$3$};
    \node at (11,6.8) {$2$};
    \node at (13,6.8) {$1$};
    \node at (15,6.8) {$0$};
    \node at (-.75,5) {$ 1$};
    \node at (-.75,3) {$ 2$};
    \node at (-.75,1) {$ 3$};
    \path[fill=white] (1,1) circle (.25);
    \node at (1,1) {\scriptsize$z_3$};
    \path[fill=white] (3,1) circle (.25);
    \node at (3,1) {\scriptsize$z_3$};
    \path[fill=white] (5,1) circle (.25);
    \node at (5,1) {\scriptsize$z_3$};
    \path[fill=white] (7,1) circle (.25);
    \node at (7,1) {\scriptsize$z_3$};
    \path[fill=white] (9,1) circle (.25);
    \node at (9,1) {\scriptsize$z_3$};
    \path[fill=white] (11,1) circle (.25);
    \node at (11,1) {\scriptsize$z_3$};
    \path[fill=white] (13,1) circle (.25);
    \node at (13,1) {\scriptsize$z_3$};
    \path[fill=white] (15,1) circle (.25);
    \node at (15,1) {\scriptsize$z_3$};

    \path[fill=white] (1,3) circle (.25);
    \node at (1,3) {\scriptsize$z_2$};
    \path[fill=white] (3,3) circle (.25);
    \node at (3,3) {\scriptsize$z_2$};
    \path[fill=white] (5,3) circle (.25);
    \node at (5,3) {\scriptsize$z_2$};
    \path[fill=white] (7,3) circle (.25);
    \node at (7,3) {\scriptsize$z_2$};
    \path[fill=white] (9,3) circle (.25);
    \node at (9,3) {\scriptsize$z_2$};
    \path[fill=white] (11,3) circle (.25);
    \node at (11,3) {\scriptsize$z_2$};
    \path[fill=white] (13,3) circle (.25);
    \node at (13,3) {\scriptsize$z_2$};
    \path[fill=white] (15,3) circle (.25);
    \node at (15,3) {\scriptsize$z_2$};

    \path[fill=white] (1,5) circle (.25);
    \node at (1,5) {\scriptsize$z_1$};
    \path[fill=white] (3,5) circle (.25);
    \node at (3,5) {\scriptsize$z_1$};
    \path[fill=white] (5,5) circle (.25);
    \node at (5,5) {\scriptsize$z_1$};
    \path[fill=white] (7,5) circle (.25);
    \node at (7,5) {\scriptsize$z_1$};
    \path[fill=white] (9,5) circle (.25);
    \node at (9,5) {\scriptsize$z_1$};
    \path[fill=white] (11,5) circle (.25);
    \node at (11,5) {\scriptsize$z_1$};
    \path[fill=white] (13,5) circle (.25);
    \node at (13,5) {\scriptsize$z_1$};
    \path[fill=white] (15,5) circle (.25);
    \node at (15,5) {\scriptsize$z_1$};

  \end{tikzpicture}
\vglue3pt
\[
\GTP(\mathfrak{s})=\left\{\begin{array}{ccccc}7&&3&&0\\&3&&0\\&&0\end{array}\right\},\quad
\GTP^\circ(\mathfrak{s})=\left\{\begin{array}{ccccc}5&&2&&0\\&2&&0\\&&0\end{array}\right\},\]
\vglue5pt
\[\mathfrak{T}(\mathfrak{s})=\begin{ytableau}2&2&3&3&3\\3&3\end{ytableau}\;,\qquad
\bzl_{(1,2,1)}(\mathfrak{T})=\left[\vcenter{\xymatrix@-1.5pc{*+[o][F-]{0}&*+[o][F-]{0}\\&*+[o][F-]{0}}}\right]\;.
\]
  \caption{The ground state. In this unique state with maximal number of
    crossings of colored lines, we have
    $\beta(\mathfrak{s})=\mathbf{z}^{\lambda+\rho}$,
    $\wt(\mathfrak{T}(\mathfrak{s}))=\mathbf{z}^{w_0(\lambda+\rho)}$.}
  \label{groundstate}
\end{figure}

\begin{remark}
\label{superrem}
It may be checked that the colored R-matrix (with $r$ colors) in
Figure~\ref{coloredrmatrix} is the limit as $q\to \infty$ of the R-matrix of a
Drinfeld twist of $U_q\big(\widehat{\mathfrak{sl}}(r|1)\big)$. It is
also possible to vary the Boltzmann weights as follows: in
Figure~\ref{coloredweights}, omit the $\tt{a}_2$ patterns in
which all four edges have the same color; and in Figure~\ref{coloredrmatrix},
change the Boltzmann weights of the patterns in which all four
edges have the same color from $z_i$ to $z_j$. These changes do
not affect any of the arguments in this paper since the changed patterns
do not appear in any of the states of the systems we consider,
but they change the underlying quantum group to a Drinfeld twist
of $U_q(\widehat{\mathfrak{sl}}_{r+1})$.
\end{remark}

Our next result shows that the colored partition function with $r$ colors 
and $r$ rows is a polynomial Demazure atom for
$\mathrm{GL}(r)$ up to a factor of $\mathbf{z}^{\rho}$.

\begin{theorem}
  \label{zdematoms}
  For every $w\in W$ we have
  \[Z(\mathfrak{S}_{\mathbf{z},\lambda,w})=\mathbf{z}^\rho\partial^\circ_w\mathbf{z}^\lambda.\]
\end{theorem}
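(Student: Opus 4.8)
The plan is to induct on the length $\ell(w)$, using the colored Yang--Baxter equation (Theorem~\ref{coloredybethm}) to produce a recursion in $w$ that mirrors the defining relation $\partial^\circ_{s_iw}=\partial^\circ_i\partial^\circ_w$ (valid when $s_iw>w$). Writing $Z(\mathfrak{S}_{\mathbf{z},\lambda,w})=\mathbf{z}^\rho f_w$, the goal becomes $f_w=\partial^\circ_w\mathbf{z}^\lambda$. Since $\langle\rho,\alpha_i^\vee\rangle=1$ we have $s_i\mathbf{z}^\rho=\mathbf{z}^{\rho-\alpha_i}$, so (using $\mathbf{z}^{\alpha_i}=z_i/z_{i+1}$) a one-line computation shows that for any $Z=\mathbf{z}^\rho f$,
\[\frac{z_{i+1}Z-z_i\,s_iZ}{z_i-z_{i+1}}=\mathbf{z}^\rho\,\partial^\circ_i f,\]
where $s_i$ interchanges $z_i$ and $z_{i+1}$. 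Hence it suffices to prove the base case $Z(\mathfrak{S}_{\mathbf{z},\lambda,1})=\mathbf{z}^{\rho+\lambda}$ together with the recursion, for every $i$ and every $w$ with $s_iw>w$,
\[Z(\mathfrak{S}_{\mathbf{z},\lambda,s_iw})=\frac{z_{i+1}\,Z(\mathfrak{S}_{\mathbf{z},\lambda,w})-z_i\,Z(\mathfrak{S}_{s_i\mathbf{z},\lambda,w})}{z_i-z_{i+1}},\]
which by the displayed conjugation identity is equivalent to $f_{s_iw}=\partial^\circ_i f_w$. Here $s_i\mathbf{z}$ denotes the tuple with $z_i,z_{i+1}$ interchanged.

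For the base case, the flag $w\mathbf{c}$ with $w=1$ assigns the colors $c_1>\dots>c_r$ in strictly decreasing order down the right edge. By Proposition~\ref{statedpf} the states of $\mathfrak{S}_{\mathbf{z},\lambda,1}$ correspond to those uncolored states whose induced right-boundary flag is $\mathbf{c}$, and the strictly decreasing order is realized by exactly one of them, the maximally-crossed ground state of Figure~\ref{groundstate}. Its tableau is the lowest-weight element, so $w_0\wt(\mathfrak{T})=\lambda$ and Proposition~\ref{zschur} gives Boltzmann weight $\mathbf{z}^{\rho+\lambda}$; thus $f_1=\mathbf{z}^\lambda=\partial^\circ_1\mathbf{z}^\lambda$.

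The recursion is the crux, and I would obtain it by the train argument. Insert a rotated $R$-vertex $R_{z_i,z_{i+1}}$ between rows $i$ and $i+1$ at the far left. Its two left legs continue the all-$+$ left boundary, so it is frozen into the all-$+$ configuration of Figure~\ref{coloredrmatrix}, contributing the scalar $z_{i+1}$ and feeding $+,+$ into the grid; the partition function of the enlarged system is therefore $z_{i+1}Z(\mathfrak{S}_{\mathbf{z},\lambda,w})$. Now slide the $R$-vertex rightward across each column using Theorem~\ref{coloredybethm}; every move preserves the partition function, and upon emerging on the far right the two rows have exchanged spectral parameters, so the underlying grid is now that of $\mathfrak{S}_{s_i\mathbf{z},\lambda,\,\cdot}$. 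The external right legs carry $\gamma=(w\mathbf{c})_i$ and $\delta=(w\mathbf{c})_{i+1}$; the hypothesis $s_iw>w$ is equivalent to $w^{-1}(i)<w^{-1}(i+1)$, i.e.\ to $\gamma>\delta$, so exactly two $R$-configurations are admissible: the color-preserving one (grid right boundary $w\mathbf{c}$, weight $z_i$) and the color-swapping one (grid right boundary $s_i(w\mathbf{c})=(s_iw)\mathbf{c}$, weight $z_i-z_{i+1}$). Equating the two partition functions gives
\[z_{i+1}Z(\mathfrak{S}_{\mathbf{z},\lambda,w})=z_i\,Z(\mathfrak{S}_{s_i\mathbf{z},\lambda,w})+(z_i-z_{i+1})\,Z(\mathfrak{S}_{s_i\mathbf{z},\lambda,s_iw}),\]
and applying $s_i$ and solving for $Z(\mathfrak{S}_{\mathbf{z},\lambda,s_iw})$ produces the recursion stated above.

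Finally, choosing a reduced word $w=s_{i_1}\cdots s_{i_\ell}$ and applying the recursion along the chain $1<s_{i_\ell}<\dots<w$ yields $f_w=\partial^\circ_{i_1}\cdots\partial^\circ_{i_\ell}\mathbf{z}^\lambda=\partial^\circ_w\mathbf{z}^\lambda$, the last equality being the word-independence of $\partial^\circ_w$ from the braid relations recalled in Section~\ref{sec:do}. I expect the main obstacle to be the train argument: one must verify that the frozen left $R$-vertex contributes exactly $z_{i+1}$, that sliding interchanges the two rows' spectral parameters, and---most delicately---the color bookkeeping at the right boundary, identifying the two admissible $R$-configurations and their weights $z_i$ and $z_i-z_{i+1}$ with the systems $\mathfrak{S}_{s_i\mathbf{z},\lambda,w}$ and $\mathfrak{S}_{s_i\mathbf{z},\lambda,s_iw}$.
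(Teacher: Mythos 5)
Your proposal is correct and follows essentially the same argument as the paper: induction on $w$ via the ground-state base case, the train argument with the colored Yang--Baxter equation producing a two-term identity from the two admissible $R$-configurations at the right boundary, and the conjugated operator identity $\mathbf{z}^\rho\partial_i^\circ\mathbf{z}^{-\rho}$ to recognize the recursion as $f_{s_iw}=\partial^\circ_i f_w$. The only cosmetic difference is that you attach the $R$-vertex to the grid with parameters $\mathbf{z}$ and slide it into the $s_i\mathbf{z}$ grid, whereas the paper starts from the $s_i\mathbf{z}$ grid and ends at $\mathbf{z}$; your exchange identity is exactly the $s_i$-substitution of the paper's equation~(\ref{asdbaraction}), so the two are equivalent.
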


\begin{proof}
  The proof is by induction with respect to Bruhat order. If $w=1_W$, it is easy to see that there is a unique state in
  $\mathfrak{S}_{\mathbf{z},\lambda,1_W}$ and its
  Boltzmann weight is $\mathbf{z}^{\rho+\lambda}$ (see Figure~\ref{groundstate}).
  Thus it suffices to show that for each $s_i$ and $w$ with $s_i w > w$,
  \begin{equation}
    \label{lemrecurse}
    \mathbf{z}^{- \rho} Z (\mathfrak{S}_{\mathbf{z}, \lambda, s_i w}) 
    = \partial^\circ_i \bigl(\mathbf{z}^{- \rho} Z
    (\mathfrak{S}_{\mathbf{z}, \lambda, w})\bigr).
  \end{equation}

  Let $w\mathbf{c}=\mathbf{d}= (d_1, \cdots, d_r)$. Since $s_i w >
  w$, we have $d_i > d_{i + 1}$. Consider the partition function
  of the system in Figure~\ref{modpf} (top). This is a system like the one portrayed in Figure~\ref{tostates} but with an attached rotated vertex $z_{i + 1}, z_i$ on the left.   We only exhibit two of the rows of the system because this is where the interesting changes occur.  
Also note that the parameters of the two rows are flipped, so now the top row has parameter $z_{i+1}$ and the bottom row has parameter $z_i$. 
  
  Consulting Figure~\ref{coloredrmatrix}, the rotated vertex (or the R-matrix) has only one possible admissible
  configuration (with all $+$ spins). This means the partition function of the top system in Figure~\ref{modpf} will be equal to the Boltzmann weight of
\[\begin{tikzpicture}[scale=0.7]
\draw (0,0) to [out = 0, in = 180] (2,2);
\draw (0,2) to [out = 0, in = 180] (2,0);
\draw[fill=white] (0,0) circle (.35);
\draw[fill=white] (0,2) circle (.35);
\draw[fill=white] (2,0) circle (.35);
\draw[fill=white] (2,2) circle (.35);
\node at (0,0) {$+$};
\node at (0,2) {$+$};
\node at (2,2) {$+$};
\node at (2,0) {$+$};
\path[fill=white] (1,1) circle (.3);
\node at (1,1) {$R_{z_{i+1},z_i}$};
\end{tikzpicture}
\]
times the partition function of the system with the rotated vertex removed. This is then $z_i Z (\mathfrak{S}_{s_i \mathbf{z}, \lambda, w})$.
 Note that $z_i$ and $z_j$ in Figure~\ref{coloredrmatrix}
  become here $z_{i+1}$ and $z_i$, respectively. 
  We are using red and blue for the colors $d_i$ and $d_{i + 1}$, respectively.

\begin{figure}[h]
\begin{tikzpicture}[scale=0.7]
\begin{scope}[shift={(-1,0)}]
  \draw (0,1) to [out = 0, in = 180] (2,3) to (4,3);
  \draw (0,3) to [out = 0, in = 180] (2,1) to (4,1);
  \draw (3,0.25) to (3,3.75);
  \draw (7,0.25) to (7,3.75);
  \draw (6,1) to (8,1);
  \draw (6,3) to (8,3);
  \draw[fill=white] (0,1) circle (.4);
  \draw[fill=white] (0,3) circle (.4);
  \draw[line width=0.5mm,red,fill=white] (8,3) circle (.4);
  \draw[line width=0.5mm,blue,fill=white] (8,1) circle (.4);
  \node at (0,1) {$+$};
  \node at (0,3) {$+$};
  \node at (5,3) {$\cdots$};
  \node at (5,1) {$\cdots$};
  \draw[densely dashed] (3,3.75) to (3,4.25);
  \draw[densely dashed] (3,0.25) to (3,-0.25);
  \draw[densely dashed] (7,3.75) to (7,4.25);
  \draw[densely dashed] (7,0.25) to (7,-0.25);
  \node at (8,1) {$B$};
  \node at (8,3) {$R$};
\path[fill=white] (3,3) circle (.5);
\node at (3,3) {\scriptsize$z_{i+1}$};
\path[fill=white] (3,1) circle (.4);
\node at (3,1) {\scriptsize$z_i$};
\path[fill=white] (7,3) circle (.5);
\node at (7,3) {\scriptsize$z_{i+1}$};
\path[fill=white] (7,1) circle (.4);
\node at (7,1) {\scriptsize$z_i$};
\path[fill=white] (1,2) circle (.3);
\node at (1,2) {\scriptsize$R_{z_{i+1},z_i}$};
\end{scope}
\begin{scope}[shift={(1,-5.5)}]
  \draw (4,1) to (6,1) to [out = 0, in = 180] (8,3);
  \draw (4,3) to (6,3) to [out = 0, in = 180] (8,1);
  \draw[line width=0.5mm,red,fill=white] (8,3) circle (.4);
  \draw[line width=0.5mm,blue,fill=white] (8,1) circle (.4);
  \draw (0,1) to (2,1);
  \draw (0,3) to (2,3);
  \draw (5,0.25) to (5,3.75);
  \draw (1,0.25) to (1,3.75);
  \draw[fill=white] (0,1) circle (.4);
  \draw[fill=white] (0,3) circle (.4);

  \node at (3,1) {$\cdots$};
  \node at (3,3) {$\cdots$};

  \draw[densely dashed] (1,3.75) to (1,4.25);
  \draw[densely dashed] (1,0.25) to (1,-0.25);
  \draw[densely dashed] (5,3.75) to (5,4.25);
  \draw[densely dashed] (5,0.25) to (5,-0.25);
  \path[fill=white] (1,3) circle (.4);
  \node at (1,3) {\scriptsize$z_{i}$};
  \path[fill=white] (1,1) circle (.5);
  \node at (1,1) {\scriptsize$z_{i+1}$};

  \path[fill=white] (5,3) circle (.4);
  \node (a) at (5,3) {\scriptsize$z_{i}$};
  \path[fill=white] (5,1) circle (.5);
  \node at (5,1) {\scriptsize$z_{i+1}$};

  \path[fill=white] (7,2) circle (.4);
  \node at (7,2) {\scriptsize$R_{z_{i+1},z_i}$};
  \node at (8,1) {$B$};
  \node at (8,3) {$R$};
  \node at (0,1) {$+$};
  \node at (0,3) {$+$};
\end{scope}
\end{tikzpicture}

\caption{Top: the system $\mathfrak{S}_{s_i\mathbf{z},\lambda,w}$ with
  the R-matrix attached. Bottom: after using the Yang-Baxter equation.}
\label{modpf}
\end{figure}

After repeated use of the Yang-Baxter equation (Figure~\ref{eqn:ybe}), we move
the rotated vertex to the right, switch the parameters of the two rows and
obtain a system with the same partition function by
Theorem~\ref{coloredybethm}. This is the system on the bottom of
Figure~\ref{modpf}. This method of Baxter is sometimes called the
``train argument.''
  
  Now looking at the possible weights from Figure~\ref{coloredrmatrix}, the R-matrix
  has two admisible configurations (third and fifth on the second row) and so the equality of partition functions from Figure~\ref{modpf} becomes the identity
  \[ z_i Z (\mathfrak{S}_{s_i \mathbf{z}, \lambda, w}) =
  z_{i + 1} Z (\mathfrak{S}_{\mathbf{z}, \lambda, w})
  + (z_{i+1} - z_{i})\,Z (\mathfrak{S}_{\mathbf{z}, \lambda, s_i w}) . \]
  Since $\mathbf{z}^{\alpha_i}=z_i/z_{i+1}$, the above identity may be rewritten as
  \begin{equation} \label{asdbaraction} 
  Z (\mathfrak{S}_{\mathbf{z}, \lambda, s_iw}) = - (1
     -\mathbf{z}^{\alpha_i})^{- 1} (Z (\mathfrak{S}_{\mathbf{z}, \lambda, w})
     -\mathbf{z}^{\alpha_i} Z (\mathfrak{S}_{s_i\mathbf{z}, \lambda, w})) . \end{equation}
  The right-hand side can be interpreted as the operator
  $-(1 -\mathbf{z}^{\alpha_i})^{-1} (1 - \mathbf{z}^{\alpha_i}s_i)$ applied to
  $Z (\mathfrak{S}_{\mathbf{z}, \lambda, w})$. Note that
  \[\partial^\circ_i=-(1 -\mathbf{z}^{\alpha_i})^{-1} (1 - s_i),\quad \textrm{and hence} \quad
  \mathbf{z}^\rho\partial^\circ_i\mathbf{z}^{-\rho}=
  -(1 -\mathbf{z}^{\alpha_i})^{-1} (1 - \mathbf{z}^{\alpha_i}s_i).\]
  Using this, (\ref{lemrecurse}) follows from (\ref{asdbaraction}).
\end{proof}

\begin{remark} It was recently found by Brubaker, Bump and Friedberg that
a variation of the Boltzmann weights produces the
Demazure character $\mathbf{z}^\rho\partial_{w}\mathbf{z}^\lambda$ instead
of the Demazure atom $\mathbf{z}^\rho\partial^\circ_{w}\mathbf{z}^\lambda$ in
Theorem~\ref{zdematoms}. The modification is to interchange red and blue
in the third case of Figure~\ref{coloredweights}.
We hope to discuss this in a subsequent paper.
\end{remark}

\section{\label{dcandci}Demazure crystals and atoms}

A refined Demazure character formula in the context of crystals was
obtained by Littelmann~\cite{LittelmannYT} and
Kashiwara~\cite{KashiwaraDemazure}. We begin this section by reviewing this refinement and then
proceed to identify Demazure atoms with subsets of crystal and characterize the vertices belonging to
this crystal. 

Let us fix a finite Cartan type
with weight lattice $\Lambda$; when we return to the colored ice we
will take this to be the $\GL(r)$ Cartan type. Let $\lambda$ be
a dominant weight, which we assume to be a partition.
Then there is a unique irreducible representation
$\pi_\lambda$ of highest weight $\lambda$, and a corresponding
normal crystal $\mathcal{B}_\lambda$ whose character is the
same as that of $\pi_\lambda$.

Recall that crystals come equipped with Kashiwara maps
$e_i,f_i:\mathcal{B}_\lambda\to\mathcal{B}_\lambda\cup\{0\}$ and 
$\varphi_i,\varepsilon_i:\mathcal{B}_\lambda\to\mathbb{Z}$ (see \cite{KashiwaraNakashima}).
For a crystal $\mathcal{B}$ an element
$v$ is called a \textit{highest weight element} if $e_i(v)=0$ for
all $i$; similarly it is \textit{lowest weight} if all $f_i(v)=0$.
The crystal $\mathcal{B}_\lambda$ has unique highest and lowest
weight elements $v_\lambda$ and $v_{w_0\lambda}$, respectively;
with weights $\wt(v_\lambda)=\lambda$ and $\wt(v_{w_0\lambda})=w_0\lambda$.

With $\mathcal{B}=\mathcal{B}_\lambda$ let
$\mathbb{Z} [\mathcal{B}]$ be the free abelian group on $\mathcal{B}$. We
define a map $\partial_i : \mathcal{B} \longrightarrow \mathbb{Z}
[\mathcal{B}]$ in terms of the Kashiwara operators $e_i$ and $f_i$ by
\[ \partial_i v = \left\{ \begin{array}{ll}
     v + f_i v + \ldots + f_i^k v & \text{if $k \geqslant 0,$}\\
     0 & \text{if $k = - 1,$}\\
     - (e_i v + \ldots + e_i^{- k - 1} v) & \text{if $k < -1$},
   \end{array} \right. \]
where $k = \langle \wt (v), \alpha_i^{\vee} \rangle$. This lifts the
Demazure operator $\partial_i$ to the crystal; indeed, composing with the familiar weight
map on the crystal (described in Section~\ref{sec:kansas}) produces the Demazure operators of (\ref{isobaricddefined}), and
so we will use the same notation for the operator in both contexts.

By an {\textit{$i$-root string}}
we mean an equivalence class of elements of $\mathcal{B}$ under the
equivalence relation that $x \equiv y$ if $x = e_i^r y$ or $x = f_i^r y$ for
some $r$. An $i$-root string $S$ has a unique highest weight element $u_S$
characterized by $e_i (u_S) = 0$. We may now state the \textit{refined
Demazure character formula} of Littelmann and Kashiwara.

\begin{theorem}[Littelmann, Kashiwara]
  \label{demazurecrystals}
  Let $\mathcal{B}=\mathcal{B}_\lambda$.
  \begin{enumerate}[label={(\roman*)}, leftmargin=*]
    \item \label{itm:crystals} There exist subsets $\mathcal{B}(w)$ of $\mathcal{B}$
  indexed by $w \in W$ such that $\mathcal{B}(1) = \{ v_{\lambda} \}$,
  $\mathcal{B}(w_0)=\mathcal{B}$ and if $s_i w > w$ then
  \[\mathcal{B}(s_i w) =
 \left\{ x \in \mathcal{B} \mid \text{$e_i^r x \in \mathcal{B}(w)$ for some
   $r$} \right\}\;.\]
 \item If $S$ is an $i$-root string then $\mathcal{B}(w) \cap S$ is one of the three
  possibilities: $\varnothing$, $S$ or $\{ u_S \}$. 
  \par\smallskip\noindent
\item \label{itm:character} We have
  \[ \sum_{x \in \mathcal{B}(w)} \mathbf{z}^{\wt(x)} = \partial_w\mathbf{z}^\lambda\;.\]
  \end{enumerate}
\end{theorem}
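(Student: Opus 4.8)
The plan is to lift the operator $\partial_i$, already defined on the crystal via the Kashiwara maps, to an operation on \emph{subsets} of $\mathcal{B}$ and to run the whole argument by induction on $\ell(w)$ along reduced words. For a subset $X\subseteq\mathcal{B}$ write $\mathfrak{D}_i X=\{x\in\mathcal{B}:e_i^r x\in X\text{ for some }r\geqslant 0\}$, the ``downward saturation'' of $X$ along $i$-root strings (downward because $e_i$ raises the $\alpha_i$-weight). First I would record the elementary string-local behavior: for a single $i$-root string $S$, the set $\mathfrak{D}_i X\cap S$ depends only on $X\cap S$, is empty when $X\cap S=\varnothing$, and otherwise equals the segment of $S$ at or below the highest element of $X\cap S$; in particular $\mathfrak{D}_i X\cap S=S$ when $u_S\in X\cap S$, and $\mathfrak{D}_i$ is idempotent. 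Defining $\mathcal{B}(w):=\mathfrak{D}_{i_1}\cdots\mathfrak{D}_{i_k}\{v_\lambda\}$ for a reduced word $w=s_{i_1}\cdots s_{i_k}$, the recursion in part \ref{itm:crystals} is immediate once this set is shown to be independent of the chosen reduced word, and $\mathcal{B}(1)=\{v_\lambda\}$ is clear while $\mathcal{B}(w_0)=\mathcal{B}$ follows because iterating the saturations along a reduced word for $w_0$ eventually reaches every element of the connected crystal generated by $v_\lambda$.

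The crux is therefore the well-definedness in part \ref{itm:crystals}: that the $\mathfrak{D}_i$ satisfy the braid relations $\mathfrak{D}_i\mathfrak{D}_j\mathfrak{D}_i\cdots=\mathfrak{D}_j\mathfrak{D}_i\mathfrak{D}_j\cdots$. This is the genuinely hard input and cannot be verified purely string-by-string, since it couples the $i$- and $j$-strings. I would reduce it to the rank-two subcrystals and invoke the fine structure of rank-two normal crystals, equivalently Kashiwara's tensor realization of $\mathcal{B}_\lambda$ in terms of $\mathcal{B}_\infty$ together with the compatibility of Demazure saturation with tensor products. This is precisely the content supplied by Littelmann's path model and Kashiwara's original argument, and it is the step I expect to be the main obstacle; with the braid relations in hand, part \ref{itm:crystals} is complete.

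For part (ii) I would argue by induction on $\ell(w)$ that $\mathcal{B}(w)\cap S\in\{\varnothing,S,\{u_S\}\}$ for every $i$-root string $S$ and every $i$ simultaneously. The base case $\mathcal{B}(1)=\{v_\lambda\}$ is clear. Writing $w=s_j w'$ with $\ell(w)>\ell(w')$, we have $\mathcal{B}(w)=\mathfrak{D}_j\mathcal{B}(w')$. For a string $S$ attached to the same index $j$, the inductive hypothesis gives $\mathcal{B}(w')\cap S\in\{\varnothing,S,\{u_S\}\}$, and applying $\mathfrak{D}_j$ sends these respectively to $\varnothing$, $S$, $S$, so $\mathcal{B}(w)\cap S\in\{\varnothing,S\}$; for a string attached to a different index the claim requires the interaction of $\mathfrak{D}_j$ with those strings, which is the same rank-two analysis invoked above. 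This is the second place where the rank-two structure enters.

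Finally, part \ref{itm:character} follows formally from (ii). String-by-string, the three-case monomial formula for $\partial_i$ yields $\sum_{x\in\mathfrak{D}_i X}\mathbf{z}^{\wt(x)}=\partial_i\sum_{x\in X}\mathbf{z}^{\wt(x)}$ whenever $X$ meets each $i$-string in $\varnothing$, the full string, or its top: in the first case both sides vanish, in the second the character of a full string is $s_i$-invariant and hence fixed by $\partial_i$, and in the third $\partial_i\mathbf{z}^{\wt(u_S)}$ is exactly the character of the filled string $\mathfrak{D}_i\{u_S\}=S$. Applying this with $X=\mathcal{B}(w')$, which satisfies the hypothesis by part (ii), and combining $\mathcal{B}(s_iw')=\mathfrak{D}_i\mathcal{B}(w')$ with the relation $\partial_{s_iw'}=\partial_i\partial_{w'}$ for $\ell(s_iw')>\ell(w')$, the identity $\sum_{x\in\mathcal{B}(w')}\mathbf{z}^{\wt(x)}=\partial_{w'}\mathbf{z}^\lambda$ of the inductive hypothesis propagates to $w=s_iw'$, completing the induction.
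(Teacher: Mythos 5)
You should note at the outset that the paper does not prove Theorem~\ref{demazurecrystals}: it is quoted as a known theorem of Littelmann and Kashiwara, with the proof deferred to \cite{KashiwaraDemazure} and to Chapter~13 of \cite{BumpSchilling}. So your sketch must be measured against those proofs. The parts of your sketch that are genuinely argued are correct: $\mathfrak{D}_i$ commutes with unions, its action on a single $i$-string is as you describe, and your deduction of part (iii) from part (ii) --- decomposing the character into $i$-string contributions and using that $\partial_i$ fixes the character of a full string and sends $\mathbf{z}^{\wt(u_S)}$ to the character of $S$ --- is complete and is the standard argument.

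The gap is that the two steps you defer are not auxiliary citations; they are the entire content of the theorem, and deferring them to ``Littelmann's path model and Kashiwara's original argument'' is circular, since the statement being proved \emph{is} their theorem. Concretely: (a) you propose to get reduced-word independence from operator-level braid relations $\mathfrak{D}_i\mathfrak{D}_j\mathfrak{D}_i\cdots=\mathfrak{D}_j\mathfrak{D}_i\mathfrak{D}_j\cdots$ on arbitrary subsets, verified on rank-two subcrystals. That is a strictly \emph{stronger} assertion than the theorem needs, it is not what the sources prove (Kashiwara defines $\mathcal{B}(w)$ via global bases of Demazure modules, so well-definedness is automatic and the recursion is the theorem; \cite{BumpSchilling} instead works in $\mathcal{B}_\infty$ with the $\star$-involution), and you offer no verification of it in types $A_1\times A_1$, $A_2$, $B_2$, $G_2$ --- which would be a substantial computation with explicit rank-two models, not a remark. (b) The cross-index case of the string property (ii) is likewise exactly where the difficulty lives: (ii) is false for arbitrary subsets (a singleton $\{x\}$ with $x\neq u_S$ violates it), so nothing string-local can prove it, and your inductive hypothesis ``good intersection with every root string'' is not obviously preserved by $\mathfrak{D}_j$; asserting that ``the same rank-two analysis'' handles it is a placeholder, not an argument. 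Finally, even $\mathcal{B}(w_0)=\mathcal{B}$ is not automatic from connectedness as you suggest: it amounts to Littelmann's nontrivial fact, used in the paper just before \eqref{plstring}, that the string data of any $v$ along a reduced word for $w_0$ terminates at the extremal element. In short, the skeleton and the easy reductions are right, but every load-bearing step is missing.
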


\noindent
See~\cite{KashiwaraDemazure} or \cite{BumpSchilling} Chapter~13 for proof.

\medskip

Demazure characters and atoms were defined in Section~\ref{sec:do} as functions
on the complex torus $T$. The preceding theorem allows us to lift Demazure characters to the crystal $\mathcal{B}=\mathcal{B}_\lambda$; as in
the theorem, we will denote these (lifted) Demazure characters by $\mathcal{B}(w)$ for $w \in W$.
Let $\mathcal{B}^\circ(w)$ ($w\in W$) be a family of disjoint subsets of
$\mathcal{B}$.  We call these a family of \textit{crystal Demazure atoms}
if
\begin{equation}
 \label{eqatomic}\mathcal{B}(w)=\bigcup_{y\leqslant w}\mathcal{B}^\circ(y).
\end{equation}

\begin{lemma}
\label{lem:uniqueatomic}
If a family of disjoint subsets
$\mathcal{B}^\circ(w)$ satisfying \eqref{eqatomic} exists it is unique.
\end{lemma}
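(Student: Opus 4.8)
The plan is to prove uniqueness by downward recursion through the Bruhat order on $W$, using the observation that the defining relation \eqref{eqatomic} is ``triangular'' with respect to this order and hence invertible once disjointness is imposed. Concretely, suppose two families $\mathcal{B}^\circ(w)$ and $\widetilde{\mathcal{B}}^\circ(w)$ of pairwise disjoint subsets both satisfy \eqref{eqatomic} for the same given collection $\{\mathcal{B}(w)\}_{w\in W}$; the goal is to show $\mathcal{B}^\circ(w)=\widetilde{\mathcal{B}}^\circ(w)$ for every $w$.

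First I would treat the base case $w=1_W$. Since $1_W$ is the minimum of the Bruhat order, the only index $y$ with $y\leqslant 1_W$ is $y=1_W$ itself, so \eqref{eqatomic} reduces to $\mathcal{B}(1_W)=\mathcal{B}^\circ(1_W)$. Thus $\mathcal{B}^\circ(1_W)$ is forced to equal the given set $\mathcal{B}(1_W)$, and likewise for $\widetilde{\mathcal{B}}^\circ$, so the two agree. For the inductive step, fix $w$ and assume $\mathcal{B}^\circ(y)=\widetilde{\mathcal{B}}^\circ(y)$ for all $y<w$. Separating the term $y=w$ from the disjoint union in \eqref{eqatomic} gives
\[
\mathcal{B}(w)=\mathcal{B}^\circ(w)\sqcup\bigsqcup_{y<w}\mathcal{B}^\circ(y),
\]
and because the union is disjoint we may solve for the top piece,
\[
\mathcal{B}^\circ(w)=\mathcal{B}(w)\setminus\bigcup_{y<w}\mathcal{B}^\circ(y).
\]
The right-hand side involves only the fixed set $\mathcal{B}(w)$ and the atoms indexed by elements strictly below $w$, which by the inductive hypothesis coincide for the two families. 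Hence $\mathcal{B}^\circ(w)=\widetilde{\mathcal{B}}^\circ(w)$, completing the induction.

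I do not expect a genuine obstacle here: the content of the lemma is precisely that \eqref{eqatomic}, viewed as an upper-triangular system of set relations indexed by the finite poset $(W,\leqslant)$, can be inverted, and the disjointness hypothesis is exactly what converts the set-theoretic union into something from which each summand is recoverable by set difference. Equivalently, one could package this as M\"obius inversion in the incidence algebra of the Bruhat order applied to the indicator functions, yielding the closed formula $\mathbf{1}_{\mathcal{B}^\circ(w)}=\sum_{y\leqslant w}\mu(y,w)\,\mathbf{1}_{\mathcal{B}(y)}$ and making uniqueness manifest; but the recursive description above is more elementary and suffices. The only mild point to verify is that the Bruhat order on the finite group $W$ is well-founded with unique minimum $1_W$, which justifies both the base case and the legitimacy of the induction.
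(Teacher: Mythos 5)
Your proof is correct, but it takes a more elementary route than the paper. The paper identifies each subset $S\subseteq\mathcal{B}$ with the element $\sum_{v\in S}v$ of the free abelian group $\mathbb{Z}[\mathcal{B}]$, rewrites \eqref{eqatomic} as $\mathcal{B}(w)=\sum_{y\leqslant w}\mathcal{B}^\circ(y)$, and invokes M\"obius inversion for the Bruhat order (citing Verma and Stembridge) to get the closed signed formula $\mathcal{B}^\circ(w)=\sum_{y\leqslant w}(-1)^{\ell(w)-\ell(y)}\mathcal{B}(y)$, from which uniqueness is immediate. You instead solve the triangular system of set relations recursively: $\mathcal{B}^\circ(1_W)=\mathcal{B}(1_W)$, and $\mathcal{B}^\circ(w)=\mathcal{B}(w)\setminus\bigcup_{y<w}\mathcal{B}^\circ(y)$, which is legitimate precisely because the union in \eqref{eqatomic} is disjoint and the Bruhat order on the finite group $W$ is well-founded. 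What each approach buys: the paper's argument yields an explicit inclusion--exclusion expression for the atoms (at the cost of knowing the M\"obius function of Bruhat order, a nontrivial input), while yours needs nothing beyond well-founded induction and set difference, and is entirely self-contained. Your closing remark about packaging the recursion as M\"obius inversion on indicator functions is exactly the paper's proof, so you have in effect subsumed it. One terminological quibble: what you call ``downward recursion'' is really upward (well-founded) induction from the identity element; this does not affect the validity of the argument.
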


\begin{proof}
Let us identify a subset $S$ of $\mathcal{B}$ with the element
$\sum_{v\in S} v$ of the free abelian group $\mathbb{Z}[\mathcal{B}]$.
Then we may rewrite (\ref{eqatomic}) as
\[\mathcal{B}(w)=\sum_{y\leqslant w}\mathcal{B}^\circ(y).\]
By M\"{o}bius inversion with respect to the Bruhat
order (\cite{Verma,Stembridge}) this is equivalent to
\[\mathcal{B}^\circ(w)=\sum_{y\leqslant w}(-1)^{\ell(w)-\ell(y)}\mathcal{B}(y).\]
This characterization of $\mathcal{B}^\circ(w)$ as an element
of $\mathbb{Z}[\mathcal{B}]$ proves the uniqueness.
\end{proof}

As explained in the Introduction, in type~A such a decomposition of the set of
tableaux in any $\mathcal{B}_\lambda$ is given by the theory of
Lascoux-Sch\"utzenberger keys. We will give another algorithm to compute, for
any $v \in \mathcal{B}$, the element~$w \in W$ such that $v \in
\mathcal{B}^\circ(w)$ and show that the resulting subsets satisfy
(\ref{eqatomic}), making them a family of crystal Demazure atoms.
This algorithm makes use of the \textit{string} or \textit{BZL} patterns for
vertices in a crystal, which we now describe. These patterns were introduced
in~\cite{BerensteinZelevinskyDuke} for type~$A$, and more generally
in~\cite{LittelmannCones}. See also~\cite{BumpSchilling} Chapter~11
and~\cite{wmd5book} Chapters~2 and~5.

Let $\mathbf{i}=(i_1,\cdots,i_N)$ be a reduced word
for $w_0=s_{i_1}\cdots s_{i_N}$.
Given any $v\in\mathcal{B}_\lambda$, let $b_1 := b_1(v)$ be the largest
nonnegative integer such that $f_{i_1}^{b_1}v\neq 0$.
Then let $b_2$ be the largest integer such that
$f_{i_2}^{b_2}f_{i_1}^{b_1}v\neq 0$. Continuing, we
find that $f_{i_N}^{b_N}\cdots f_{i_2}^{b_2}f_{i_1}^{b_1}v=v_{w_0\lambda}$.
We will denote the resulting vector of lengths in root strings by
\begin{equation}
  \label{plstring}
  \bzl^{(f)}_{\mathbf{i}}(v):=(b_1,\cdots,b_N).
\end{equation}
Dually, let $c_1,\cdots,c_N$ be the
maximum values such that $e_{i_k}^{c_k}\cdots e_{i_2}^{c_2}e_{i_1}^{c_1}v\neq 0$
for $k=1,2,\cdots,N$. Then 
$e_{i_N}^{c_N}\cdots e_{i_2}^{c_2}e_{i_1}^{c_1}v=v_\lambda$ and we define
\begin{equation}
  \label{plstringe}
  \bzl^{(e)}_{\mathbf{i}}(v):=(c_1,\cdots,c_N).
\end{equation}

The map $\alpha\mapsto -w_0\alpha$ permutes the positive roots,
and in particular the simple roots. Thus there is a
bijection $i\mapsto i'$ of the set $I$ of indices such that
$\alpha_{i'}=-w_0\alpha_i$ and $w_0s_iw_0^{-1}=s_{i'}$.
In the $\GL(r)$ case $I=\{1,\cdots,r-1\}$
and $i'=r-i$. The crystal also has a map $v\mapsto v'$, the
\textit{Sch\"utzenberger} or \textit{Lusztig} involution, such that if
$v\in\mathcal{B}$ then
\begin{equation}
  \label{schutzef}
  f_i(v')=(e_{i'}(v))',\qquad e_i(v')=(f_{i'}(v))' .
\end{equation}
It follows from (\ref{schutzef}) that
if $\mathbf{i}'=(i_1',\cdots,i_N')$ then
\begin{equation}
  \label{schutzbzl}
  \bzl^{(e)}_{\mathbf{i}'}(v)=\bzl^{(f)}_{\mathbf{i}}(v').
\end{equation}

Littelmann~\cite{LittelmannCones} observed that for certain ``good'' choices of long word
$\mathbf{i}$ the set of possible string patterns can be easily characterized.
For $\GL(r)$, we take
\begin{equation}
  \label{littstr}
  \mathbf{i}=(1,2,1,3,2,1,4,3,2,1,\cdots,r,r-1,\cdots,3,2,1).
\end{equation}
Thus
\begin{equation}
  \label{litstrp}
  \mathbf{i}'=(r-1,r-2,r-1,r-3,r-2,r-1,\cdots,r-3,r-2,r-1) .
\end{equation}

Following~\cite{LittelmannCones} we arrange the
string pattern $\bzl^{(e)}_{\mathbf{i}'}(v)=(b_1,b_2,\cdots)$
in an array
\begin{equation}
  \label{bzlpat}
  \bzl^{(e)}_{\mathbf{i}'}(v)=
\left[\begin{array}{cccc}
\ddots&&\vdots&\vdots\\
&b_4&b_5&b_6\\
&&b_2&b_3\\
&&&b_1\end{array}\right]
\end{equation}
in which the $b_i$ satisfy the Littelmann cone
inequalities 
\begin{equation}
 \label{litcone}
  b_1\geqslant 0,\qquad b_2\geqslant b_3\geqslant 0,\qquad b_4\geqslant
  b_5\geqslant b_6\geqslant 0\,,\qquad\cdots\;.
\end{equation}

Following~\cite{wmd5book} we decorate the string pattern (\ref{bzlpat})
by circling certain $b_i$ according to these cone inequalities.

\begin{circling}
  \label{circlingrule}
  Let $\mathbf{b}=(b_1,b_2,\cdots,b_N)$ where $N=r(r-1)/2$ be a
  sequence of nonnegative integers satisfying \eqref{litcone}.
  We arrange the sequence in an array \eqref{bzlpat} and
  decorate it by circling an entry $b_i$ if it is minimal in the cone. 
  Explicitly, if $i$ is a triangular number, so that
  $b_i$ is at the right end of its row, the condition for circling it
  is that $b_i=0$; otherwise, the condition for circling is
  that $b_i=b_{i+1}$. 
\end{circling}

Let $(i_1,i_2,i_3,i_4,i_5,i_6,\cdots)$ be the sequence $(1,2,1,3,2,1,\cdots)$ of \eqref{littstr}.
We transfer the circles from the string pattern to
the following array made with the simple reflections:
\begin{equation}
\label{circledreflections}
\left[\begin{array}{cccc}
\ddots&&\vdots&\vdots\\
&s_{i_6}&s_{i_5}&s_{i_4}\\
&&s_{i_3}&s_{i_2}\\
&&&s_{i_1}\end{array}\right] =
\left[\begin{array}{cccc}
\ddots&&\vdots&\vdots\\
&s_1&s_2&s_3\\
&&s_1&s_2\\
&&&s_1\end{array}\right]\;.
\end{equation}

\begin{remark}
  Note that the horizontal orders of the entries in (\ref{bzlpat})
  and (\ref{circledreflections}) are different.
\end{remark}

If $v\in\mathcal{B}_\lambda$, let $(s_{j_1},\cdots,s_{j_k})$ be the subsequence of
$(s_{i_1},s_{i_2},s_{i_3},\cdots)=(s_1,s_2,s_1,s_3,s_2,s_1,\cdots)$
consisting of the circled reflections in (\ref{circledreflections}) derived
from the string pattern $\bzl^{(e)}_{\mathbf{i}'}(v)$. Here $\mathbf{i}'$
is the specific sequence in (\ref{litstrp}). With the nondescending product $\Pind$ defined in (\ref{pinddef}), define $\omega : \mathcal{B}_\lambda \to W$ by
\begin{equation}
  \label{ompdef}\omega(v) := \Pind(s_{j_1},\cdots,s_{j_k}).
\end{equation}

For example, suppose that the string pattern is:
\begin{equation}
  \label{samplebzl}
\left[
\vcenter{
\xymatrix@-1.5pc{
*+[o][F-]{1}&1\\
&*+[o][F-]{0}}}\right]\;.
\end{equation}
The circling rule tells us to 
circle $b_1$ and $b_2$ since $b_1=0$ and $b_2=b_3$.
Thus we circle these entries:
\[\left[
\vcenter{
\xymatrix@-1.5pc{
*+[o][F-]{s_1}&s_2\\
&*+[o][F-]{s_1}}}\right]\;
\]
and $\omega(v)=\Pind(s_1,s_1)= \left\{ S_1^2 \right\} = s_1$ in this case, using the notation of Remark~\ref{heckecompute}.

We may now state one of our main results. Let $W_\lambda$ be the
stabilizer of $\lambda$ in $W$. Note that if $w,w'\in W$ lie in
the same coset of $W/W_\lambda$ then $\mathcal{B}_\lambda(w)=\mathcal{B}_\lambda(w')$.
We will say that $w\in W$ is $\lambda$-maximal if it is
the longest element of its subset.

\begin{theorem}
  \label{daform}
  Let $\mathcal{B}=\mathcal{B}_\lambda$.
  There exist a family of subsets $\mathcal{B}^\circ(w)$
  of $\mathcal{B}$ indexed by $w\in W$ such that
  $\mathcal{B}^\circ(w)=\mathcal{B}^\circ(w')$
  if and only if $w,w'$ lie in the same coset of
  $W/W_\lambda$; otherwise they are disjoint, and such
  that the decomposition \eqref{eqatomic} is satisfied. If $w$ is the
  longest element of this coset, then
  \begin{equation}
    \label{bcircdef}
    \mathcal{B}^\circ(w)=\{v\in\mathcal{B} \mid w_0\omega(v)=w\}.
  \end{equation}
  If $w$ is not the longest element of its coset then
  the equation $w_0\omega(v)=w$ has no solutions.
\end{theorem}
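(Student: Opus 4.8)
The plan is to reduce the entire theorem to a single clean equivalence and then prove that equivalence by induction on the Bruhat order, using the recursion for Demazure crystals as the engine and Kashiwara's $\mathcal{B}_\infty$ together with the $\star$-involution to control the map $\omega$. Observe first that if we \emph{define} $\mathcal{B}^\circ(w):=\{v\in\mathcal{B}\mid w_0\omega(v)=w\}$, then $\bigcup_{y\leqslant w}\mathcal{B}^\circ(y)=\{v\in\mathcal{B}\mid w_0\omega(v)\leqslant w\}$, so \eqref{eqatomic} is equivalent to
\[
\mathcal{B}(w)=\{v\in\mathcal{B}\mid w_0\omega(v)\leqslant w\},
\]
which I abbreviate $(\ast)$. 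Granting $(\ast)$, the sets $\mathcal{B}^\circ(w)$ partition $\mathcal{B}$ by the value of $w_0\omega(v)$, so disjointness is automatic, and by Lemma~\ref{lem:uniqueatomic} these are \emph{the} crystal Demazure atoms. The coset statement, and the ``no solutions'' clause \eqref{bcircdef}, then follow once one checks the purely combinatorial fact that $w_0\omega(v)$ is always $\lambda$-maximal; this I expect to read off the circling rule directly, the forced circlings produced by equal parts of $\lambda$ (via the cone inequalities \eqref{litcone}) pinning $\omega(v)$ into the correct coset representative. Thus the heart of the matter is $(\ast)$.

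For the base case $w=1_W$ we have $\mathcal{B}(1)=\{v_\lambda\}$, and I would verify $(\ast)$ by computing $\omega(v_\lambda)$. Since $v_\lambda$ is highest weight, $\bzl^{(e)}_{\mathbf{i}'}(v_\lambda)=(0,\cdots,0)$, so every entry is circled and $\omega(v_\lambda)=\Pind(s_{i_1},\cdots,s_{i_N})$ taken over the full reduced word $(s_1,s_2,s_1,\cdots)$ of \eqref{littstr}, which equals $w_0$; conversely an all-zero pattern forces $v=v_\lambda$. Hence $\{v\mid w_0\omega(v)\leqslant 1\}=\{v\mid \omega(v)=w_0\}=\{v_\lambda\}$, as required. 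For the inductive step, with $s_iw>w$ I would apply the recursion $\mathcal{B}(s_iw)=\{x\mid e_i^r x\in\mathcal{B}(w)\text{ for some }r\geqslant 0\}$ of Theorem~\ref{demazurecrystals}\ref{itm:crystals} together with the inductive hypothesis $(\ast)$ for $w$. This reduces $(\ast)$ for $s_iw$ to the assertion that, for every $x$,
\[
\bigl(\exists\,r\geqslant 0:\ w_0\omega(e_i^r x)\leqslant w\bigr)\iff w_0\omega(x)\leqslant s_iw.
\]
The natural statement to establish is that along an entire $i$-root string the function $w_0\omega$ takes values governed by the single reflection $s_i$, with the top of the string carrying the ``$s_i$-lowered'' value; the displayed equivalence is then a formal Bruhat-order manipulation of the kind already used in the proof of Theorem~\ref{thm:lskeys}.

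The obstacle is precisely computing the effect of $e_i$ on $\omega$: the recipe for $\omega$ uses the $e$-string pattern for the fixed long word $\mathbf{i}'$ of \eqref{litstrp}, which does not begin with $i$, so a single $e_i$ perturbs the whole pattern and its interaction with the circling rule \eqref{circledreflections} is opaque. To make it transparent I would lift to Kashiwara's $\mathcal{B}_\infty$: the data $\bzl^{(e)}_{\mathbf{i}'}(v)$ stabilizes under the embeddings $\mathcal{B}_\lambda\hookrightarrow\mathcal{B}_\infty\otimes T_\lambda$ into a one-element crystal $T_\lambda$, so $\omega$ descends from a $\lambda$-independent map $\widetilde\omega$ on $\mathcal{B}_\infty$. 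On $\mathcal{B}_\infty$ one has the second family of root operators $e_i^\star,f_i^\star$ and the $\star$-involution relating them; applying $\star$ converts the ``read circled reflections in the order of \eqref{circledreflections} and take $\Pind$'' prescription into a form compatible with the $e_i$-recursion for $\mathcal{B}(w)$, giving exactly the per-string behaviour needed above. This is the content of the key step~\eqref{eq:keystep}, and it is where Theorem~\ref{zdematoms} is injected.

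The hard part, and the reason the argument cannot be purely formal, is the gap between a \emph{character} identity and an \emph{equality of subsets}: the Yang--Baxter/partition-function computation of Theorem~\ref{zdematoms} shows cheaply that $\sum_{v:\,w_0\omega(v)=w}\mathbf{z}^{\wt(v)}=\partial^\circ_w\mathbf{z}^\lambda$ agrees with the character of the genuine atom (Theorem~\ref{demazurecrystals}\ref{itm:character} via Lemma~\ref{lem:uniqueatomic}), but distinct subsets of $\mathcal{B}_\lambda$ can share a character, so this alone does not prove $(\ast)$. The $\star$-involution is the device that upgrades the character match to a set equality: it trades the delicate alternating/Möbius description of atoms for a genuine set-theoretic $e_i$-recursion of Demazure type, inside which the induction on $\ell(w)$ closes. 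Carrying out the $\star$-compatibility of $\widetilde\omega$ and reconciling it with the Littelmann cone combinatorics \eqref{litcone}--\eqref{circledreflections} is the technical core, and is what I would develop in Sections~\ref{sec:binf} and~\ref{sec:dafpro}.
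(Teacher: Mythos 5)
Your high-level reduction and your toolkit match the paper's: you reduce the theorem to $(\ast)$: $\mathcal{B}(w)=\{v\in\mathcal{B}\mid w_0\omega(v)\leqslant w\}$, you lift to $\mathcal{B}_\infty$, you invoke the $\star$-involution, and you know Theorem~\ref{zdematoms} must be injected. The genuine gap is in your inductive step. You propose to prove, for $s_iw>w$, the equivalence
\[
\bigl(\exists\,r\geqslant 0:\ w_0\omega(e_i^r x)\leqslant w\bigr)\iff w_0\omega(x)\leqslant s_iw,
\]
by establishing that ``along an entire $i$-root string the function $w_0\omega$ takes values governed by the single reflection $s_i$,'' and then finishing with formal Bruhat-order manipulations. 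But what the $\star$-involution machinery actually delivers (Lemma~\ref{omstarrec} of the paper) is far weaker: for $v\in\mathcal{B}_\infty$ one has either $\omega^\dagger(v)=\omega^\dagger(f_kv)$ or $\omega^\dagger(v)=s_k\omega^\dagger(f_kv)$, where $\omega^\dagger(v)=\omega(v^\star)^{-1}$, with \emph{no control over which alternative occurs}. Note also that this statement is about $\omega\circ\star$, not $\omega$, and $\star$ is not a crystal morphism, so root strings do not translate across it. Dichotomy-without-selection is exactly enough to propagate Bruhat \emph{upper bounds} along a string, hence proves the forward implication, i.e.\ the inclusion $\mathcal{B}_\infty(w)\subseteq\{v\mid w_0\omega(v)\leqslant w\}$ (this is Lemma~\ref{lem:B-inclusion}); it cannot yield the reverse implication, whose content is as strong as the theorem itself. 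So the induction you describe does not close.

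The paper closes this gap not inside the induction but by a global counting argument that your proposal gestures at without stating: pull the inclusion back along $\psi_\lambda$ to get $X:=\mathcal{B}_\lambda(w)\subseteq Y:=\{v\in\mathcal{B}_\lambda\mid w_0\omega(v)\leqslant w\}$; compute the character of $Y$ via Theorem~\ref{crystalmt}, Proposition~\ref{zschur} and Theorem~\ref{zdematoms} as $\sum_{y\leqslant w}\partial^\circ_y\mathbf{z}^\lambda$, which equals $\partial_w\mathbf{z}^\lambda=\operatorname{char}(X)$ by Theorems~\ref{thm:lskeys} and~\ref{demazurecrystals}; then set $\mathbf{z}=1_T$ so the finite sets $X\subseteq Y$ have equal cardinality and are therefore equal. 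This is precisely \eqref{eq:keystep}. Thus the roles of your two ingredients are the opposite of what you assert: the $\star$-involution supplies one inclusion, and the Yang--Baxter character identity plus finiteness upgrades that inclusion to set equality --- your correct worry that distinct subsets can share a character is neutralized only because an inclusion is already in hand. Two smaller points: your base case needs ``$\omega(v)=w_0$ forces the all-zero string pattern'' (a length count on nondescending products), not the converse you state; and the coset/maximality clause is proven in the paper by the lattice model (Proposition~\ref{adjacentcols}, Corollary~\ref{adjacentcor} --- forced crossings of colored lines for repeated parts of $\lambda$), not by reading the circling rule, and any direct crystal-side argument there would need to be supplied.
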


This is a refinement of results of Lascoux and
Sch\"utzenberger~\cite{LascouxSchutzenbergerKeys},
and is one of the main points of the paper.
Equation (\ref{bcircdef}), together with the definition and properties
of $\omega$, leads to the the algorithmic characterization of the crystal
Demazure atom in Subsection~\ref{subsec:alg}. The proof of Theorem~\ref{daform}
will be given later, in Section~\ref{sec:dafpro}.

\section{\label{bijsec}A bijection between colored states and Demazure atoms}

We return now to colored ice models. Recall from Proposition~\ref{statedpf} that the
admissible states of colored ice $\mathfrak{S}_{\mathbf{z}, \lambda, w}$ with
$w \in W$ partition the set of admissible states of uncolored ice in the
system $\mathfrak{S}_{\mathbf{z}, \lambda}$. The map from any
$\mathfrak{S}_{\mathbf{z}, \lambda, w}$ to
$\mathfrak{S}_{\mathbf{z}, \lambda}$ is simply given by ignoring the colors
(i.e., replacing each colored edge by a $-$ spin).

In Section~\ref{sec:kansas} we defined a map
$\mathfrak{s}\to\mathfrak{T}(\mathfrak{s})$ from
$\mathfrak{S}_{\mathbf{z},\lambda}$ to $\mathcal{B}_\lambda$.  We are
interested in knowing the image of $\mathfrak{S}_{\mathbf{z}, \lambda, w}$
under this map. Let $v\to v'$ be the Sch\"utzenberger (Lusztig)
involution of $\mathcal{B}_\lambda$.

\begin{theorem}
  \label{crystalmt}
  If $w \in W$ and $\mathfrak{s}\in\mathfrak{S}_{\mathbf{z},\lambda}$, then
  $\mathfrak{s}\in\mathfrak{S}_{\mathbf{z},\lambda,w}$ if and only if
  $w_0\omega\big(\mathfrak{T}(\mathfrak{s})'\big)=w$.
\end{theorem}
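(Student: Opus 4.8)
I want to prove Theorem~\ref{crystalmt}: for a state $\mathfrak{s}$ of the uncolored system and a Weyl group element $w$, membership $\mathfrak{s}\in\mathfrak{S}_{\mathbf{z},\lambda,w}$ is equivalent to $w_0\omega\bigl(\mathfrak{T}(\mathfrak{s})'\bigr)=w$. By Proposition~\ref{statedpf} each uncolored state $\mathfrak{s}$ lies in exactly one colored system $\mathfrak{S}_{\mathbf{z},\lambda,w}$, so it suffices to produce a single map $\mathfrak{s}\mapsto w(\mathfrak{s})$ from the coloring and to show it equals $w_0\omega(\mathfrak{T}(\mathfrak{s})')$. The permutation $w(\mathfrak{s})$ is read off the right boundary: the colors $w\mathbf{c}$ exit the right edge top to bottom, and since colors are conserved and each pair of colored paths crosses at most once, $w$ is determined by the braiding pattern of the colored lines.

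\textbf{The plan.} I would introduce the horizontal dashed lines $\ell_0,\ell_1,\dots,\ell_r$ shown in Figure~\ref{tostates}, where $\ell_k$ separates row $k$ from row $k+1$ and then turns up the right side; the sequence of colors met along $\ell_k$ is an intermediate flag $\mathbf{c}_k$. Reading the boundary conventions carefully (the right boundary $w\mathbf{c}$ is read top-down but $\ell_{r-1}$ meets the same edges bottom-up), the last flag is $\mathbf{c}_{r-1}=w_0\,w\,\mathbf{c}$, while $\mathbf{c}_0=\mathbf{c}$. Thus if $u_k\in W$ is defined by $\mathbf{c}_k=u_k\mathbf{c}$, we have $u_0=1$ and $u_{r-1}=w_0 w$, so $w=w_0\,u_{r-1}$. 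The heart of the argument is to describe how $u_k$ is obtained from $u_{k-1}$ by the crossings of colored lines that occur between $\ell_{k-1}$ and $\ell_k$, i.e.\ in row~$k$, and to match these crossings with the circling rule defining $\omega$.

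\textbf{Matching crossings to circles.} A crossing of two colored lines in row~$k$ is precisely an $\tt{a}_2$ vertex with two distinct colors, where by the weight conventions in Figure~\ref{coloredweights} the larger color goes right and the smaller goes down. I would translate ``a crossing at column position corresponding to $s_j$'' into a multiplication $u_k = s_{?}\,u_{k-1}$ that increases length, which is exactly the nondescending-product recursion~\eqref{pinddef}: a crossing contributes a simple reflection only when it genuinely raises length, and the circling rule in the Gelfand-Tsetlin pattern (circle $z=y$ in the first algorithm, i.e.\ the $\bzl^{(f)}_{\mathbf{i}}$ description) marks exactly those subtriangles where such a length-increasing crossing is possible. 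I would make this precise by showing that the sequence of circled simple reflections in~\eqref{gamcirc}, traversed bottom-to-top right-to-left, is the same sequence of crossings recorded by the flags $\mathbf{c}_k$, and that $\Pind$ of this sequence is $u_{r-1}=w_0 w$, giving $\omega(\mathfrak{T}(\mathfrak{s})')=w_0 w$ and hence $w_0\omega(\mathfrak{T}(\mathfrak{s})')=w$. The combinatorial bridge between the ice state and the pattern is the bijection $\mathfrak{s}\mapsto\GTP^\circ(\mathfrak{s})$ of Section~\ref{sec:kansas}; the Sch\"utzenberger involution $T\mapsto T'$ appears because the crossings are naturally read in the ``$e$-string'' direction for $\mathbf{i}'$, which by~\eqref{schutzbzl} corresponds to the $f$-string of $T'$, matching the conventions in which $\omega$ is defined via $\bzl^{(e)}_{\mathbf{i}'}$.

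\textbf{The main obstacle.} The routine part is the local vertex analysis; the delicate part is bookkeeping the two reading directions. The flags are read top-to-bottom through the grid, whereas $\omega$ is built from string data and the circling array~\eqref{circledreflections} is traversed in a specific order, and the $w_0$ twist together with the involution $T\mapsto T'$ must be tracked consistently. The real work is to verify that the indexing of a crossing in row~$k$ at the appropriate horizontal position matches the index $i_m$ attached to the circled entry $b_m$ in~\eqref{bzlpat}--\eqref{circledreflections}, and that ``the larger color moves right'' corresponds exactly to the nondescending (length-increasing) branch of $\Pind$. Once the dictionary between crossings and circled reflections is established, the equality $w_0\omega(\mathfrak{T}(\mathfrak{s})')=w(\mathfrak{s})=w$ follows, and this combined with the disjoint decomposition of Proposition~\ref{statedpf} yields the theorem.
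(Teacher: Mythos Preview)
Your proposal is correct and follows essentially the same route as the paper's proof: the dashed lines $\ell_k$ with intermediate flags $\mathbf{c}_k$, the identification of $\tt{a}_2$ vertices with circled entries via the Gelfand--Tsetlin pattern, the observation that colored lines cross at most once so that the resulting permutation is the nondescending product of the circled reflections, and the $w_0$ twist coming from the bottom-to-top reading of the right boundary. The paper makes the ``matching crossings to circles'' step explicit by invoking the formula~(\ref{bzlfromgtp}) for the $b_k$ in terms of the $a_{i,j}$, from which one reads off directly that $b_k$ is circled if and only if $A_{i,j}=A_{i-1,j}$ for the appropriate indices, i.e.\ an $\tt{a}_2$ vertex sits at that column; this is the concrete bookkeeping you flag as the main obstacle.
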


Thus if we accept Theorem~\ref{daform}, whose proof will be given later,
comparing Theorem~\ref{crystalmt} with (\ref{bcircdef}) shows
that the map $\mathfrak{s}\rightarrow\mathfrak{T}(\mathfrak{s})'$
sends the ensemble $\mathfrak{S}_{\mathbf{z},\lambda,w}$ to the Demazure
atom $\mathcal{B}_\lambda^\circ(w)$. Ultimately the proof of
Theorem~\ref{daform} in Section~\ref{sec:dafpro}
will rely on this Theorem~\ref{crystalmt}.

Before we prove Theorem~\ref{crystalmt} we give an example.
In Figure~\ref{adjointcrystal},
we have labeled the elements of the $\GL(3)$ crystal $\mathcal{B}_\lambda$
($\lambda=(2,1,0)$) by a flag indicating the colors along the right edge of the corresponding
state. These colors are read off from top to bottom on the horizontal
edges at the right boundary of the grid.
In the decomposition of Proposition~\ref{statedpf}, the flag
is a permutation $w\mathbf{c}$ of the colors of the standard
flag, which we are taking to be $\mathbf{c}=(R,B,G)$. For
example, to compute the flags for the elements
\begin{equation}
  \label{tostatesbis}
\ytableausetup{nosmalltableaux}\begin{ytableau}1&3\\2\end{ytableau} \quad \text{and} \quad
  \begin{ytableau}1&2\\2\end{ytableau}
\end{equation}
we construct the corresponding states as in Figure~\ref{tostates} and then read
off the colors from the right edge, which are $(G,R,B)$ for both states.
In Figure~\ref{adjointcrystal} these colors are
represented as a flag. The flag allows us to read off the unique $y\in W$
such that the corresponding state $\mathfrak{s}$ is in
$\mathfrak{S}_{\mathbf{z},\lambda,y}$. For example in
the two states in (\ref{tostatesbis}), we have the flag $(G,R,B)=s_1s_2(R,B,G)$ and
so $y=s_1s_2$.

\noindent
\begin{figure}[htb]
\[\ytableausetup{smalltableaux}
\begin{tikzpicture}[flagmatrix/.style={xshift=.9cm, matrix of nodes, nodes={outer sep=0pt, inner sep=0pt, minimum size=3.5mm, font=\scriptsize\bfseries, text=white}, row sep={3.5mm,between origins}}]
\node at (0,0) {$\begin{ytableau}2&3\\3\end{ytableau}$};
\node at (-2,2) {$\begin{ytableau}1&3\\3\end{ytableau}$};
\node at (2,2) {$\begin{ytableau}2&2\\3\end{ytableau}$};
\node at (-2,4) {$\begin{ytableau}1&3\\2\end{ytableau}$};
\node at (2,4) {$\begin{ytableau}1&2\\3\end{ytableau}$};
\node at (-2,6) {$\begin{ytableau}1&2\\2\end{ytableau}$};
\node at (2,6) {$\begin{ytableau}1&1\\3\end{ytableau}$};
\node at (0,8) {$\begin{ytableau}1&1\\2\end{ytableau}$};
\draw[->] (-1.6,1.6) -- (-.5,.5);
\draw[->] (1.5,1.5) -- (.5,.5);
\draw[->] (-2,3.4) -- (-2,2.6);
\draw[->] (2,3.4) -- (2,2.6);
\draw[->] (-2,5.4) -- (-2,4.6);
\draw[->] (2,5.4) -- (2,4.6);
\draw[->] (0.4,7.6) -- (1.5,6.5);
\draw[->] (-.5,7.5) -- (-1.5,6.5);

\matrix at (0,0) [flagmatrix] {|[fill=red]| R \\ |[fill=blue]| B \\ |[fill=green]| G \\};
\matrix at (-2,2) [flagmatrix] {|[fill=red]| R \\ |[fill=green]| G \\ |[fill=blue]| B \\};
\matrix at (-2,4) [flagmatrix] {|[fill=green]| G \\ |[fill=red]| R \\ |[fill=blue]| B \\};
\matrix at (-2,6) [flagmatrix] {|[fill=green]| G \\ |[fill=red]| R \\ |[fill=blue]| B \\};
\matrix at (2,2) [flagmatrix] {|[fill=blue]| B \\ |[fill=red]| R \\ |[fill=green]| G \\};
\matrix at (2,4) [flagmatrix] {|[fill=blue]| B \\ |[fill=green]| G \\ |[fill=red]| R \\};
\matrix at (2,6) [flagmatrix] {|[fill=blue]| B \\ |[fill=green]| G \\ |[fill=red]| R \\};
\matrix at (0,8) [flagmatrix] {|[fill=green]| G \\ |[fill=blue]| B \\ |[fill=red]| R \\};

\node at (-1.2,7.25) {$\scriptstyle 1$};
\node at (1.2,7.25) {$\scriptstyle 2$};
\node at (-2.2,5) {$\scriptstyle 2$};
\node at (2.2,5) {$\scriptstyle 1$};
\node at (-2.2,3) {$\scriptstyle 2$};
\node at (2.2,3) {$\scriptstyle 1$};
\node at (1.2,1) {$\scriptstyle 2$};
\node at (-1.2,1) {$\scriptstyle 1$};
\end{tikzpicture}\qquad\qquad
\begin{tikzpicture}
\node at (0,0) {$\begin{smallmatrix}0&0\\&0\end{smallmatrix}$};
\node at (-2,2) {$\begin{smallmatrix}0&0\\&1\end{smallmatrix}$};
\node at (-2,4) {$\begin{smallmatrix}1&1\\&0\end{smallmatrix}$};
\node at (-2,6) {$\begin{smallmatrix}2&1\\&0\end{smallmatrix}$};

\node at (2,2) {$\begin{smallmatrix}1&0\\&0\end{smallmatrix}$};
\node at (2,4) {$\begin{smallmatrix}1&0\\&1\end{smallmatrix}$};
\node at (2,6) {$\begin{smallmatrix}1&0\\&2\end{smallmatrix}$};
\node at (0,8) {$\begin{smallmatrix}1&2\\&1\end{smallmatrix}$};

\draw (0,0) circle (.4);
\draw (-2,2) circle (.4);
\draw (2,2) circle (.4);
\draw (-2,4) circle (.4);
\draw (2,4) circle (.4);
\draw (-2,6) circle (.4);
\draw (2,6) circle (.4);
\draw (0,8) circle (.4);

\draw[->] (-1.6,1.6) -- (-.5,.5);
\draw[->] (1.5,1.5) -- (.5,.5);
\draw[->] (-2,3.4) -- (-2,2.6);
\draw[->] (2,3.4) -- (2,2.6);
\draw[->] (-2,5.4) -- (-2,4.6);
\draw[->] (2,5.4) -- (2,4.6);
\draw[->] (0.4,7.6) -- (1.5,6.5);
\draw[->] (-.5,7.5) -- (-1.5,6.5);

\node at (-1.2,7.25) {$\scriptstyle 1$};
\node at (1.2,7.25) {$\scriptstyle 2$};
\node at (-2.2,5) {$\scriptstyle 2$};
\node at (2.2,5) {$\scriptstyle 1$};
\node at (-2.2,3) {$\scriptstyle 2$};
\node at (2.2,3) {$\scriptstyle 1$};
\node at (1.2,1) {$\scriptstyle 2$};
\node at (-1.2,1) {$\scriptstyle 1$};
\end{tikzpicture}\]
\caption{Left: The $\GL(3)$ crystal of highest weight $\lambda={(2,1,0)}$,
showing the ``flags'' that are the colors of the right edges of the
corresponding states. Right: the same crystal, showing the pattern
$\bzl^{(f)}_{\mathbf{i}}$ that controls both the crossings of colored lines in the
state, and which also carry information about the Demazure crystals.}
\label{adjointcrystal}
\end{figure}

Now let us also verify Theorem~\ref{daform} and Theorem~\ref{crystalmt}
for the patterns in Figure~\ref{tostates}.
Both are in the system $\mathfrak{S}_{\mathbf{z},(2,1,0),s_1s_2}$.
Their string patterns
$\bzl^{(e)}_{\mathbf{i}'}(\mathfrak{T}')=\bzl^{(f)}_{\mathbf{i}}(\mathfrak{T})$
are shown in Table~\ref{tab:string-patterns}.

\begin{table}[h]
\ytableausetup{nosmalltableaux}
  \centering
  \caption{String patterns for the examples shown in Figure~\ref{tostates} with tableau $\mathfrak{T}$ and its Sch\"utzenberger involution $\mathfrak{T}'$.}
  \label{tab:string-patterns}
  \vspace{-1em}
\[\def\arraystretch{2.2}
\begin{array}{c@{\hskip 2em}c@{\hskip 2em}l}
  \toprule
  \mathfrak{T} & \mathfrak{T}' & \bzl^{(f)}_{(1,2,1)}(\mathfrak{T})=\bzl^{(e)}_{(2,1,2)}(\mathfrak{T}') \\
  \midrule 
  \ytableaushort{12,2} & \ytableaushort{22,3} & \left[\vcenter{\xymatrix@-1.5pc{2&1\\&*+[o][F-]{0}}}\right] \vspace{1em}\\
  \ytableaushort{13,2} & \ytableaushort{12,3} & \left[\vcenter{\xymatrix@-1.5pc{*+[o][F-]{1}&1\\&*+[o][F-]{0}}}\right] \vspace{1em}\\
  \bottomrule
\end{array}
\]
\end{table}

We have $\omega(\mathfrak{T}')=s_1$ in both cases; indeed for the first row in Table~\ref{tab:string-patterns}, $\omega(\mathfrak{T}')=\Pind(s_1)=s_1$
and in the second row $\omega(\mathfrak{T}')=\Pind(s_1,s_1)=s_1$, and in both cases
$w_0\omega(\mathfrak{T'})=s_1s_2$. Moreover the two patterns $\mathfrak{T}'$
comprise the Demazure atom $\mathcal{B}^\circ(s_1s_2)$ since they are
the two patterns in $\mathcal{B}(s_1s_2)$ that are not already in
$\mathcal{B}(s_2)$.
Thus we have confirmed both Theorem~\ref{daform} and Theorem~\ref{crystalmt} for one
particular Demazure atom.

\begin{proof}[Proof of Theorem~\ref{crystalmt}]
  First we will show that the circled locations in $\GTP^\circ(\mathfrak{s})$
  correspond to $\texttt{a}_2$ vertices in the state $\mathfrak{s}$
  (by the labeling in Figure~\ref{coloredweights}), which are places where the
  colored lines may cross.

  Let $\mathfrak{s}$ be a state of $\mathfrak{S}_{\mathbf{z},\lambda,y}$.
  Let $\GTP^\circ(\mathfrak{s})$ and $\mathfrak{T}\in\mathcal{B}_\lambda$
  be the corresponding Gelfand-Tsetlin pattern and tableau as described in Section~\ref{sec:kansas} (using the embedding of $\mathfrak{S}_{\mathbf{z},\lambda,y}$ into $\mathfrak{S}_{\mathbf{z},\lambda}$). We take
  $v=\mathfrak{T}'$ in (\ref{bzlpat}) so we are using
  $\bzl_{\mathbf{i}'}^{(e)}(\mathfrak{T}')=\bzl_{\mathbf{i}}^{(f)}(\mathfrak{T})$ represented as a vector $(b_1, b_2, \cdots)$.
  Let us consider how the circles may be read off from the
  Gelfand-Tsetlin pattern with entries $a_{i,j}$ as in (\ref{reducedgtp}). According to Proposition~2.2 of~\cite{wmd5book}, 
  \begin{equation}
\label{bzlfromgtp}
\begin{array}{l}
  b_1 = a_{r,r}-a_{r-1,r}\\
  b_2 = (a_{r-1,r-1}+a_{r-1,r})-(a_{r-2,r-1}+a_{r-2,r}),\\
  b_3 = a_{r-1,r}-a_{r-2,r},\\
  b_4 = (a_{r-2,r-2}+a_{r-2,r-1}+a_{r-2,r})-(a_{r-3,r-2}+a_{r-3,r-1}+a_{r-3,r}),\\
  b_5 = (a_{r-2,r-1}+a_{r-2,r})-(a_{r-3,r-1}+a_{r-3,r}),\\
  b_6 = a_{r-2,r}-a_{r-3,r},\\
  \quad\vdots\end{array}
\end{equation}
  These imply that the circled locations depend on equalities between entries in
  $\GTP(\mathfrak{s})$ or, equivalently, $\GTP^\circ(\mathfrak{s})$. 
For example $b_2$ is circled if and only if
  $a_{r-1,r-1}=a_{r-2,r-1}$. With $A_{i,j}$ the entries in
  $\GTP(\mathfrak{s})$, so that $A_{i,j}=a_{i,j}+r-j$, this
  is equivalent to $A_{r-1,r-1}=A_{r-2,r-1}$, and similarly
  if any $b_k$ is circled then we have $A_{i,j}=A_{i-1,j}$
  for the appropriate $i,j$. Now recall that in the
  bijection $\mathfrak{T} \leftrightarrow \mathfrak{s}$, $A_{i,j}$ is the
  number of a column where a vertical edge has a colored spin.
  Therefore from the admissible colored ice configurations of
  Figure~\ref{coloredweights}, the circled entries in (\ref{bzlpat})
  correspond to vertices of type $\tt{a}_2$ in the state of ice
  $\mathfrak{s}$. These are locations where two colored lines may cross.  From
  Figure~\ref{coloredweights} the lines will cross if and only if the left
  edge color is greater than the top edge color at the vertex, which is
  equivalent to the assumption that they have not crossed previously.
  
  We consider a sequence of lines $\ell_i$ through the grid, $i=0,\ldots,r$
  to be described as follows. The line $\ell_i$ begins to the left of the
  grid between the $i$-th and $(i+1)$-th row, or above the first row if $i=0$,
  or below the $r$-th row if $i=r$. It traverses the grid, then moves up to
  the northeast corner. See Figure~\ref{tostates} where these lines are drawn
  in two examples.

  Each $\ell_i$ intersects exactly $r$ colored lines, and we can read off
  the colors sequentially; let $\mathbf{c}^i$ be the corresponding sequence
  of colors. Thus $\mathbf{c}^0=\mathbf{c}$, while $\mathbf{c}^r=w_0y\mathbf{c}$,
  where $y$ is the Weyl group element we wish to compute. The $w_0$ in this
  last identity is included because the line $\ell_r$ visits the horizontal
  edges on the right edge from bottom to top, whereas in describing the flag
  $y\mathbf{c}$, the reading is from top to bottom. (See Figure~\ref{tostates}.)

  As we have already noted, the circled entries in (\ref{bzlpat})
  correspond to $\tt{a}_2$ patterns in the state. These are places
  where two colored lines may cross. The crossings interchange
  colors and each corresponds to a simple reflection that is circled in
  (\ref{circledreflections}). So if $i>0$            
  we may try to compute $\mathbf{c}_{i}$ from
  $\mathbf{c}_{i-1}$ by applying the circled reflections in the $i$-th row of
  (\ref{circledreflections}). Remembering from the proof of
  Proposition~\ref{statedpf} that the colors in the $i$-th row
  are assigned from right to left, this means (subject to a
  caveat that we will explain below) that
  \[\mathbf{c}_i=(s_{r-i})_i\cdots(s_3)_i(s_2)_i(s_1)_i\mathbf{c}_{i-1},\]
  where $(s_j)_i$ denotes $s_j$ if $s_j$ is circled in the $i$-th row
  of (\ref{circledreflections}), and $(s_j)_i=1$ if $s_j$ is not
  circled.

  If $i=r$, there is no $i$-th row to (\ref{circledreflections}), and
  correspondingly $\mathbf{c}_r=\mathbf{c}_{r-1}$. This is as it should be
  since at this stage there is only a single colored vertical edge that
  intersects the line $\ell_{r-1}$, and no interchanges are possible. (See
  Figure~\ref{tostates}.)
    
  We mentioned that there is a caveat in the above explanation.
  This is because from Figure~\ref{coloredweights} we see that in
  an $\tt{a}_2$ vertex, if the color $c$ is left of the vertex and $d$ is
  above, the colored lines will cross if $c>d$ but not otherwise. In particular,
  two colored lines can only cross once. More precisely, if two colored
  lines meet more than once (at $\tt{a}_2$ vertices) they will cross the
  first time they meet, and never again. For this reason, the permutation
  that turns $\mathbf{c}^0=\mathbf{c}$ into $\mathbf{c}^r=w_0y\mathbf{c}$ is
  the nondescending product $\Pi_{\operatorname{nd}}(s_{i_1},\cdots,s_{i_k})$ where
  $(s_{i_1},\cdots,s_{i_k})$ is the subsequence of circled simple reflections
  in \eqref{circledreflections}. Note that according to the definition of $\Pi_{\operatorname{nd}}(s_{i_1},\cdots,s_{i_k})$ in equation~\eqref{pinddef}, the circled simple reflections corresponding to the $\tt{a}_2$ patterns where there is a crossing play a role in recursively defining $\Pi_{\operatorname{nd}}(s_{i_1},\cdots,s_{i_k})$, while the circled simple reflections corresponding to the $\tt{a}_2$ patterns where there is no crossing do not affect the product. 
  Therefore
  $y=w_0\Pi_{\operatorname{nd}}(s_{i_1},\cdots,s_{i_k})=w_0\omega(\mathfrak{T}')$.
  
  This shows that $\mathfrak{s} \in \mathfrak{S}_{\mathbf{z}, \lambda, y}$
  implies $y = w_0 \omega(\mathfrak{T}(\mathfrak{s})')$. By
  Proposition~\ref{statedpf}, if
  $\mathfrak{s} \notin \mathfrak{S}_{\mathbf{z},\lambda, y}$ then
  $\mathfrak{s} \in \mathfrak{S}_{\mathbf{z},\lambda, y'}$ with
  $y \neq y' \in W$, 
  which we have shown implies that $y \neq y' = w_0 \omega(\mathfrak{T}')$.
\end{proof}

\begin{proposition}
  \label{adjacentcols}Suppose that a part of $\lambda$ is repeated, so that
  $\lambda_i = \lambda_{i + 1} = \ldots = \lambda_j = c$. Then each pair of
  colored lines through the top boundary edges in columns $c + r - i, \cdots,
  c + r - j$ must cross. Thus if $\mathfrak{S}_{\mathbf{z}, \lambda, w}$ is
  nonempty, then $w$ is the shortest Weyl group element in its coset in
  $W / W_{\lambda}$.
\end{proposition}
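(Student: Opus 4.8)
The plan is to read the required crossings off the left-strict Gelfand--Tsetlin pattern $\GTP(\mathfrak{s})$ attached to a state $\mathfrak{s}\in\mathfrak{S}_{\mathbf{z},\lambda,w}$, and then translate the resulting block of forced $\tt{a}_2$ vertices into the statement about $w$. First I would record the starting data: the colored line of color $c_k$ enters the top boundary in the column $A_{1,k}=\lambda_k+r-k$, so for the repeated block $\lambda_i=\cdots=\lambda_j=c$ the colors $c_i>c_{i+1}>\cdots>c_j$ occupy the \emph{consecutive} columns $c+r-i>c+r-i-1>\cdots>c+r-j$, with the larger colors in the larger (hence leftmost) columns.

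The engine of the argument is the left-strictness $A_{l,m}>A_{l+1,m+1}\geqslant A_{l,m+1}$. Since the top-row block entries are consecutive integers, the inequality $A_{1,m-1}>A_{2,m}\geqslant A_{1,m}$ squeezes $A_{2,m}=A_{1,m}$ for $i+1\leqslant m\leqslant j$; iterating the squeeze row by row produces a staircase triangle of forced equalities $A_{l+1,m}=A_{l,m}$, equivalently a triangle of $\binom{j-i+1}{2}$ subtriangles $\begin{smallmatrix}x&&y\\&z\end{smallmatrix}$ with $z=y$. By the analysis in the proof of Theorem~\ref{crystalmt}, each such circled subtriangle sits at an $\tt{a}_2$ vertex of $\mathfrak{s}$, i.e. a vertex where two colored lines meet (the all-one-color $\tt{a}_2$ patterns do not occur in our states). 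I would then argue these meetings are genuine crossings: two block lines first meet with the larger color still on the left, since they start that way and, by Figure~\ref{coloredweights}, a given pair crosses at most once; thus the criterion ``the lines cross if and only if the left color exceeds the top color'' applies and they cross. As there are exactly $\binom{j-i+1}{2}$ forced $\tt{a}_2$ vertices and exactly $\binom{j-i+1}{2}$ pairs of block lines, this gives the first assertion that every pair crosses.

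For the second assertion, observe from the $\tt{a}_2$ weights of Figure~\ref{coloredweights} that at a crossing the larger color continues horizontally while the smaller is pushed down one row; hence after their unique crossing the larger of two block colors exits strictly above the smaller. Writing $c_k$ for the color that exits in row $w(k)$, the ``all pairs cross'' statement forces $w(i)<w(i+1)<\cdots<w(j)$ for each repeated block, i.e. $w$ is increasing on every $W_\lambda$-block. This is precisely the characterization of the minimal-length representative of the coset $wW_\lambda$, so if $\mathfrak{S}_{\mathbf{z},\lambda,w}$ is nonempty then $w$ is the shortest element of its coset in $W/W_\lambda$.

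The step I expect to require the most care is the middle one: upgrading the combinatorial equalities $A_{l+1,m}=A_{l,m}$ into genuine line crossings. One must check that a forced $\tt{a}_2$ vertex is in the crossing configuration (larger color entering from the left) rather than the ``bounce'' configuration (the other $\tt{a}_2$ pattern, which is equally circled but does not exchange the two lines), and that the $\binom{j-i+1}{2}$ forced vertices are matched bijectively with the pairs of block lines. I anticipate this follows cleanly from the ``the larger color starts to the left and two lines cross at most once'' principle already exploited in Theorem~\ref{crystalmt}, but it is the point that must be pinned down carefully.
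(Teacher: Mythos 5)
Your first and last steps are sound, and they are also the steps where you agree with the paper: the squeeze $A_{1,m-1}>A_{2,m}\geqslant A_{1,m}$ applied to consecutive top-row entries does force the staircase of equalities $A_{l+1,m}=A_{l,m}$; each such equality does mark an $\mathtt{a}_2$ vertex (as in the proof of Theorem~\ref{crystalmt}, because $\mathtt{b}_1$ is excluded); and the passage from ``every block pair crosses'' to ``$w$ is increasing on each block, hence the shortest element of $wW_\lambda$'' is correct. The genuine gap is the counting step in the middle. You match the $\binom{j-i+1}{2}$ forced $\mathtt{a}_2$ vertices bijectively with the $\binom{j-i+1}{2}$ pairs of block lines, but nothing forces a forced vertex to be a meeting of \emph{two block lines}: a strictly larger color from \emph{outside} the block can travel rightward into the block region and account for those vertices. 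Concretely, take $r=3$, $\lambda=(2,1,1)$, so the block is $\{2,3\}$ with top columns $2,1$, and there is exactly one forced vertex, in row $1$, column $1$. In the unique state of $\mathfrak{S}_{\mathbf{z},\lambda,1}$ (the ground state), $c_1$ turns right at column $4$, crosses $c_2$ at column $2$ and $c_3$ at column $1$, and exits in row $1$; the forced vertex is thus the crossing of $c_1$ (not a block line) with $c_3$, while the block pair $(c_2,c_3)$ crosses only at the non-forced vertex in row $2$, column $1$. So the bijection you invoke fails, and with it the inference ``$\binom{n}{2}$ forced vertices $\Rightarrow$ every block pair crosses,'' even though the conclusion is true. (A secondary issue: meetings and crossings are not in bijection anyway, since a pair that has already crossed, or a pair whose larger member starts to the right, can meet again in the non-crossing $\mathtt{a}_2$ configuration, possibly several times.)

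The paper's proof does not use Gelfand-Tsetlin patterns at all; it argues directly from the exclusion of $\mathtt{b}_1$, and the missing content in your write-up is exactly the inductive ``sweeping'' argument that the paper leaves to the reader. The key facts are: (a) a line entering a vertex from the top cannot continue downward unless a strictly larger line crosses it from the left; (b) a line travelling rightward across the block columns cannot turn down there, since every vertex it meets has a colored top edge and is therefore $\mathtt{a}_2$; and (c) any line entering the block region from the left must have started in a column to the left of the block (lines never move left), hence carries a color larger than every block color. Consequently, in each row either the largest surviving block line turns right and is then forced to sweep across, and cross, all remaining block lines (their columns being consecutive), or a larger outside line sweeps across all of them, in which case every block line goes straight down and the same configuration (consecutive columns, decreasing colors left to right) recurs one row lower. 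Since all lines must exit by row $r$, the first alternative eventually occurs for each successive block leader, and every pair of block lines crosses. If you want to keep your Gelfand-Tsetlin bookkeeping, you still need (a)--(c) to identify which lines occupy the forced vertices; the count alone cannot do it.
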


\begin{proof}
  We are only considering states in which there are no $\tt{b}_1$ patterns
  since these have weight $0$ in Figure~\ref{coloredweights}.  We leave it to
  the reader to convince themselves that because of this, colored lines
  that start in adjacent columns, or more generally in columns not separated by a
  $+$ spin on the top boundary edge must cross. Because we read the colors on
  the top boundary vertical edges from left to right and on the right
  horizontal boundary edges from top to bottom, this means that the colors are
  in the same order. Hence if $\mathfrak{S}_{\mathbf{z}, \lambda, w}$ is
  nonempty, $w$ does not change the order of colors corresponding to equal
  parts in the partition $\lambda$. This is the same as saying that it is the
  shortest element of its coset in $W / W_{\lambda}$.
\end{proof}

\begin{corollary}
  \label{adjacentcor}
  If $v \in \mathcal{B}_{\lambda}$ then $\omega (v)$ is the longest element of
  its coset in $W / W_{\lambda}$.
\end{corollary}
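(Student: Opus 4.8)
The plan is to deduce this directly from Theorem~\ref{crystalmt} and Proposition~\ref{adjacentcols}, combined with the standard length-reversing property of the long element $w_0$. The point is that $\omega$ was built to read off the crossing data of a colored state, and Proposition~\ref{adjacentcols} already pins down the extremal behavior of the associated Weyl group element; all that remains is to transport that statement across the factor of $w_0$ appearing in Theorem~\ref{crystalmt}.

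First I would realize the given $v\in\mathcal{B}_\lambda$ as arising from a state of colored ice. Since $\mathfrak{s}\mapsto\mathfrak{T}(\mathfrak{s})$ is a bijection $\mathfrak{S}_{\mathbf{z},\lambda}\to\mathcal{B}_\lambda$ (Section~\ref{sec:kansas}) and the Sch\"utzenberger involution $v\mapsto v'$ is a bijection of $\mathcal{B}_\lambda$, I choose the unique state $\mathfrak{s}$ with $\mathfrak{T}(\mathfrak{s})=v'$, so that $\mathfrak{T}(\mathfrak{s})'=v$. By Proposition~\ref{statedpf} the state $\mathfrak{s}$ lies in a unique $\mathfrak{S}_{\mathbf{z},\lambda,w}$, and Theorem~\ref{crystalmt} identifies this $w$ as $w=w_0\,\omega\big(\mathfrak{T}(\mathfrak{s})'\big)=w_0\,\omega(v)$. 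In particular $\mathfrak{S}_{\mathbf{z},\lambda,w}$ is nonempty, since it contains $\mathfrak{s}$, so Proposition~\ref{adjacentcols} applies and tells us that $w=w_0\,\omega(v)$ is the shortest element of its coset in $W/W_\lambda$.

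The final step converts ``shortest'' into ``longest.'' Here I would use the identity $\ell(w_0 x)=\ell(w_0)-\ell(x)$, valid for every $x\in W$. Left multiplication by $w_0$ permutes the left cosets of $W/W_\lambda$, sending $xW_\lambda$ to $w_0 x W_\lambda$; and because it reverses lengths it carries the minimal-length representative of any coset to the maximal-length representative of the image coset. Applying this to $w=w_0\,\omega(v)$: since $w$ is shortest in $wW_\lambda$, the element $w_0 w=\omega(v)$ (using $w_0^2=1$) is longest in $w_0 w\,W_\lambda=\omega(v)\,W_\lambda$, which is exactly the assertion.

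The argument is short, so the only place requiring genuine care is the coset bookkeeping in this last step: one must check that left multiplication by $w_0$ really does interchange minimal- and maximal-length representatives for the \emph{left} cosets $W/W_\lambda$ that appear in Proposition~\ref{adjacentcols}. This follows cleanly from $\ell(w_0 x)=\ell(w_0)-\ell(x)$ once one observes that the lengths occurring in a coset $C$ become $\ell(w_0)-\ell(c)$ for $c\in C$ after applying $w_0$, so that the minimizer over $C$ becomes the maximizer over $w_0C$. I do not anticipate any further obstacle.
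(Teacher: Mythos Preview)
Your proof is correct and follows essentially the same route as the paper: pick a state $\mathfrak{s}$ with $\mathfrak{T}(\mathfrak{s})'=v$, use Proposition~\ref{statedpf} and Theorem~\ref{crystalmt} to identify $w=w_0\omega(v)$, apply Proposition~\ref{adjacentcols} to see $w$ is shortest in its coset, and then translate by $w_0$. The paper compresses the last step into a single clause, whereas you spell out the length-reversal argument explicitly, but the logic is identical.
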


\begin{proof}
  Let $\mathfrak{s}$ be the state such that $\mathfrak{T} (\mathfrak{s})' =
  v$. Then $\mathfrak{s} \in \mathfrak{S}_{\mathbf{z}, \lambda, w}$ with $w = w_0 \omega(v)$ by
  Proposition~\ref{statedpf} and Theorem~\ref{crystalmt}. Thus, according to Proposition~\ref{adjacentcols}, $w_0 \omega (v)$ is the shortest element in its
  coset and therefore $\omega (v)$ is the longest element of its coset.
\end{proof}

\section{\label{sec:binf}Demazure atoms in $\mathcal{B}_\infty$}

Littelmann~\cite{LittelmannYT} proved the refined Demazure character formula
Theorem~\ref{demazurecrystals} using tableaux methods in many
cases. Kashiwara~{\cite{KashiwaraDemazure}} used two innovations in proving
it completely for symmetrizable Kac-Moody Lie algebras.

The first innovation in~\cite{KashiwaraDemazure} is to
prove the formula indirectly by working not with $\mathcal{B}_\lambda$
but with the infinite crystal $\mathcal{B}_\infty$ that is the
crystal base of a Verma module. Thus Theorem~\ref{demazurecrystals}
is true for $\mathcal{B}_\infty$ as well as $\mathcal{B}_\lambda$ 
meaning that we also have Demazure crystals $\mathcal{B}_\infty(w)$ for $\mathcal{B}_\infty$.
One may embed $\mathcal{B}_{\lambda}$ into $\mathcal{B}_\infty$,
and the preimage of the Demazure crystal $\mathcal{B}_\infty(w)$
in $\mathcal{B}_\infty$ is the Demazure crystal $\mathcal{B}_\lambda(w)$.
In {\cite{KashiwaraDemazure,BumpSchilling,JosephConsequences}} proofs of the
refined Demazure character formula proceed by proving a version on
$\mathcal{B}_\infty$ first.

The second innovation in~\cite{KashiwaraDemazure} is to
make use of an involution $\star$ which, as we will explain,
interchanges two natural parametrizations of the crystal
by elements of a convex cone in~$\mathbb{Z}^N$.

We will use both of these ideas from~\cite{KashiwaraDemazure}, namely to lift
the problem to $\mathcal{B}_\infty$ crystal and to exploit the properties of
the $\star$-involution, in proving Theorem~\ref{daform}. Two references
adopting a point of view similar to Kashiwara's are Bump and Schilling~\cite{BumpSchilling}
and Joseph~\cite{JosephConsequences}. Both these references treat the
Demazure character formula in the context of $\mathcal{B}_\infty$
and the $\star$-involution.

The notion of crystal Demazure atoms can be adapted to $\mathcal{B}_\infty$; we define
these to be subsets $\mathcal{B}^\circ(w)$ that are disjoint and
satisfy (\ref{eqatomic}).  By Lemma~\ref{lem:uniqueatomic} these conditions
determine the atoms, and at least for type~A, the existence of a family of
crystal Demazure atoms for $\mathcal{B}_\infty$ will be proved in Corollary~\ref{cor:binfda} in the next
section.

The characterizations of $\mathcal{B}(w)$ and $\mathcal{B}^\circ(w)$
in terms of the function $\omega$ translates readily to $\mathcal{B}_\infty$.
The $\star$-involution of $\mathcal{B}_\infty$ is not a crystal graph
automorphism, but it has other important properties.  In particular, it maps
the Demazure crystal $\mathcal{B} (w)$ into $\mathcal{B}(w^{- 1})$. So using
the $\star$-involution we are able to reformulate Theorem~\ref{daform}, or more precisely the
corresponding identity for $\mathcal{B}_\infty(w)$, as the identity
\begin{equation}
\label{daformstar}
\mathcal{B}_\infty(w^{-1})=\{v\in\mathcal{B}\,|\,w_0\omega(v^\star)\leqslant w\}.
\end{equation}
The definition of $\omega$ for $\mathcal{B}_\lambda$ was given in
terms of Gelfand-Tsetlin patterns, but it may be restated in
terms of string data (\ref{bzlpat}). As we will explain later in this section,
the $\star$-involution transforms the string data into other natural data. (See (\ref{stringbinf}).)
In Lemma~\ref{daggerlem} below we have an explicit formula for $\omega(v^\star)$ in
terms of this data. Thus (\ref{daformstar}) becomes amenable to proof. The
main details are in the proof of Lemma~\ref{omstarrec}, which contains
partial information about how $\omega(v^\star)$ changes when $f_k$
is applied to $v$. The proof of this Lemma is technical, but the starting
point is the formula (\ref{fmaxtens}) for $f_k(v)$ in terms of data that we
have in hand due to Lemma~\ref{lem:techlem}. Once Lemma~\ref{omstarrec} is
proved, we conclude this section with Lemma~\ref{lem:B-inclusion} which makes progress towards showing (\ref{daformstar}) by proving the inclusion of the left-hand side in the right-hand side.

Then, using the information that we have obtained from
the Yang-Baxter equation in Theorem~\ref{zdematoms}, we can leverage this
inclusion to prove Theorem~\ref{daform} in Section~\ref{sec:dafpro}. Note that this is a statement
about $\mathcal{B}_\lambda$, not $\mathcal{B}_\infty$. Equation
(\ref{daformstar}) is equivalent to Theorem~\ref{thm:binfdc}, which
is proved after Theorem~\ref{daform} by going back to $\mathcal{B_\infty}$.
Theorem~\ref{thm:binfdc} would of course imply Theorem~\ref{daform}, but we prove
Theorem~\ref{daform} first where we can apply Theorem~\ref{zdematoms}. Thus we go
back and forth between $\mathcal{B}_\infty$ and $\mathcal{B}_\lambda$ in order
to prove everything. Finally in Corollary~\ref{cor:binfda} we obtain
a characterization of crystal Demazure atoms in $\mathcal{B}_\infty$.
\medskip

In {\cite{KashiwaraDemazure,BumpSchilling}}, the construction of
$\mathcal{B}_\infty$ depends on the choice of a reduced decomposition of the
long Weyl group element $w_0 = s_{i_1} \cdots s_{i_N}$. A main feature of the
theory is that the crystal is independent of this choice of decomposition; to
change to another reduced decomposition one may apply piecewise linear maps to
all data.  On the other hand, Littelmann~{\cite{LittelmannCones}} showed that
one particular choice of reduced word is especially nice, and it is this
Littelmann word that is important for us. Given this choice, elements of the
crystal are parametrized by data from which we can read off the Demazure
atoms.

We recall Kashiwara's definition of $\mathcal{B}_\infty$ for an arbitrary
Cartan type before specializing to the $\GL (r)$ (Cartan type~$A_{r - 1}$)
crystal. (For further details and proofs see \cite{KashiwaraDemazure} and Chapter~12 of
{\cite{BumpSchilling}}.)

If $i \in I$, the index set for the simple reflections, let
$\mathcal{B}_i$ be the elementary crystal defined in \cite{KashiwaraDemazure}
Example~1.2.6 or {\cite{BumpSchilling}} Section~12.1.
This crystal has one element $u_i (a)$ of weight $a \alpha_i$ for every
$a \in \mathbb{Z}$ on which the crystal operators $e_i$ and $f_i$ act as $e_i(u_i(a)) = u_i(a+1)$ and $f_i(u_i(a)) = u_i(a-1)$.
Let $\mathbf{i}= (i_1, \cdots, i_N)$ be a sequence of
indices such that $w_0 = s_{i_N} \cdots s_{i_1}$ is a reduced expression of
the long Weyl group element and let
\[\mathcal{B}_{\mathbf{i}}=
\mathcal{B}_{i_1}\otimes\cdots\otimes\mathcal{B}_{i_N}\;.\]

\begin{remark}
We recall that there is a difference between notation for tensor product of
crystals between~\cite{KashiwaraDemazure} and {\cite{BumpSchilling}}. We
will follow the second reference, so to read Kashiwara or Joseph, reverse the
order of tensor products, interpreting $x \otimes y$ as $y \otimes x$.
\end{remark}

Let
$u_0 = u_{i_1} (0) \otimes \cdots \otimes u_{i_N}
(0)\in\mathcal{B}_{\mathbf{i}}$, and let $\mathfrak{C}_{\mathbf{i}}$ be the
subset of $\mathbb{Z}^N$ consisting of all elements $\mathbf{a}= (a_1, \cdots,
a_N)$ such that
\begin{equation}
  \label{kashiwaradata}
  u_{\mathbf{i}} (\mathbf{a})
  = u (\mathbf{a}) = u_{i_1} (- a_1) \otimes \cdots \otimes
  u_{i_N} (- a_N)
\end{equation}
can be obtained from $u_0$ by applying some succession of
crystal operators $f_i$. Then $\mathfrak{C}_{\mathbf{i}}$ is the set
of integer points in a convex polyhedral cone in $\mathbb{R}^N$.
We regard $\mathfrak{C}_{\mathbf{i}}$, embedded via
the map $\mathbf{a} \mapsto u (\mathbf{a})$ to be a subcrystal of
$\mathcal{B}_{\mathbf{i}}$; this requires redefining $e_i (v) = 0$ if
$\varepsilon_i (v) = 0$. With this exception, the Kashiwara operators $e_i$,
$f_i$, $\varepsilon_i$ and $\varphi_i$ are the same as for the ambient crystal
$\mathcal{B}_{\mathbf{i}}$. If $\mathbf{j}$ is another reduced expresion for
$w_0$ then there is a piecewise-linear bijection $\mathfrak{C}_{\mathbf{i}}
\longrightarrow \mathfrak{C}_{\mathbf{j}}$ that is an isomorphism of crystals;
in this sense the crystal $\mathfrak{C}_{\mathbf{i}}$ is independent of the
choice of word $\mathbf{i}$. The crystal $\mathcal{B}_\infty$ is
defined to be this crystal.

In $\mathcal{B}_\infty$ the element $u_0$ is the unique highest weight
element, and the unique element of weight $0$. If $x\in\mathcal{B}_\infty$
then, as with the finite crystals $\mathcal{B}_\lambda$,
the integer $\varepsilon_i(x)$ is nonnegative and equals the number of times
$e_i$ may be applied to $x$, i.e. $\varepsilon_i(x)=\max\{k|e_i^k(x)=0\}$.  On
the other hand $f_i(x)$ is never $0$, so $\varphi_i(x)$ has no such
interpretation. It still has meaning and the identity
$\varphi_i(x)-\varepsilon_i(x)=\langle\wt(x),\alpha_i^\vee\rangle$ holds.

Because $f_i(x)$ is never $0$, the string patterns
$\bzl_{\mathbf{i}}^{(f)}(v)$ cannot be defined for $\mathcal{B}_\infty$ since
the sequence $f_i^kv$ never terminates. However $\bzl_{\mathbf{i}}^{(e)}(v)$
can be defined by (\ref{plstringe}).
Interestingly, for each reduced word $\mathbf{i}$
representing $w_0$, the set $\{\bzl_{\mathbf{i}}^{(e)}(v) \mid
v\in\mathcal{B}_\infty\}$ coincides with the cone
$\mathfrak{C}_{\mathbf{i}}$. However the data $\mathbf{a}$ such that
(\ref{kashiwaradata}) holds is \textit{not} the string data. Rather, there is
a weight-preserving bijection $\star:\mathcal{B}_\infty\to\mathcal{B}_\infty$
of order two such that
\begin{equation}
\label{stringbinf}
\mathbf{a}=\bzl^{(e)}_{\mathbf{i}}(v^\star), \qquad
v=u_{\mathbf{i}}(\mathbf{a})\;.
\end{equation}
This is true for any reduced word $\mathbf{i}$, and $\star$ is independent of
$\mathbf{i}$. This is
Kashiwara's \textit{$\star$-involution}. See~\cite{KashiwaraDemazure},
\cite{BumpSchilling} and~\cite{JosephConsequences}. Equation
(\ref{stringbinf}) is Theorem~14.16 in~\cite{BumpSchilling}, or
see the proof of Proposition~3.2.3 in~\cite{JosephConsequences}.

Let $\lambda$ be a dominant weight. There is a crystal $\mathcal{T}_{\lambda}$
with a single element $t_{\lambda}$ of weight $\lambda$; then
$\mathcal{T}_{\lambda} \otimes \mathcal{B}_\infty$ is a crystal identical to
$\mathcal{B}_{\infty}$ except that the weights of its elements are all
shifted by $\lambda$. Thus its highest weight element is $t_{\lambda} \otimes
u_0$ with weight $\lambda$. If $\mathcal{B}_{\lambda}$ is the crystal with highest weight $\lambda$,
then $\mathcal{B}_{\lambda}$ may be embedded in $\mathcal{T}_{\lambda} \otimes
\mathcal{B}_\infty$ by mapping the highest weight vector $v_{\lambda}$ to
$t_{\lambda} \otimes u_0$. Let $\psi_\lambda:\mathcal{B}_\lambda\to\mathcal{B}_\infty$
be the map such that $v\mapsto t_\lambda\otimes\psi_\lambda(v)$ is this
embedding of crystals.

Demazure crystals are defined for $\mathcal{B}=\mathcal{B}_\infty$ as follows.
If $w=1$ then $\mathcal{B}(w)=\{u_0\}$. Then recursively: if $s_i$ is a simple
reflection such that $s_iw>w$ we define $\mathcal{B}(s_iw)$ to be the set of
all $v\in\mathcal{B}$ such that $e_i^kv\in\mathcal{B}(w)$ for some
$k\geqslant0$. Theorem~\ref{demazurecrystals}~\ref{itm:crystals} remains valid
for $\mathcal{B}_\infty$. The theory of Demazure crystals for
$\mathcal{B}_\infty$ is related to the theory for $\mathcal{B}_\lambda$
by the fact that $\mathcal{B}_\lambda(w)$ is the preimage of the corresponding
$\mathcal{B}_\infty$ Demazure crystal under the embedding of
$\mathcal{B}_\lambda$ into $\mathcal{T}_\lambda\otimes\mathcal{B}_\infty$.
See~\cite{KashiwaraDemazure} and~\cite{BumpSchilling} Chapters~12 and~13.

Now we specialize to $\GL(r)$ crystals; the Cartan type is $A_{r-1}$.
If we use either the Littelmann word (\ref{littstr}) or $\mathbf{i}'$
in (\ref{litstrp}) then the cone $\mathfrak{C}_{\mathbf{i}}$ is characterized by
the inequalities (\ref{litcone}). See~\cite{LittelmannCones} Theorem~5.1
or~\cite{wmd5book}, Proposition~2.2. Now $\psi_\lambda$ is a crystal
morphism, so if $v\in\mathcal{B}_\lambda$ then
\[\bzl_{\mathbf{i}'}^{(e)}(\psi_\lambda(v))=
\bzl_{\mathbf{i}'}^{(e)}(v).\]
Thus we may define $\omega:\mathcal{B}_\infty\to W$
by (\ref{ompdef}) and if $v\in\mathcal{B}_\lambda$ then
$\omega(\psi_\lambda(v))=\omega(v)$. Then we may define
$\mathcal{B}^\circ(w)$ by \eqref{bcircdef} also for $\mathcal{B}=\mathcal{B}_\infty$
and $\mathcal{B}_\lambda^\circ(w)$ is the preimage of
$\mathcal{B}_\infty^\circ(w)$ under the map $\psi_\lambda$.

Let $\mathbf{i}$ be as in \eqref{litstrp} so that $\mathbf{i}'=(1,2,1,3,2,1,\cdots)$. Let
\begin{equation}
\label{vddefa}
v = D_1 \otimes \cdots \otimes D_{r -
1} \in \mathcal{B}_\infty \subset \mathcal{B}_\mathbf{i},\qquad
D_i \in \mathcal{B}_{r - i} \otimes \cdots \otimes \mathcal{B}_{r - 1}.
\end{equation}
Specifically we may write
\begin{equation}
\label{vddefb}
D_i = D_i (v) = u_{r - i} (- d_{i, r - i}) \otimes \cdots \otimes u_{r - 1}
   (- d_{i, r - 1}) = \bigotimes_{j = r - i}^{r - 1} u_j (- d_{i, j}) .
\end{equation}
Remembering (\ref{stringbinf}), for $v$ to be in $\mathcal{B}_\infty$ the
entries $d_{i j} = d_{ij} (v)$ must lie in the Littelmann cone
(\ref{litcone}), which in our present notation is determined by the
inequalities
\[ d_{i, j} \geqslant d_{i, j + 1}, \qquad (r - i \leqslant j \leqslant i) .
\]
Let $c_{i, j} = c_{i, j} (v) = d_{i, j} - d_{i, j + 1} \geqslant 0$.

\begin{remark}
  \label{remconvention}Initially $d_{i, j}$ is defined if $r - i \leqslant j
  \leqslant r - 1$ but we extend this to $j = r$ with the convention that
  $d_{i, r} = 0$. Hence by this convention $c_{i, r - 1} = d_{i, r - 1}$. \
  This convention will prevent certain cases having to be treated separately.
\end{remark}

By \cite{BumpSchilling} Lemma~2.33 the function $\varphi_k$ (part of
the data defining a crystal) is given by
\begin{equation}
  \label{phimaxtens} \varphi_k (v) = \max_i (\Phi_{i, k} (v))
\end{equation}
where
\[ \Phi_{i, k} = \Phi_{i, k} (v) = \varphi_k (D_i) + \sum_{\ell < i} \langle
   \wt (D_{\ell}), \alpha_k^{\vee} \rangle \; . \]
\begin{lemma}
  \label{lem:techlem}
  Assume that $r - k \leqslant i \leqslant r - 1$. Then
  \begin{equation}
    \label{phievv} \varphi_k (D_i) = \left\{\begin{array}{ll}c_{i, k - 1} &
    \text{if $k>r-i$;}\\-d_{i,k}&\text{if $k=r-i$.}\end{array}\right.
  \end{equation}
  and
  \begin{equation}
    \label{majdiff} \Phi_{i, k} - \Phi_{i + 1, k} = c_{i, k} - c_{i + 1, k -
    1} .
  \end{equation}
\end{lemma}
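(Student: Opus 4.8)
The plan is to read off both identities directly from the tensor-product description of $\varphi_k$ in (\ref{phimaxtens}), applied now \emph{inside} the factor $D_i=\bigotimes_{j=r-i}^{r-1}u_j(-d_{i,j})$, together with the elementary-crystal data $\wt(u_j(a))=a\alpha_j$, $\varphi_j(u_j(a))=a$, $\varepsilon_j(u_j(a))=-a$, and $\varphi_k(u_j(a))=\varepsilon_k(u_j(a))=-\infty$ for $k\neq j$. Since (\ref{phimaxtens}) writes $\varphi_k$ of a tensor product as $\max_m\bigl(\varphi_k(m\text{-th factor})+\sum_{\ell<m}\langle\wt(\ell\text{-th factor}),\alpha_k^\vee\rangle\bigr)$, the hypothesis $r-k\le i\le r-1$ guarantees that $k$ occurs among the indices $r-i,\dots,r-1$, and every factor of the wrong colour contributes $-\infty$ to the maximum. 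Hence the max collapses to the single term coming from the factor $u_k(-d_{i,k})$.

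First I would prove (\ref{phievv}). The surviving term is $\varphi_k(u_k(-d_{i,k}))+\sum_{r-i\le\ell<k}\langle\wt(u_\ell(-d_{i,\ell})),\alpha_k^\vee\rangle=-d_{i,k}+\sum_{\ell<k}(-d_{i,\ell})\langle\alpha_\ell,\alpha_k^\vee\rangle$. In type $A_{r-1}$ the only nonzero off-diagonal pairing among these is $\langle\alpha_{k-1},\alpha_k^\vee\rangle=-1$, so the sum reduces to the single contribution $d_{i,k-1}$ precisely when the factor $u_{k-1}$ is present, i.e. when $k-1\ge r-i$. This is exactly the dichotomy in (\ref{phievv}): if $k>r-i$ then $\varphi_k(D_i)=-d_{i,k}+d_{i,k-1}=c_{i,k-1}$, while if $k=r-i$ the left neighbour is absent and $\varphi_k(D_i)=-d_{i,k}$ (here $c_{i,j}=d_{i,j}-d_{i,j+1}$, with the convention $d_{i,r}=0$ of Remark~\ref{remconvention}).

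For (\ref{majdiff}) I would first telescope the weight sums. Subtracting the two instances of $\Phi_{\bullet,k}=\varphi_k(D_\bullet)+\sum_{\ell<\bullet}\langle\wt(D_\ell),\alpha_k^\vee\rangle$ leaves only the $\ell=i$ term, giving $\Phi_{i,k}-\Phi_{i+1,k}=\varphi_k(D_i)-\langle\wt(D_i),\alpha_k^\vee\rangle-\varphi_k(D_{i+1})$. The first two terms combine, via the crystal identity $\varphi_k-\varepsilon_k=\langle\wt,\alpha_k^\vee\rangle$, into $\varepsilon_k(D_i)$, so the difference equals $\varepsilon_k(D_i)-\varphi_k(D_{i+1})$. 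It then remains to evaluate the two pieces: $\varphi_k(D_{i+1})=c_{i+1,k-1}$ is immediate from (\ref{phievv}), since $k\ge r-i>r-(i+1)$ puts us in its first case, and a computation like the one above---now using $\langle\alpha_k,\alpha_k^\vee\rangle=2$ and $\langle\alpha_{k+1},\alpha_k^\vee\rangle=-1$---gives $\varepsilon_k(D_i)=\varphi_k(D_i)-\langle\wt(D_i),\alpha_k^\vee\rangle=c_{i,k}$. Combining yields $\Phi_{i,k}-\Phi_{i+1,k}=c_{i,k}-c_{i+1,k-1}$.

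The only real subtlety---and the step I would be most careful about---is the boundary bookkeeping: the case $k=r-i$, where the left-neighbour index $k-1$ falls outside the range $[r-i,r-1]$ of $D_i$, and the endpoint $k=r-1$, where the convention $d_{i,r}=0$ must be invoked to kill the $d_{i,k+1}$ term. One checks that in the $k=r-i$ case the missing $d_{i,k-1}$ contribution drops out of $\varphi_k(D_i)$ and of $\langle\wt(D_i),\alpha_k^\vee\rangle$ simultaneously, so that $\varepsilon_k(D_i)=c_{i,k}$ and hence (\ref{majdiff}) hold uniformly; this is exactly why the two cases of (\ref{phievv}) collapse to a single formula for the difference. (Implicitly (\ref{majdiff}) is used for $i\le r-2$, so that $D_{i+1}$ exists.)
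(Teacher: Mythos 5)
Your proof is correct and follows essentially the same route as the paper's: both apply the tensor-product formula for $\varphi_k$ (Lemma~2.33 of \cite{BumpSchilling}, i.e.\ (\ref{phimaxtens})) inside $D_i$, collapse the maximum to the single $j=k$ factor, evaluate the Cartan pairings, telescope the weight sums for (\ref{majdiff}), and dispose of the boundary cases $k=r-i$ and $k=r-1$ via the absent left neighbour and the convention $d_{i,r}=0$. Your repackaging of $\varphi_k(D_i)-\langle\wt(D_i),\alpha_k^\vee\rangle$ as $\varepsilon_k(D_i)=c_{i,k}$ is only a notational variant of the paper's direct computation $\langle\wt(D_i),\alpha_k^\vee\rangle=c_{i,k-1}-c_{i,k}$.
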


\begin{proof}
  First assume that $r-k+1\leqslant i\leqslant r-1$. Then using
  Lemma~2.33 of~\cite{BumpSchilling} again,
   $\varphi_k (D_i)$ is the maximum over $r - i \leqslant j \leqslant r - 1$ of
  \[ \varphi_k (u_j (- d_{i, j})) + \Bigl\langle \sum_{r - i \leqslant \ell <
     j} - d_{i, \ell} \alpha_{\ell}, \, \alpha_k^{\vee} \Bigr\rangle . \]
  By the definition of the elementary crystal ({\cite{BumpSchilling}}
  Section~12.1) we have $\varphi_k (u_j (- d_{i, j})) = - \infty$ unless $j =
  k$, so
  \[ \varphi_k (D_i) = \varphi_k (u_k (- d_{i, k})) + \Bigl\langle \sum_{r - i
     \leqslant \ell < k} - d_{i, \ell} \alpha_{\ell}, \, \alpha_k^{\vee}
     \Bigr\rangle = - d_{i, k} + d_{i, k - 1} = c_{i, k - 1} \]
  proving (\ref{phievv}). Here we have used the fact that $\varphi_k(u_k(-a)) = -a$, as well as $\langle
  \alpha_{\ell}, \alpha_k^{\vee} \rangle = - 1$ if $l = k \pm 1$ and $2$ if $l
  = k$, and $0$ otherwise. Now
  \[ \Phi_{i, k} - \Phi_{i + 1, k} = \varphi_k (D_i) - \varphi_k (D_{i + 1}) -
     \langle \wt (D_i), \alpha_k^{\vee} \rangle \]
  and with $r - k + 1 \leqslant i \leqslant r - 1$ we have (using
  Remark~\ref{remconvention} if $k = r - 1$)
  \[ \langle \wt (D_i), \alpha_k^{\vee} \rangle = d_{i, k - 1} - 2 d_{i,
     k} + d_{i, k + 1} = c_{i, k - 1} - c_{i, k} . \]
  Combining this with (\ref{phievv}) we obtain (\ref{majdiff}).
  The case $k=r-i$ is similar, except that $d_{i,k-1}$ is
  replaced by zero where it appears.
\end{proof}

We now wish to use some nondescending products.
We will use the notation of Remark~\ref{heckecompute}. Let
\begin{equation}
  \label{omegaomg}
   \Omega_i (D_i) = \prod_{\substack{
     1 \leqslant j \leqslant i\\
     c_{i, r - 1 + j - i} = 0}} S_j\;.
\end{equation}
Define $\omega^\dagger : \mathcal{B}_\infty \to W$ by 
\begin{equation}
  \label{dagdef}
  \omega^\dagger (v) = \{\Omega_{r - 1} (D_{r - 1}) \cdots \Omega_1 (D_1)\}\;.
\end{equation}
From Remark~\ref{heckecompute} the brackets $\{\cdot\}$ here mean that
the product is taken in the degenerate Hecke algebra, then the
resulting basis vector is replaced by the corresponding Weyl
group element.

\begin{lemma} We have
  \label{daggerlem}
  \[\omega^\dagger(v)=\omega(v^\star)^{-1}.\]
\end{lemma}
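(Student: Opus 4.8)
The plan is to observe that both sides of the asserted identity are built from the \emph{same} family of circled reflections, read off in opposite orders, and then to turn the order-reversal into inversion in $W$ via the degenerate Hecke algebra.

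First I would pin down the data. By Kashiwara's $\star$-involution property (\ref{stringbinf}), applied to the reduced word $\mathbf{i}$ of (\ref{litstrp}) used in (\ref{vddefa})--(\ref{vddefb}) to parametrize $v=D_1\otimes\cdots\otimes D_{r-1}$, the Kashiwara coordinates of $v$ are the string data of $v^\star$:
\[
\bzl^{(e)}_{\mathbf{i}}(v^\star)=\bigl(d_{1,r-1};\,d_{2,r-2},d_{2,r-1};\,d_{3,r-3},d_{3,r-2},d_{3,r-1};\,\cdots\bigr).
\]
Since $\omega$ is defined in (\ref{ompdef}) through the string pattern $\bzl^{(e)}$ relative to this very word, $\omega(v^\star)$ is computed directly from the $d_{i,j}$. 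Arranging the string pattern in the array (\ref{bzlpat}), the $i$-th row from the bottom is the block $D_i$, i.e.\ $(d_{i,r-i},\dots,d_{i,r-1})$ read left to right, and the cone inequalities (\ref{litcone}) become $d_{i,r-i}\geqslant\cdots\geqslant d_{i,r-1}\geqslant 0$.

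Next I would translate the Circling Rule~\ref{circlingrule}. The entry $d_{i,j}$ sits at the right end of its row exactly when $j=r-1$, where it is circled iff $d_{i,r-1}=0$; otherwise it is circled iff $d_{i,j}=d_{i,j+1}$. In every case the condition is $c_{i,j}=d_{i,j}-d_{i,j+1}=0$, using $d_{i,r}=0$ (Remark~\ref{remconvention}). Transferring circles to the reflection array (\ref{circledreflections}), whose $i$-th row is $s_1,s_2,\dots,s_i$ from left to right, the reflection $s_j$ in row $i$ is circled iff $c_{i,r-1+j-i}=0$ --- precisely the index set defining $\Omega_i(D_i)$ in (\ref{omegaomg}). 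Hence $\omega(v^\star)$ and $\omega^\dagger(v)$ record the very same collection of circled reflections.

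It then remains to compare orders. Using Remark~\ref{heckecompute} to rewrite nondescending products as products in $\mathcal{H}$ followed by $\{\cdot\}$, we have $\omega(v^\star)=\{S_{j_1}\cdots S_{j_k}\}$, where $(s_{j_1},\dots,s_{j_k})$ is the circled subsequence listed in the order $(s_{i_1},s_{i_2},\cdots)=(s_1;\,s_2,s_1;\,s_3,s_2,s_1;\,\cdots)$ of (\ref{littstr})--(\ref{circledreflections}), that is, bottom-to-top with each row read right-to-left. On the other hand $\omega^\dagger(v)=\{\Omega_{r-1}(D_{r-1})\cdots\Omega_1(D_1)\}$ lists the circled reflections top-to-bottom with each row read left-to-right. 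A direct check on the triangular layout shows these two total orders on the $N=r(r-1)/2$ cells are exact reverses, so $\omega^\dagger(v)=\{S_{j_k}\cdots S_{j_1}\}$. Finally I would invoke the anti-automorphism of $\mathcal{H}$ fixing each generator $S_i$ (it exists because $S_i^2=S_i$ and the braid relations are invariant under word reversal); it sends $S_w\mapsto S_{w^{-1}}$, whence $\{S_{j_k}\cdots S_{j_1}\}=\{S_{j_1}\cdots S_{j_k}\}^{-1}$, giving $\omega^\dagger(v)=\omega(v^\star)^{-1}$.

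The main obstacle is bookkeeping rather than depth: one must keep the two reduced words straight (the block decomposition of $v$ uses the word of (\ref{litstrp}), the reverse of the Littelmann word), correctly match the left--right orientation of the string array to the reflection array, verify the circling condition reduces to $c_{i,j}=0$, and confirm that the traversals underlying $\omega$ and $\omega^\dagger$ are precisely opposite. The one genuinely structural input is the order-reversal $\leftrightarrow$ inversion principle for $\{\cdot\}$, which is the standard fact that word reversal is an anti-automorphism of the $0$-Hecke monoid sending $S_w$ to $S_{w^{-1}}$.
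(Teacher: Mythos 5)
Your proof is correct and follows essentially the same route as the paper's: identify the Kashiwara coordinates $d_{i,j}$ of $v$ with the string data of $v^\star$ via (\ref{stringbinf}), observe that circling amounts to $c_{i,j}=0$ so that the circled reflections are exactly the factors appearing in (\ref{omegaomg})--(\ref{dagdef}), and note that the two products list these factors in opposite orders. The only difference is that you justify the final step $\{S_{j_k}\cdots S_{j_1}\}=\{S_{j_1}\cdots S_{j_k}\}^{-1}$ explicitly via the reversal anti-automorphism of the degenerate Hecke algebra, a point the paper's proof asserts without comment.
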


\begin{proof}
  By (\ref{stringbinf}), the string pattern
  $\bzl_{\mathbf{i}'}^{(e)}(v^\star)$ is the sequence $(b_1,b_2,\ldots)$ such that
  \[v=u_{i_1'}(-b_1)\otimes u_{i_2'}(-b_2)\otimes\cdots\;\, .\]
  Put these into an array as in (\ref{bzlpat}) and circle entries as in
  Circling Rule~\ref{circlingrule}. Thus the $b_k$ are the $d_{i,j}$ in
  the order determined by (\ref{vddefa}) and (\ref{vddefb}). Since
  $c_{i,j}=d_{i,j}-d_{i,j+1}$ (with the caveat in Remark~\ref{remconvention})
  we see that if $b_k$ equals $d_{i,j}$, it is circled if and only if $c_{i,j}=0$.  
  Recall that
  \[\omega(v^\star)=\Pind(s_{j_1},\cdots,s_{j_k})=\{S_{j_1}\cdots S_{j_k}\}\]
  where $s_{j_1},s_{j_2},\cdots$ are the circled elements. Now
  $S_{j_1},S_{j_2},\cdots$ are exactly the entries that appear in the product
  (\ref{dagdef}), but they appear in reverse order; so what we
  get is $\omega(v^\star)^{-1}$.
\end{proof}

\begin{lemma}
  \label{omstarrec}
  We have either $\omega^\dagger (v) = \omega^\dagger (f_k v)$ or
  $\omega^\dagger (v) = s_k \omega^\dagger (f_k v)$.
\end{lemma}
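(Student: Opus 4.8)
The plan is to evaluate $\omega^\dagger(f_kv)$ directly from the definition \eqref{dagdef} and to compare it with $\omega^\dagger(v)$, by tracking how a single application of $f_k$ perturbs the data $c_{i,j}=c_{i,j}(v)$ that control the factors $\Omega_i(D_i)$. First I would pin down where $f_k$ acts: by the tensor product rule for Kashiwara operators in the Bump--Schilling convention, encoded in \eqref{phimaxtens} together with the explicit description of $f_k(v)$ in \eqref{fmaxtens}, the operator changes only one tensor factor $D_{i_0}$, where $i_0$ is the maximizing index for $\Phi_{i,k}(v)$ singled out there; concretely it sends $d_{i_0,k}\mapsto d_{i_0,k}+1$ and fixes every other $d_{i,j}$. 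Hence among the $c_{i,j}=d_{i,j}-d_{i,j+1}$ only $c_{i_0,k}$ (which increases by $1$) and $c_{i_0,k-1}$ (which decreases by $1$) move; when $k=r-i_0$ the convention of Remark~\ref{remconvention} shows only $c_{i_0,k}$ is affected. Since $f_kv$ must again lie in the cone \eqref{litcone}, we must have had $c_{i_0,k-1}\geqslant1$ beforehand. In particular, in the product \eqref{dagdef} only the single factor $\Omega_{i_0}(D_{i_0})$ changes.

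Next I would localize the change inside $\Omega_{i_0}$. Reading \eqref{omegaomg}, the condition $c_{i_0,k}=0$ contributes the letter $S_a$ with $a=i_0+k-r+1$, while $c_{i_0,k-1}=0$ contributes the adjacent letter $S_{a-1}$. Enumerating how the pair $(c_{i_0,k-1},c_{i_0,k})$ crosses the threshold $0$ yields four cases, in which the two-letter window is replaced as $S_a\to S_{a-1}$, or $S_a\to 1$, or $1\to S_{a-1}$, or not at all. Writing $\omega^\dagger(v)=\{X_L\,M\,X_R\}$, where $X_L$ (resp.\ $X_R$) is the degenerate Hecke product of the letters standing to the left (resp.\ right) of this window, the lemma becomes the assertion that replacing $M$ by the new word $M'$ alters the image in $W$ under $\{\cdot\}$ either by nothing or by left multiplication by $s_k$.

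The engine is a transport identity for the left block $X_L=S_{w_L}$: I claim that $w_L(\alpha_a)$ is either the simple root $\alpha_k$ or a negative root, and similarly $w_L(\alpha_{a-1})$ is either $\alpha_{k-1}$ or negative. Granting this, $S_{w_L}S_a=S_kS_{w_L}$ whenever $w_L(\alpha_a)=\alpha_k$ (and $S_{w_L}S_a=S_{w_L}$ when $w_L(\alpha_a)$ is negative), so the modified letter can be slid to the far left and converted into the generator $S_k$; passing through $\{\cdot\}$ and using Remark~\ref{heckecompute} then produces exactly the dichotomy $\{P'\}$ versus $s_k\{P'\}$ in the removal/insertion cases $S_a\to 1$ and $1\to S_{a-1}$. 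I would establish the transport claim by induction on the number $r-1-i_0$ of factors $\Omega_{r-1},\dots,\Omega_{i_0+1}$ lying to the left, using that each such staircase raises the active index by one, and that the cone inequalities \eqref{litcone} determine precisely which letters are present.

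The hard part, and the step I expect to be the main obstacle, is reconciling the ``wrong'' index: the modified letters carry indices $a$ and $a-1$, so the swap case $S_a\to S_{a-1}$ naively threatens a left multiplication by $s_{k-1}$ rather than $s_k$, and the right block $X_R$ behaves differently for $v$ and for $f_kv$. Controlling this requires the maximality of $i_0$. Rewriting $\Phi_{i_0,k}\geqslant\Phi_{i_0+1,k}$ and (for the smallest maximizer) $\Phi_{i_0-1,k}<\Phi_{i_0,k}$ through \eqref{majdiff} gives sign conditions such as $c_{i_0-1,k}<c_{i_0,k-1}$ and $c_{i_0,k}\geqslant c_{i_0+1,k-1}$; these force the neighbouring factors $\Omega_{i_0\pm1}$ to contain or to omit the letters $S_{a-1},S_a$ in exactly the way needed to absorb the spurious $s_{k-1}$-contribution and to collapse the remaining cases into the claimed $s_k$-or-identity behavior. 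Assembling the transport identity, this maximality bookkeeping, and the four local cases (with the edge cases $k=r-i_0$ and $a=1$, where $S_{a-1}$ does not exist, treated separately) completes the proof.
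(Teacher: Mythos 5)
Your reduction of the problem is sound and matches the paper's own strategy up to a point: you correctly localize the effect of $f_k$ to a single block $D_{i_0}$ (the paper's $p$), so that only $c_{i_0,k}$ (up by one) and $c_{i_0,k-1}$ (down by one) move; your four-case window analysis for the letters $S_a$, $S_{a-1}$ with $a=i_0+k-r+1$ is exactly right; and your left-transport of the \emph{removed} letter $S_a$ is precisely the paper's argument for its block $\omega_1^\dagger$ — at each stage maximality of $\Phi_{i_0,k}$ via \eqref{majdiff} forces $c_{i_0+1,k-1}=0$, etc., so each step is either an absorption $S_wS_a=S_w$ or a braid step raising the index, and the letter emerges (if at all) as $S_k$. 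Two small corrections there: it is these maximality conditions, not the cone inequalities \eqref{litcone}, that tell you which letters are present in the neighbouring blocks; and in the edge case $k=r-i_0$ the quantity $c_{i_0,k-1}$ simply does not exist (Remark~\ref{remconvention} concerns the other edge $j=r$).

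The genuine gap is your treatment of the \emph{inserted} letter $S_{a-1}$. Your claimed transport identity — that $w_L(\alpha_{a-1})$ is either $\alpha_{k-1}$ or negative — is false, even with all maximality hypotheses in force. Take $r=4$, $k=3$, $i_0=3$, with $c_{3,1}=0$, $c_{3,2}=1$, $c_{3,3}=1$, $c_{2,2}=5$, $c_{2,3}=0$, $c_{1,3}=0$. Then $\Phi_{3,3}$ is the strict maximum of $\Phi_{i,3}$, we are in your pure-insertion case, and $\Omega_3=S_1$, $\Omega_2=S_2$, $\Omega_1=S_1$. Here $X_L=S_1$, so $w_L=s_1$ and $w_L(\alpha_{a-1})=s_1(\alpha_2)=\alpha_1+\alpha_2$, positive and non-simple: $X_LS_2=S_1S_2$ equals neither $S_2S_1$ nor $S_1$, so the inserted letter cannot be slid to the left at all. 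The lemma still holds — $\omega^\dagger(v)=\{S_1S_2S_1\}=s_1s_2s_1=\{S_1S_2S_2S_1\}=\omega^\dagger(f_3v)$ — but the cancellation happens on the \emph{right} ($S_2X_R=S_2S_2S_1=X_R$). Note also that this cannot be rescued by conditions on $\Omega_{i_0\pm1}$ as you propose: here $i_0=r-1$, so there are no blocks above, and the obstruction comes from a letter of $\Omega_{i_0}$ itself. The missing idea, which is how the paper proceeds, is that the two letters must be transported in \emph{opposite} directions: one first splits $\Omega_{i_0}$ into the sub-products of letters $<a$ and $\geqslant a$ (these commute because $S_{a-1}$ and $S_a$ are never simultaneously present, by $c_{i_0,k-1}(v)>0$ and $c_{i_0,k}(f_kv)>0$), then the removed $S_a$ travels up-left as you describe, while the inserted $S_{a-1}$ travels down-right through $\Omega_{i_0-1},\dots,\Omega_{r-k}$, where maximality (now in the form $c_{i_0-1,k}<c_{i_0,k-1}=1$, hence $c_{i_0-1,k}=0$, and so on) guarantees absorption or an index-lowering braid step at every stage, terminating unconditionally in $S_1\Omega_{r-k}=\Omega_{r-k}$. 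The insertion therefore never contributes any left factor, spurious or otherwise, and the dichotomy of the lemma comes from the removed letter alone.
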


\begin{proof}
  Let $p$ be the first value of $i$ where $\Phi_{i, k} (v)$ attains its maximum.
  By \cite{BumpSchilling} Lemma 2.33
  \begin{equation}
   \label{fmaxtens}
   f_k (v) = D_1 \otimes \cdots \otimes f_k (D_p) \otimes \cdots \otimes
     D_{r - 1} .
  \end{equation}
  Furthermore, by applying the same Lemma to $f_k(D_p)$ and using the fact
  that $\varphi_k (u_j (- d_{i, j})) = - \infty$ unless $j = k$ we have
  \begin{equation*}
    f_k(D_p) = u_{r - p} (- d_{p, r - i}) \otimes \cdots \otimes u_k(-d_{p,k} - 1) \otimes \cdots \otimes u_{r - 1}(- d_{p, r - 1})
  \end{equation*}
  meaning that $f_k$ acting on $v$ has the effect that $d_{p, k}$ is replaced by $d_{p, k} + 1$.
  
  We factor $\Omega_p (D_p) = \Omega_p' (D_p) \Omega''_p (D_p)$ where
  \[ \Omega_p' (D_p) = \prod_{\substack{1 \leqslant j \leqslant k + p - r\\
       c_{p, r - 1 + j - p} = 0}} S_j, \qquad \Omega''_p(D_p) = \prod_{\substack{
       k + p - r + 1 \leqslant j \leqslant p\\
       c_{p, r - 1 + j - p} = 0}} S_j\;.\]
  We will prove that
  \begin{equation}
    \label{omegacommute} \Omega_p' (D_p) \Omega''_p (D_p) = \Omega''_p (D_p)
    \Omega_p' (D_p), \qquad \Omega_p' (f_k D_p) \Omega''_p (f_k D_p) =
    \Omega''_p (f_k D_p) \Omega_p' (f_k D_p) .
  \end{equation}
  Indeed, every $S_j$ above with $1 \leqslant j \leqslant k + p - r$ commutes with
  every $S_{j'}$ with $k + p - r + 1 \leqslant j' \leqslant p$ with one
  possible exception: $S_{k + p - r}$ does not commute with $S_{k + p - r +
  1}$. These factors are both present if both $c_{p, k - 1} = c_{p, k} = 0$.
  Now since $i = p$ is the first value that maximizes $\Phi_{i, k}$ we have
  \begin{equation}
    \label{zpkid} 0 < \Phi_{p, k} - \Phi_{p - 1, k} = c_{p, k - 1} - c_{p - 1,
    k}
  \end{equation}
  by $\left( \ref{majdiff} \right)$. Now $c_{p - 1, k} \geqslant 0$ and so
  $c_{p, k - 1} > 0$. Hence $\Omega_p' (D_p)$ does not involve $S_{k + p -
  r}$, proving the first identity in (\ref{omegacommute}). On the other hand
  $d_{p, k} (f_k v) = d_{p, k} (v) + 1$ while $d_{p, j} (f_k v) = d_{p, j}
  (v)$ for all $j \neq k$. Therefore $c_{p, k} (f_k v) = c_{p, k} (v) + 1 > 0$
  and so $S_{k + p - r - 1}$ does not appear in $\Omega''_p (f_k D_p)$,
  proving the second identity in (\ref{omegacommute}).
  
  Now using (\ref{omegacommute}) we may rearrange the products and write
  $\omega^\dagger (v) = \{\omega_1^\dagger (v) \omega_2^\dagger (v) \omega_3^\dagger (v)\}$ where
  \[ \omega_1^\dagger (v) = \Omega_{r - 1} (D_{r - 1}) \cdots \Omega_{p + 1} (D_{p +
     1}) \Omega_p'' (D_p (v)), \]
  \[ \omega_2^\dagger (v) = \Omega_p' (D_p (v)) \Omega_{p - 1} (D_{p - 1}) \cdots
     \Omega_{r - k} (D_{r - k}), \]
  \[ \omega_3^\dagger (v) = \Omega_{r - k - 1} (D_{r - k - 1}) \cdots \Omega_1 (D_1),
  \]
  and similarly for $f_k v$. Here all factors $\Omega_i (D_i)$ with $i \neq p$
  are the same for $v$ and $f_k v$ so we omit the $v$ from the notation except
  when $i = p$. Then we trivially have that $\omega_3^\dagger(f_kv) = \omega_3^\dagger(v)$ and will show that
  \begin{equation}
    \label{firstomid} \omega_1^\dagger (f_kv) = \omega_1^\dagger (v) \quad \text{or} \quad S_k
    \omega_1^\dagger (f_k v)
  \end{equation}
  and
  \begin{equation}
    \label{secondomid} \omega_2^\dagger (f_k v) = \omega_2^\dagger (v) .
  \end{equation}
  The lemma will follow upon demonstrating these two identities.
  
  Let us prove (\ref{firstomid}). Since $c_{p, k} (f_k v) = c_{p, k} (v) + 1 > 0$ as shown above, we have that $\Omega_p'' (D_p (f_k v)) =
  \Omega''_p (f_k D_p (v)) = \Omega_p'' (D_p (v))$ unless $c_{p, k} = 0$. If
  this is true we are done, so we assume that $c_{p, k} = 0$. Then
  \[ \Omega_p'' (D_p) = S_{k + p - r + 1} \Omega_p'' (f_k D_p) . \]
  Thus what we must show is that either
  \begin{equation}
    \label{firstindc} \begin{array}{lll}
      \Omega_{r - 1} (D_{r - 1}) \cdots \Omega_{p + 1} (D_{p + 1}) S_{k + p -
      r + 1} & = & S_k \Omega_{r - 1} (D_{r - 1}) \cdots \Omega_{p + 1} (D_{p
      + 1})\\
      & \text{or} & \Omega_{r - 1} (D_{r - 1}) \cdots \Omega_{p + 1} (D_{p +
      1}) .
    \end{array}
  \end{equation}
  We will prove this, obtaining a series of inequalities along the way. First
  consider $\Omega_{p + 1} (D_{p + 1}) S_{k + p - r + 1}$. Let us argue
  that $\Omega_{p + 1} (D_{p + 1})$ involves $S_{k + p - r + 1}$. Indeed, its
  presence is conditioned on $c_{p + 1, k - 1} = 0$. Now since the first value
  where $\Phi_{i, k}$ attains its maximum is at $i = p$, we have $0 \leqslant
  \Phi_{p, k} - \Phi_{p + 1, k} = c_{p, k} - c_{p + 1, k - 1}$. Therefore
  $c_{p + 1, k - 1} \leqslant c_{p, k} = 0$, so $c_{p + 1, k - 1} = 0$. Thus
  $\Omega_{p + 1} (D_{p + 1})$ involves $S_{k + p - r + 1}$ and $c_{p + 1, k -
  1} = c_{p, k} = 0$. Now unless $c_{p + 1, k} = 0$, the product $\Omega_{p +
  1} (D_{p + 1})$ does not involve $S_{k + p - r + 2}$ and so $\Omega_{p + 1}
  (D_{p + 1}) = \cdots S_{k + p - r + 1} \cdots$, where the second ellipsis
  represents factors that all commute with $S_{k + p - r + 1}$. Therefore
  since $S_{k + p - r + 1}^2 = S_{k + p - r + 1}$ we have $\Omega_{p + 1}
  (D_{p + 1}) S_{k + p - r + 1} = \Omega_{p + 1} (D_{p + 1})$, and
  (\ref{firstindc}) is proved. This means that we may assume that $c_{p + 1,
  k} = 0$ and so $\Omega_{p + 1} (D_{p + 1}) = \cdots S_{k + p - r + 1} S_{k +
  p - r + 2} \cdots$ where again the second ellipsis represents factors that
  all commute with $S_{k + p - r + 1}$. Now we use the braid relation and
  write
  \[ \Omega_{p + 1} (D_{p + 1}) S_{k + p - r + 1} = \cdots S_{k + p - r + 1}
     S_{k + p - r + 2} \cdots S_{k + p - r + 1} = \cdots S_{k + p - r + 2}
     S_{k + p - r + 1} S_{k + p - r + 2} \cdots . \]
  The first ellipsis represents factors that commute with $S_{k + p - r + 2}$
  and so we obtain
  \[ \Omega_{p + 1} (D_{p + 1}) S_{k + p - r + 1} = S_{k + p - r + 2}
     \Omega_{p + 1} (D_{p + 1}) . \]
  We wish to repeat the process so we consider now $\Omega_{p + 2} (D_{p + 2})
  S_{k + p - r + 2}$. To continue, we need to know that $c_{p + 2, k - 1} =
  0$. Because the first value where $\Phi_{i, k}$ attains its maximum is at $i
  = p$, we have $0 \leqslant \Phi_{p, k} - \Phi_{p + 2, k} = c_{p, k} - c_{p +
  1, k - 1} + c_{p + 1, k} - c_{p + 2, k - 1}$. Since we already have $c_{p,
  k} = c_{p + 1, k - 1} = c_{p + 1, k} = 0$ we have $c_{p + 2, k - 1}
  \leqslant c_{p + 1, k} = 0$ so $c_{p + 2, k - 1} = 0$ as required. Now the
  same argument as before produces either $\Omega_{p + 2} (D_{p + 2}) S_{k + p
  - r + 2} = \Omega_{p + 2} (D_{p + 2})$, in which case we are done, or
  \[ \Omega_{p + 2} (D_{p + 2}) S_{k + p - r + 2} = S_{k + p - r + 2}
     \Omega_{p + 1} (D_{p + 1}) \]
  and the further equality $c_{p + 2, k} = 0$. Repeating this argument gives a
  succession of identities which together imply (\ref{firstindc}) and
  (\ref{firstomid}).
  
  Now let us prove (\ref{secondomid}). We recall that 
  $D_p (f_k v) = f_k D_p (v)$ differs from $D_p (v)$ in replacing $d_{p, k}$ by
  $d_{p, k} + 1$. This can change only the last factor in $\Omega'_p (D_p)$,
  and this only if $d_{p, k} = d_{p, k - 1} - 1$. Therefore we may assume that
  $c_{p, k - 1} = 1$ and $\Omega'_p (D_p (f_k v)) = \Omega' (D_p (v)) S_{k + p
    - r}$.  Therefore what we must prove is that
  \begin{equation}
  \label{secondpmid}
   S_{k + p - r} \Omega_{p - 1} (D_{p - 1}) \cdots \Omega_{r - k} (D_{r -
     k}) = \Omega_{p - 1} (D_{p - 1}) \cdots \Omega_{r - k} (D_{r - k}) .
  \end{equation}
  Thus consider $S_{k + p - r} \Omega_{p - 1} (D_{p - 1})$. We have $c_{p -
  1, k} < c_{p, k - 1} = 1$ by (\ref{zpkid}), and so $c_{p - 1, k} = 0$. This
  means that $\Omega_{p - 1} (D_{p - 1})$ has $S_{k + p - r}$ as a factor, and
  unless it also has $S_{k + p - r - 1}$ as a factor, we obtain $S_{k + p - r}
  \Omega_{p - 1} (D_{p - 1}) = \Omega_{p - 1} (D_{p - 1})$, which implies
  (\ref{secondomid}). Therefore we may assume that $\Omega_{p - 1} (D_{p -
  1})$ has $S_{k + p - r - 1}$ as a factor, which means that $c_{p - 1, k - 1}
  = 0$, which we now assume. Now we use $S_{k + p - r} \Omega_{p - 1} (D_{p -
  1}) = S_{k + p - r} \cdots S_{k + p - r - 1} S_{k + p - r} \cdots$ where the
  first ellipsis represents factors that commute with $S_{k + p - r}$ and the
  second ellipsis represents factors that commute with $S_{k + p - r - 1}$.
  Using the braid relation we obtain
  \[ S_{k + p - r} \Omega_{p - 1} (D_{p - 1}) = \Omega_{p - 1} (D_{p - 1})
     S_{k + p - r - 1} . \]
  We repeat the process. The next step is to prove that either
  \[ S_{k + p - r - 1} \Omega_{p - 2} (D_{p - 2}) = \Omega_{p - 2} (D_{p - 2})
     \qquad \text{or\qquad$\Omega_{p - 2} (D_{p - 2})$} S_{k + p - r - 2} . \]
  If $S_{k + p - r - 1} \Omega_{p - 2} (D_{p - 2}) = \Omega_{p - 2} (D_{p -
  2})$ then (\ref{secondomid}) follows and we may stop; otherwise we will
  prove the second identity together with the equation $c_{p-2,k}=c_{p-2,k-1}=0$
  that will be needed for subsequent steps. Since $i = p$ is the first value
  to maximize $\Phi_{i, k}$ we have, using (\ref{majdiff})
  \[ 0 < \Phi_{p, k} - \Phi_{p - 2, k} = \Phi_{p, k} - \Phi_{p-1, k} +
     \Phi_{p-1, k} - \Phi_{p - 2, k} = c_{p, k - 1} - c_{p - 1, k} + c_{p -
     1, k - 1} - c_{p - 2, k} . \]
  We already have $c_{p, k - 1} = 1$ while $c_{p - 1, k} = c_{p - 1, k - 1} =
  0$, so $c_{p - 2, k} = 0$. This means that $\Omega_{p - 2} (D_{p - 2})$ has
  a factor $S_{k + p - r - 1}$. Unless it also has a factor $S_{k + p - r -
  2}$ we have $S_{k + p - r - 1} \Omega_{p - 2} (D_{p - 2}) = \Omega_{p - 2}
  (D_{p - 2})$ and we are done. If it does have the factor $S_{k + p - r - 2}$
  then we have $c_{p - 2, k - 1} = 0$ and $S_{k + p - r - 1} \Omega_{p - 2}
  (D_{p - 2}) = \Omega_{p - 2} (D_{p - 2}) S_{k + p - r - 2}$ follows from the
  braid relation. Continuing this way, we obtain a sequence of
  identities $c_{p-a,k}=0$ and
  \[S_{k+p-r+1-a}\Omega_{p-a}(D_{p-a})=\Omega_{p-a}(D_{p-a})\qquad\text{or}\qquad \Omega_{p-a}(D_{p-a})S_{p+k-r-a}.\]
  If first alternative is true we may stop, since then
  (\ref{secondpmid}) is proved and we are done. Otherwise
  if the second equality is true we have also $c_{p-a,k-1}=0$, which
  is used to prove $c_{p-a-1,k}=0$ by an argument as above based
  on (\ref{majdiff}) and move to the next stage.
  Finally, with $c_{r-k,k}=0$, the last identity to be proved is 
  \[ S_1 \Omega_{r - k} (D_{r - k}) = \Omega_{r - k} (D_{r - k}), \]
  and this time there is no second alternative.
  This is true since then the first factor
  of $\Omega_{r-k}(D_{r-k})$ is $S_1$, and $S_1^2=S_1$. Now
  (\ref{secondpmid}) is proved, establishing~(\ref{secondomid}).
\end{proof}

\begin{lemma}
  \label{lem:B-inclusion}
  Let $w \in W$. Then
  \begin{equation}
    \label{daformstarineq}
    \mathcal{B}_{\infty} (w^{- 1}) \subseteq \{ v \in \mathcal{B}_{\infty} \mid w_0
    \omega (v^{\star}) \leqslant w \}
  \end{equation}
  and
  \begin{equation}
    \label{daformstarineqa}
    \mathcal{B}_{\infty} (w) \subseteq \{ v \in \mathcal{B}_{\infty} \mid w_0
  \omega (v) \leqslant w \} \, .
  \end{equation} 
\end{lemma}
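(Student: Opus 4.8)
The plan is to prove the first inclusion (\ref{daformstarineq}) by induction on $\ell(w)$, and then deduce the second inclusion (\ref{daformstarineqa}) from it by a one-line application of Kashiwara's $\star$-involution. The awkward feature to keep in mind throughout is that the Demazure recursion builds $\mathcal{B}_\infty$ by \emph{left} multiplication by simple reflections, whereas the quantity I must control, $\omega(v^\star)$, will be seen to change by \emph{right} multiplication; passing to $\omega^\dagger=\omega(\cdot^\star)^{-1}$ via Lemma~\ref{daggerlem} is exactly what reconciles these, and this is where the indexing by $w^{-1}$ earns its keep.

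For the base case $w=1$, I would note $\mathcal{B}_\infty(1)=\{u_0\}$; since $u_0$ is the unique element of weight $0$ and $\star$ preserves weight, $u_0^\star=u_0$. The string data $\bzl^{(e)}_{\mathbf{i}'}(u_0)$ is identically zero, so Circling Rule~\ref{circlingrule} circles every entry and $\omega(u_0)$ is the nondescending product of the full sequence $(s_1,s_2,s_1,s_3,\ldots)$, a reduced word for $w_0$; hence $\omega(u_0)=w_0$ and $w_0\omega(u_0^\star)=1\leqslant 1$. For the inductive step I would fix a reduced expression $w=s_{j_1}\cdots s_{j_\ell}$, set $i=j_\ell$ and $u=s_{j_{\ell-1}}\cdots s_{j_1}$, so that $u^{-1}=s_{j_1}\cdots s_{j_{\ell-1}}$ has length $\ell-1$, $w^{-1}=s_iu$ with $s_iu>u$, and $w=u^{-1}s_i$ with $u^{-1}s_i>u^{-1}$. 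By the recursive definition of Demazure crystals, any $v\in\mathcal{B}_\infty(w^{-1})=\mathcal{B}_\infty(s_iu)$ can be written $v=f_i^{k}v'$ with $v'=e_i^{k}v\in\mathcal{B}_\infty(u)$ for some $k\geqslant 0$, and the inductive hypothesis applied to $u^{-1}$ gives $w_0\omega(v'^\star)\leqslant u^{-1}$.

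Next I would transport Lemma~\ref{omstarrec} through Lemma~\ref{daggerlem}. Since $\omega^\dagger=\omega(\cdot^\star)^{-1}$, Lemma~\ref{omstarrec} reads $\omega^\dagger(f_iv')\in\{\omega^\dagger(v'),\,s_i\omega^\dagger(v')\}$; because this two-element set is stable under left multiplication by $s_i$, a short induction on $k$ yields $\omega^\dagger(v)=\omega^\dagger(f_i^{k}v')\in\{\omega^\dagger(v'),\,s_i\omega^\dagger(v')\}$. Taking inverses gives $\omega(v^\star)\in\{\omega(v'^\star),\,\omega(v'^\star)s_i\}$. Now I run the Bruhat-order bookkeeping. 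In the first case $w_0\omega(v^\star)=w_0\omega(v'^\star)\leqslant u^{-1}<u^{-1}s_i=w$. In the second case I set $x=w_0\omega(v'^\star)$ and $y=u^{-1}$, so $x\leqslant y$ and $ys_i>y$; the standard lifting property of the Bruhat order (which I would justify in two subcases using the subword characterization, according to whether $xs_i>x$ or $xs_i<x$) gives $xs_i\leqslant ys_i$, i.e.\ $w_0\omega(v^\star)=xs_i\leqslant u^{-1}s_i=w$. This closes the induction and establishes (\ref{daformstarineq}).

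Finally, (\ref{daformstarineqa}) would follow immediately: for $v\in\mathcal{B}_\infty(w)$ the $\star$-involution sends $v$ to $v^\star\in\mathcal{B}_\infty(w^{-1})$, so (\ref{daformstarineq}) applied to $v^\star$ gives $w_0\omega\big((v^\star)^\star\big)\leqslant w$, that is $w_0\omega(v)\leqslant w$ since $\star$ has order two. I expect the main obstacle to be the inductive step for (\ref{daformstarineq}): the genuine work is organizing the translation $f_i\leftrightarrow$ (left action of $s_i$ on $\omega^\dagger$) $\leftrightarrow$ (right action of $s_i$ on $\omega(\cdot^\star)$), verifying that iterating $f_i$ keeps $\omega^\dagger$ within the relevant $s_i$-stable pair, and then applying the lifting property with the correct monotonicity $ys_i>y$; the base case and the deduction of (\ref{daformstarineqa}) are routine by comparison.
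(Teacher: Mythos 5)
Your proof is correct and takes essentially the same route as the paper's: induction on $\ell(w)$ with base case $\omega(u_0)=w_0$, the two-element alternative from Lemma~\ref{omstarrec} transported through Lemma~\ref{daggerlem} and iterated along the $i$-string, the Bruhat lifting property to close the induction, and the $\star$-involution's Demazure property $\mathcal{B}_\infty(w)^\star=\mathcal{B}_\infty(w^{-1})$ to pass between the two inclusions. The only difference is bookkeeping: the paper runs the induction on the equivalent statement $\mathcal{B}_\infty(w)\subseteq\{v\in\mathcal{B}_\infty \mid \omega^\dagger(v)w_0\leqslant w\}$ so that simple reflections act on the left, whereas you keep $w_0\omega(v^\star)$ and handle the right-handed lifting property, which is equivalent by taking inverses.
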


\noindent
We will improve the inclusions in this Lemma later in Theorem~\ref{thm:binfdc} to equalities,
taking into account the additional information we have from Theorem~\ref{zdematoms}.

\begin{proof}[Proof of Lemma~\ref{lem:B-inclusion}]
  By \cite{KashiwaraDemazure} or \cite{BumpSchilling} Theorem~14.17,
  the $\star$-involution takes $\mathcal{B}_{\infty} (w^{- 1})$ to $\mathcal{B}_{\infty} (w)$.
  Thus (\ref{daformstarineq}) and (\ref{daformstarineqa}) are equivalent.
  Using Lemma~\ref{daggerlem} and the fact that the inverse map on $W$ preserves the Bruhat order,
  \eqref{daformstarineq} is also equivalent to
  \begin{equation}
    \label{eq:dagdemc} \mathcal{B}_{\infty} (w) \subseteq \{ v \in
    \mathcal{B}_{\infty} \mid \omega^{\dagger} (v) w_0 \leqslant w \} \, ,
  \end{equation}
  which we will now prove by induction on $\ell (w)$. If $w = 1$ then
  $\mathcal{B}_{\infty} (1) = \{ u_0 \}$, where $u_0$ is the highest weight
  vector in $\mathcal{B}_{\infty}$. For $v = u_0$ all the conditions $c_{i, r
  - 1 + j - 1} = 0$ are satsified in (\ref{omegaomg}) and it follows that
  $\omega^{\dagger} (u_0) = w_0$, so \eqref{eq:dagdemc} is satisfied in this
  case. Now assume that $\eqref{eq:dagdemc}$ is true for $w$; we
  show that if $s_i$ is a simple reflection and $s_i w > w$ then it is also true
  for $s_i w$. Now, by Theorem~\ref{demazurecrystals}~\ref{itm:crystals} for $\mathcal{B} = \mathcal{B}_\infty$, if $v \in
  \mathcal{B}_{\infty} (s_i w)$ then there is
  a $v_1 \in \mathcal{B}_{\infty} (w)$ such that $v$ and $v_1$ lie in the same
  root string. Note that Lemma~\ref{omstarrec} implies that if $v, v_1$ lie in
  the same $i$-string then either $\omega^\dagger (v_1) = \omega^\dagger (v)$ 
  or $\omega^\dagger (v_1) = s_i \omega^\dagger (v)$. Then $\omega^{\dagger} (v_1) w_0 \leqslant w$ by induction, and
  $\omega^{\dagger} (v) w_0 = \omega^{\dagger} (v_1) w_0$ or $s_i
  \omega^{\dagger} (v_1) w_0$; in either case $\omega^{\dagger} (v) w_0 \leqslant s_i w$.
\end{proof}

\section{\label{sec:dafpro}Proof of Theorem~\ref{daform}}
In this section we will prove Theorem~\ref{daform}
and its $\mathcal{B}_\infty$ analogue.

\begin{proof}[Proof of Theorem~\ref{daform}]
We consider the preimage in $\mathcal{B}_{\lambda}$ of both
sides of the identity in Lemma~\ref{lem:B-inclusion} under the map
$\psi_\lambda:\mathcal{B}_\lambda\to\mathcal{B}_\infty$ defined in
Section~\ref{sec:binf} and we obtain the inclusion of sets
\begin{equation}
  \label{oneinclusion} \mathcal{B}_{\lambda} (w) \subseteq \{ v \in
  \mathcal{B}_{\lambda}  \mid w_0 \omega (v) \leqslant w \} = \bigcup_{y \leqslant
  w} \{ v \in \mathcal{B}_{\lambda}  \mid w_0 \omega (v) = y \} .
\end{equation}
We claim that, in fact, these sets are equal, which would give us 
\eqref{eqatomic}. We caution the reader that the
Kashiwara involution $\star$ (which is not a crystal isomorphism)
does not preserve $\mathcal{B}_\lambda$ embedded in the crystal
via $\psi_\lambda$. What is true is that it maps $\mathcal{B}_\infty(w)$
into $\mathcal{B}_\infty(w^{-1})$, and the preimage of
$\mathcal{B}_\infty(w)$ in $\mathcal{B}_\lambda$ is
$\mathcal{B}_\lambda(w)$. That is all that is needed for (\ref{oneinclusion}).

Let $X$ and $Y$ be the two subsets of $\mathcal{B}_\lambda$ on the left- and
right-hand sides of (\ref{oneinclusion}). We have just shown that $X \subseteq
Y$. Now, on the one hand, we have from Theorem~\ref{demazurecrystals}~\ref{itm:character} that
\begin{equation}
  \label{eq:sumXweights}
  \sum_{\mathfrak{T} \in X} \mathbf{z}^{\wt(\mathfrak{T})} = \partial_w \mathbf{z}^\lambda \, .
\end{equation}

On the other hand, using the bijection between the crystal $\mathcal{B}_\lambda$ and the ensemble of states $\mathfrak{S}_{\mathbf{z}, \lambda}$ together with 
Theorem~\ref{crystalmt}, we have that
\begin{equation*}
  \sum_{\mathfrak{T} \in Y} \mathbf{z}^{\wt(\mathfrak{T})} := \sum_{y \leqslant w} \sum_{\substack{v \in \mathcal{B}_\lambda \\ w_0 \omega(v) = y}} \mathbf{z}^{\wt(v)} = \sum_{y \leqslant w} \sum_{s \in \mathfrak{S}_{\mathbf{z}, \lambda, y}} \mathbf{z}^{\wt(\mathfrak{T}(s)')} \, .
\end{equation*}

The Sch\"utzenberger involution satisfies the property that
$\wt(\mathfrak{T}') = w_0 \wt(\mathfrak{T})$. Using this, then \ref{itm:zweightid}
of Proposition~\ref{zschur} and then Theorem~\ref{zdematoms} we get that
\begin{equation*}
  \sum_{s \in \mathfrak{S}_{\mathbf{z}, \lambda, y}} \mathbf{z}^{\wt(\mathfrak{T}(s)')} = \sum_{s \in \mathfrak{S}_{\mathbf{z}, \lambda, y}} \mathbf{z}^{w_0 \wt(\mathfrak{T}(s))} = \mathbf{z}^{-\rho} Z(\mathfrak{S}_{\mathbf{z}, \lambda, y}) = \partial^\circ_y \mathbf{z}^\lambda \, . 
\end{equation*}
Finally by Theorem~\ref{thm:lskeys} and comparing with \eqref{eq:sumXweights}, it follows that
\begin{equation}
  \label{eq:keystep}
  \sum_{\mathfrak{T} \in Y} \mathbf{z}^{\wt(\mathfrak{T})} = \sum_{y \leqslant w}  \partial^\circ_y \mathbf{z}^\lambda = \partial_w \mathbf{z}^\lambda = \sum_{\mathfrak{T} \in X} \mathbf{z}^{\wt(\mathfrak{T})}. 
\end{equation}
Setting $\mathbf{z} = 1_T$ in the above equality shows that $X$ and $Y$ have the same
cardinality. Therefore $X = Y$.

The assertion that $w_0\omega(v)=w$ implies that $w$ is the longest element
of its coset in $W/W_\lambda$ is Corollary~\ref{adjacentcor}.
\end{proof}

Now that Theorem~\ref{daform} is proved, we have an analogous characterization of
Demazure crystals and Demazure atoms in $\mathcal{B}_\infty$.

\begin{theorem}
  \label{thm:binfdc}
  For any $w \in W$,
  \begin{equation}
    \label{binfdcid}
    \mathcal{B}_\infty(w)=\{v\in\mathcal{B}_\infty \mid \omega^\dagger(v)w_0\leqslant w\}=
    \{v\in\mathcal{B}_\infty \mid w_0\omega(v)\leqslant w\}\;.
  \end{equation}
  The map $\omega$ satisfies
  \begin{equation}
    \label{omegaid}
    w_0\omega(v)w_0=\omega^\dagger(v)=\omega(v^\star)^{-1}.
  \end{equation}
\end{theorem}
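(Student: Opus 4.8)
The plan is to upgrade the two inclusions of Lemma~\ref{lem:B-inclusion} to equalities by feeding in the now-proved Theorem~\ref{daform}, and then to read off the identities \eqref{omegaid} as a formal consequence. The key organizing observation is that the second equality in \eqref{omegaid}, namely $\omega^\dagger(v)=\omega(v^\star)^{-1}$, is exactly Lemma~\ref{daggerlem}, and that once the first equality $w_0\omega(v)w_0=\omega^\dagger(v)$ is known the two set-descriptions in \eqref{binfdcid} literally coincide, since then $\omega^\dagger(v)w_0=w_0\omega(v)$. So the genuine content is twofold: (i) the reverse inclusion in \eqref{daformstarineqa}, and (ii) the relation $w_0\omega(v)w_0=\omega^\dagger(v)$.

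For (i) I would argue by transferring the already-proved $\mathcal{B}_\lambda$-statement to $\mathcal{B}_\infty$. The proof of Theorem~\ref{daform} establishes the equality $\mathcal{B}_\lambda(w)=\{v\in\mathcal{B}_\lambda\mid w_0\omega(v)\leqslant w\}$. Given $v\in\mathcal{B}_\infty$ with $w_0\omega(v)\leqslant w$, I choose a dominant weight $\lambda$ deep enough in the dominant cone that $v$ lies in the image $\psi_\lambda(\mathcal{B}_\lambda)$, say $v=\psi_\lambda(\tilde v)$. Because $\psi_\lambda$ is a crystal morphism preserving the string data $\bzl^{(e)}_{\mathbf{i}'}$, we have $\omega(\tilde v)=\omega(\psi_\lambda(\tilde v))=\omega(v)$, so $w_0\omega(\tilde v)\leqslant w$ and hence $\tilde v\in\mathcal{B}_\lambda(w)$ by Theorem~\ref{daform}. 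Since $\mathcal{B}_\lambda(w)$ is precisely the $\psi_\lambda$-preimage of $\mathcal{B}_\infty(w)$ (Section~\ref{sec:binf}), this gives $v\in\mathcal{B}_\infty(w)$. Combined with the inclusion \eqref{daformstarineqa} of Lemma~\ref{lem:B-inclusion}, this proves the second equality in \eqref{binfdcid}.

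With the second equality in hand, the first follows from the chain of equivalences already recorded in the proof of Lemma~\ref{lem:B-inclusion}: the $\star$-involution carries $\mathcal{B}_\infty(w^{-1})$ onto $\mathcal{B}_\infty(w)$, and together with Lemma~\ref{daggerlem} and the order-preserving property of $x\mapsto x^{-1}$ this converts the equality $\mathcal{B}_\infty(w)=\{v\mid w_0\omega(v)\leqslant w\}$ into $\mathcal{B}_\infty(w)=\{v\mid\omega^\dagger(v)w_0\leqslant w\}$. Finally, for (ii), having both equalities for every $w\in W$ forces $\{v\mid w_0\omega(v)\leqslant w\}=\{v\mid\omega^\dagger(v)w_0\leqslant w\}$ for all $w$. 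Fixing $v$ and evaluating this set-equality at $w=w_0\omega(v)$ and then at $w=\omega^\dagger(v)w_0$ yields $\omega^\dagger(v)w_0\leqslant w_0\omega(v)$ and the reverse, hence $w_0\omega(v)=\omega^\dagger(v)w_0$, i.e. $w_0\omega(v)w_0=\omega^\dagger(v)$. Together with Lemma~\ref{daggerlem} this is exactly \eqref{omegaid}.

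I expect the main obstacle to be the transfer step of paragraph two: one must justify that every $v\in\mathcal{B}_\infty$ appears in $\psi_\lambda(\mathcal{B}_\lambda)$ for $\lambda$ sufficiently dominant, and that $\omega$ is compatible with $\psi_\lambda$. Both are standard features of the embedding $\mathcal{B}_\lambda\hookrightarrow\mathcal{T}_\lambda\otimes\mathcal{B}_\infty$ (the image exhausts any fixed weight range as $\langle\lambda,\alpha_i^\vee\rangle\to\infty$, and $\psi_\lambda$ preserves the Littelmann string data), but they are the only places where genuine crystal-theoretic input beyond Bruhat-order bookkeeping is needed; everything else is the formal manipulation of inclusions and the elementary fact that $a\leqslant w\iff b\leqslant w$ for all $w$ implies $a=b$.
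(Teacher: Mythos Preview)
Your proposal is correct and follows essentially the same route as the paper: transfer the equality of Theorem~\ref{daform} from $\mathcal{B}_\lambda$ to $\mathcal{B}_\infty$ via the exhaustion of $\mathcal{B}_\infty$ by the images $\psi_\lambda(\mathcal{B}_\lambda)$, then use the $\star$-involution and Lemma~\ref{daggerlem} to pass between the $\omega$ and $\omega^\dagger$ descriptions and to deduce~\eqref{omegaid}. Your write-up is in fact more explicit than the paper's terse proof, particularly in spelling out the Bruhat-order argument that extracts $w_0\omega(v)=\omega^\dagger(v)w_0$ from the equality of the two set-descriptions for all~$w$.
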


\begin{proof}
  The identities
  \[\mathcal{B}_\lambda(w)=\{v\in\mathcal{B}_\lambda \mid \omega^\dagger(v)w_0\leqslant w\}=
  \{v\in\mathcal{B}_\lambda \mid w_0\omega(v)\leqslant w\}\]
  have been proved for the finite crystal $\mathcal{B}_\lambda$,
  and since the images of $\psi_\lambda$ exhaust
  $\mathcal{B}_\infty$, (\ref{binfdcid}) follows. The identity
  (\ref{omegaid}) follows using Lemma~\ref{daggerlem}.
\end{proof}

Now the Demazure atoms in $\mathcal{B}_\infty$ may be defined as
\begin{equation}
\label{binfatoms}
\mathcal{B}^\circ_\infty(w)=\{v\in\mathcal{B}_\infty \mid \omega^\dagger(v)w_0=w\}=
    \{v\in\mathcal{B}_\infty \mid w_0\omega(v)=w\}\;.
\end{equation}

\begin{corollary}
  \label{cor:binfda}
  The subsets $\mathcal{B}^\circ_\infty(w)$ are a family of crystal Demazure atoms
  for $\mathcal{B}_\infty$.
\end{corollary}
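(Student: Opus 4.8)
The plan is to deduce this directly from Theorem~\ref{thm:binfdc}, which supplies all the substantive content; the corollary is essentially a repackaging of that result into the language of crystal Demazure atoms. Recall from the discussion preceding this corollary that a family of crystal Demazure atoms for $\mathcal{B}_\infty$ is, by definition, a collection of disjoint subsets $\mathcal{B}^\circ(w)$ ($w\in W$) satisfying the decomposition (\ref{eqatomic}), namely $\mathcal{B}_\infty(w)=\bigcup_{y\leqslant w}\mathcal{B}^\circ(y)$. So for the specific sets $\mathcal{B}^\circ_\infty(w)$ defined in (\ref{binfatoms}) there are exactly two things to verify: that they are pairwise disjoint, and that they satisfy (\ref{eqatomic}).

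First I would check disjointness. This is immediate from the definition (\ref{binfatoms}): since $\omega:\mathcal{B}_\infty\to W$ is a single-valued function, the element $w_0\omega(v)$ of $W$ is well-defined for each $v\in\mathcal{B}_\infty$. Hence $v$ lies in $\mathcal{B}^\circ_\infty(w)=\{v \mid w_0\omega(v)=w\}$ for exactly one value of $w$, so the sets $\mathcal{B}^\circ_\infty(w)$ are the level sets of the map $v\mapsto w_0\omega(v)$ and are in particular pairwise disjoint (some being empty, which is harmless).

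Next I would establish (\ref{eqatomic}). Taking the union of these level sets over the Bruhat interval below $w$,
\[
\bigcup_{y\leqslant w}\mathcal{B}^\circ_\infty(y)
=\bigcup_{y\leqslant w}\{v\in\mathcal{B}_\infty \mid w_0\omega(v)=y\}
=\{v\in\mathcal{B}_\infty \mid w_0\omega(v)\leqslant w\},
\]
and the right-hand side is precisely $\mathcal{B}_\infty(w)$ by the second equality in (\ref{binfdcid}) of Theorem~\ref{thm:binfdc}. This yields (\ref{eqatomic}) and completes the verification.

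I do not anticipate any genuine obstacle: all the real work—the identity $\mathcal{B}_\infty(w)=\{v\mid w_0\omega(v)\leqslant w\}$—was already secured in Theorem~\ref{thm:binfdc}, whose proof in turn rested on the Yang--Baxter input of Theorem~\ref{zdematoms} and the $\star$-involution machinery of Lemmas~\ref{daggerlem}--\ref{lem:B-inclusion}. The only point meriting care is purely definitional, namely that a union of level sets of $w_0\omega$ over a down-closed set of indices equals the corresponding sublevel set; and one may remark that, by Lemma~\ref{lem:uniqueatomic}, once existence is shown this is automatically the unique family of crystal Demazure atoms for $\mathcal{B}_\infty$.
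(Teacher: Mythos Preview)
Your proposal is correct and follows exactly the same approach as the paper's own proof: disjointness is immediate since the $\mathcal{B}^\circ_\infty(w)$ are level sets of the map $v\mapsto w_0\omega(v)$, and the decomposition (\ref{eqatomic}) follows directly from Theorem~\ref{thm:binfdc}. The paper's proof is simply a two-line version of what you wrote.
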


\begin{proof}
These are obviously a family of disjoint subsets of $\mathcal{B}_\infty$
and by Theorem~\ref{thm:binfdc} they satisfy the characterizing
identity~(\ref{eqatomic}).
\end{proof}

\section{\label{sec:algo}Proof of the algorithms for computing Lascoux-Sch\"utzenberger keys}

We now prove the algorithms from Subsection~\ref{subsec:alg}. For the first algorithm, given any tableau $T \in \mathcal{B}_{\lambda}$, we compute $\omega(T')$ by means of the
definition (\ref{ompdef}). Thus we consider
$\bzl_{\mathbf{i}'}^{(e)}(T')=\bzl_{\mathbf{i}}^{(f)}(T)=(b_1,b_2,\cdots)$, 
where the Gelfand-Tsetlin pattern of $T$ is the array $(a_{ij})$ and
the $b_i$ are given by the formula (\ref{bzlfromgtp}). Then
\[\begin{array}{lcl}
b_1=0&\quad\iff\quad& a_{r,r}=a_{r-1,r},\\
b_2=b_3&\quad\iff\quad& a_{r-1,r-1}=a_{r-2,r-1},\\
b_3=0&\quad\iff\quad&a_{r-1,r}=a_{r-2,r},\\
&\vdots&\end{array}\]
and so forth. This means that the circled entries in (\ref{gamcirc}) are
the same as in (\ref{circledreflections}). Therefore the first algorithm
follows from Theorem~\ref{daform}.

We may now prove Algorithm~2. The idea is to deduce it from Algorithm~1 (which
is already proved) for the crystal $\mathcal{B}_{- w_0 \lambda}$. Now $- w_0
\lambda = (- \lambda_r, \cdots, - \lambda_1)$ is not a partition (since its
entries may be negative) but it is a dominant weight. Fortunately the facts
that we need, particularly the map to Gelfand-Tsetlin patterns and Algorithm~1,
may be extended to crystals $\mathcal{B}_{\lambda}$ where $\lambda$ is a
dominant weight by the following considerations.

If $\lambda = (\lambda_1, \cdots, \lambda_r)$ a dominant weight (that is,
$\lambda_1 \geqslant \cdots \geqslant \lambda_r$ but the entries may be
negative) then for sufficiently large $N$, $\lambda + N^r = (\lambda_1 + N,
\cdots, \lambda_r + N)$ is a partition and $\mathcal{B}_{\lambda + (N^r)}$ is
a crystal of tableaux. To put this into context, $\mathcal{B}_{\lambda}$ is
the crystal of the representation $\pi_{\lambda}$ of $\GL(r)$ with
highest weight $\lambda$, and $\mathcal{B}_{\lambda + (N^r)}$ is the crystal
of $\det^N \otimes \pi_{\lambda}$. The crystal graph of $\mathcal{B}_{\lambda
+ (N^r)}$ is isomorphic to that of $\mathcal{B}_{\lambda}$ and we may transfer
results such as Theorem~\ref{daform} from $\mathcal{B}_{\lambda + (N^r)}$ to
$\mathcal{B}_{\lambda}$.

In particular let $\mathfrak{P}_{\lambda}$ be the space of Gelfand-Tsetlin
patterns with top row $\lambda$. Let $\Gamma : \mathcal{B}_{\lambda}
\longrightarrow \mathfrak{P}_{\lambda}$ be the map defined in the introduction
for $\lambda$ a partition. If $\lambda$ is a dominant weight, then $\Gamma :
\mathcal{B}_{\lambda} \longrightarrow \mathfrak{P}_{\lambda}$ may be similarly
defined; for if $v \in \mathcal{B}_{\lambda}$ and $T$ is the corresponding
element of $\mathcal{B}_{\lambda + (N^r)}$, then $\Gamma (T)$ is defined and
we define $\Gamma (v)$ to be the Gelfand-Tsetlin pattern ottained from $\Gamma
(T)$ by subtracting $N$ from every element of $\Gamma (T)$. The map $\omega :
\mathcal{B}_{\lambda} \longrightarrow W$ is also defined and Algorithm~1 is
valid.

Now there are maps $\alpha_1, \alpha_2 : \mathfrak{P}_{\lambda}
\longrightarrow W$ corresponding to Algorithm~1 and Algorithm~2 of the
introduction. Thus if $a = (a_{i j})$ is a Gelfand-Tsetlin pattern, then for
each $(i, j)$ with $a_{i, j} = a_{i - 1, j}$ we circle the corresponding entry
in (\ref{gamcirc}) and $\alpha_1 (a)$ will be the nondecreasing product of the
circled reflection in order from bottom to top, right to left; and similarly
to compute $\alpha_2 (a)$ we circle the entries of (\ref{delcirc}) when $a_{i,
j} = a_{i - 1, j - 1}$ and take the nondecreasing product in order from bottom
to top, left to right.

There is an operation $- \text{rev}$ on Gelfand-Tsetlin patterns that maps
$\mathfrak{P}_{\lambda}$ to $\mathfrak{P}_{- w_0 \lambda}$ that negates the
entries in a pattern $a$ and mirror-reflects them from left to right, so if $r
= 3$
\[ - \text{rev} \left( \begin{array}{ccccc}
     \lambda_1 &  & \lambda_2 &  & \lambda_3\\
     & a &  & b & \\
     &  & c &  & 
   \end{array} \right) = \left( \begin{array}{ccccc}
     - \lambda_3 &  & - \lambda_2 &  & - \lambda_1\\
     & - b &  & - a & \\
     &  & - c &  & 
   \end{array} \right) . \]

As further discussed in \cite{wmd5book}, there is a map $\phi_{\lambda} : \mathcal{B}_{\lambda} \longrightarrow
\mathcal{B}_{- w_0 \lambda}$ that maps the highest weight element to the
highest weight element and has the effect that $\phi_{\lambda} (e_i v) =
e_{i'} \phi_{\lambda} (v)$, where we recall that $i' = r - i$.

\begin{proposition}
  For all $T\in\mathcal{B}_\lambda$
  \begin{equation}
    \label{omphic} \omega (\phi_{\lambda} (T)) = w_0 \omega (T) w_0^{- 1} .
  \end{equation}
\end{proposition}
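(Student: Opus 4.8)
The plan is to show that $\phi_\lambda$ carries the entire Demazure-crystal and Demazure-atom structure of $\mathcal{B}_\lambda$ to that of $\mathcal{B}_{-w_0\lambda}$, with the Weyl-group index transformed by conjugation by $w_0$; the formula for $\omega$ then follows by reading off which atom contains $\phi_\lambda(T)$ via Theorem~\ref{daform}. The starting observation is that the relabeling $i\mapsto i'=r-i$ appearing in $\phi_\lambda(e_iv)=e_{i'}\phi_\lambda(v)$ is exactly conjugation by $w_0$, since $w_0s_iw_0^{-1}=s_{i'}$, and that this conjugation is a length-preserving automorphism of $W$ carrying $W_\lambda$ onto $W_{-w_0\lambda}$ and preserving the Bruhat order. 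Note that $-w_0\lambda$ is a dominant weight, and as discussed in this section (via the $\det^N$ twist) Theorem~\ref{daform}, Corollary~\ref{adjacentcor} and the map $\omega$ are all available for $\mathcal{B}_{-w_0\lambda}$.

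First I would prove the transport law $\phi_\lambda(\mathcal{B}_\lambda(w))=\mathcal{B}_{-w_0\lambda}(w_0ww_0^{-1})$ by induction on $\ell(w)$. The base case $w=1$ holds because $\phi_\lambda$ sends the highest weight vector of $\mathcal{B}_\lambda$ to that of $\mathcal{B}_{-w_0\lambda}$, so both sides equal $\{v_{-w_0\lambda}\}$. For the inductive step with $s_iw>w$, the recursive description in Theorem~\ref{demazurecrystals}~\ref{itm:crystals} characterizes $\mathcal{B}_\lambda(s_iw)$ as the set of $x$ with $e_i^kx\in\mathcal{B}_\lambda(w)$ for some $k$; applying the bijection $\phi_\lambda$ and the intertwining $\phi_\lambda(e_ix)=e_{i'}\phi_\lambda(x)$ turns this into the same description of $\mathcal{B}_{-w_0\lambda}(s_{i'}w_0ww_0^{-1})$, and $s_{i'}w_0ww_0^{-1}=w_0(s_iw)w_0^{-1}$, with $\ell(s_iw)=\ell(w_0(s_iw)w_0^{-1})$ so that the hypothesis $s_iw>w$ is preserved. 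I would then transfer this to atoms: applying the linear, invertible map $\phi_\lambda$ to the identity $\mathcal{B}_\lambda(w)=\sum_{y\leqslant w}\mathcal{B}^\circ_\lambda(y)$ of \eqref{eqatomic}, reindexing the sum by $y\mapsto w_0yw_0^{-1}$ (which preserves $\leqslant$), and invoking the uniqueness in Lemma~\ref{lem:uniqueatomic}, I obtain $\phi_\lambda(\mathcal{B}^\circ_\lambda(w))=\mathcal{B}^\circ_{-w_0\lambda}(w_0ww_0^{-1})$.

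With the transport law in hand, the conclusion is bookkeeping. By Theorem~\ref{daform}, $T$ lies in $\mathcal{B}^\circ_\lambda(w_0\omega(T))$, and by Corollary~\ref{adjacentcor} the element $w_0\omega(T)$ is the distinguished representative of its coset in $W/W_\lambda$ (namely the one making $\omega(T)$ longest). Applying the transport law gives $\phi_\lambda(T)\in\mathcal{B}^\circ_{-w_0\lambda}\bigl(w_0(w_0\omega(T))w_0^{-1}\bigr)=\mathcal{B}^\circ_{-w_0\lambda}(\omega(T)w_0^{-1})$. On the other hand Theorem~\ref{daform} applied in $\mathcal{B}_{-w_0\lambda}$ places $\phi_\lambda(T)$ in $\mathcal{B}^\circ_{-w_0\lambda}(w_0\omega(\phi_\lambda(T)))$, with $w_0\omega(\phi_\lambda(T))$ again the distinguished representative. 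Because conjugation by $w_0$ is length-preserving and coset-preserving, it carries the distinguished representative of $w_0\omega(T)$'s coset to that of its image coset, so both $\omega(T)w_0^{-1}$ and $w_0\omega(\phi_\lambda(T))$ are the distinguished representative of the single coset indexing the atom of $\phi_\lambda(T)$; hence $w_0\omega(\phi_\lambda(T))=\omega(T)w_0^{-1}$, which rearranges to $\omega(\phi_\lambda(T))=w_0\omega(T)w_0^{-1}$. The one genuinely delicate point is the transport law for Demazure crystals — pinning the index map down to conjugation by $w_0$ and checking it respects length and Bruhat order — together with the attendant matching of coset representatives; the remaining crystal-theoretic input is routine.
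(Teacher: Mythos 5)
Your proof is correct and takes essentially the same route as the paper: transport the Demazure crystals $\mathcal{B}_\lambda(w)$ through $\phi_\lambda$ with index conjugated by $w_0$, deduce the same transport for the atoms from the uniqueness of the atomic decomposition, and then read off $\omega$ from atom membership via Theorem~\ref{daform} and Corollary~\ref{adjacentcor}. The only difference is that you supply the details (the induction on $\ell(w)$, the uniqueness argument of Lemma~\ref{lem:uniqueatomic}, and the matching of distinguished coset representatives under conjugation) that the paper compresses into ``it is clear that $\phi_\lambda\mathcal{B}_\lambda(w)=\mathcal{B}_{-w_0\lambda}(w_0ww_0^{-1})$.''
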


\begin{proof}
  Note that $w \mapsto w_0 w w_0^{- 1}$ is the automorphism of $W$ that sends
  the simple reflection $s_i$ to $s_{i'}$. So by the definition of the
  Demazure crystals it is clear that $\phi_{\lambda} \mathcal{B}_{\lambda} (w)
  =\mathcal{B}_{- w_0 \lambda} (w_0 w w_0^{- 1}) .$ Hence $\phi_{\lambda}
  (\mathcal{B}_{\lambda}^{\circ} (w)) =\mathcal{B}_{- w_0 \lambda}^{\circ}
  (w_0 w w_0^{- 1})$. By Theorem~\ref{daform}, we may characterize $\omega
  (T)$ as the shortest Weyl group element such that $T \in
  \mathcal{B}_{\lambda}^{\circ} (w_0 \omega (T))$. Equation (\ref{omphic})
  follows.
\end{proof}

The map $\phi_{\lambda}$ intertwines the Sch{\"u}tzenberger-Lusztig
involutions $v \mapsto v'$ on $\mathcal{B}_{\lambda}$ and $\mathcal{B}_{- w_0
\lambda}$. We will denote $\phi'_{\lambda} (v) = \phi_{\lambda} (v') =
\phi_{\lambda} (v)'$. Let $\tau : W \longrightarrow W$ be conjugation by $w_0$.
We have a commutative diagram
\[
\begin{tikzcd}[column sep=huge]
  \mathcal{B}_\lambda \arrow{r}{\phi'_\lambda} \arrow{d}{\Gamma} & \mathcal{B}_{-w_0\lambda} \arrow{d}{\Gamma} \\
  \mathfrak{P}_\lambda \arrow{r}{-\text{rev}} \arrow{d}{\alpha_2} & \mathfrak{P}_{-w_0\lambda} \arrow{d}{\alpha_1} \\
  W \arrow{r}{\tau} & W
\end{tikzcd}
\]
Indeed, the top square commutes by (2.12) of {\cite{wmd5book}}, which is proved
there using the description of the Sch\"utzenberger involution on Gelfand-Tsetlin
patterns in~{\cite{BerensteinKirillov}}. The commutativity of the bottom square is clear from the
definitions of $\alpha_1$ and $\alpha_2$, bearing in mind that
$w_0 s_i w_0^{-1} = s_{i'}$ when circling \eqref{gamcirc} and \eqref{delcirc}.

We may now prove the second algorithm. If $T \in \mathcal{B}_{\lambda}$, the
commutative diagram shows that
\[ w_0 \alpha_2 (\Gamma (T)) w_0^{- 1} = \alpha_1 (\Gamma (\phi_{\lambda}
   (T)')) = \omega (\phi_{\lambda} (T)) = w_0 \omega (T) w_0^{- 1} \]
where the second step is by applying Algorithm~1 to $\phi_{\lambda} (T)' \in
\mathcal{B}_{- w_0 \lambda}$ and the last step is by (\ref{omphic}). Therefore
$\omega (T) = \alpha_2 (\Gamma (T))$, which is Algorithm~2.

\bibliographystyle{hplain} 
\bibliography{demice}

\end{document}